\DeclareMathOperator*{\argmax}{argmax}
\DeclareMathOperator*{\argmin}{argmin}
\newcommand{\compilehidecomments}{false}
    \newcommand{\wei}[1]{}
    \newcommand{\haoyu}[1]{}
    \newcommand{\weizhong}[1]{}
    \newcommand{\wei}[1]{{\color{blue!50!black}  [\text{Wei:} #1]}}
    \newcommand{\weizhong}[1]{{\color{red!60!black} [\text{Weizhong:} #1]}}
    \newcommand{\haoyu}[1]{{\color{brown!60!black} [\text{Haoyu:} #1]}}
\newcommand{\compilefullversion}{true}
	\newcommand{\OnlyInFull}[1]{}
	\newcommand{\OnlyInShort}[1]{#1}
	\newcommand{\OnlyInFull}[1]{#1}%
	\newcommand{\OnlyInShort}[1]{}%
\newtheorem{lemma}{Lemma}
\newtheorem{corollary}{Corollary}
\newtheorem{proposition}{Proposition}
\newtheorem{definition}{Definition}
\newcommand{\E}{\mathbb{E}}
\newcommand{\R}{\mathbb{R}}
\newcommand{\be}{\boldsymbol{e}}
\newcommand{\bu}{\boldsymbol{u}}
\newcommand{\bv}{\boldsymbol{v}}
\newcommand{\bx}{\boldsymbol{x}}
\newcommand{\by}{\boldsymbol{y}}
\newcommand{\bz}{\boldsymbol{z}}
\newcommand{\cA}{\mathcal{A}}
\newcommand{\cB}{\mathcal{B}}
\newcommand{\cD}{\mathcal{D}}
\newcommand{\cN}{\mathcal{N}}
\newcommand{\cR}{\mathcal{R}}
\newcommand{\cP}{{\mathcal{P}}}
\newcommand{\LB}{{\it LB}}
\mathchardef\mhyphen="2D
\newcommand{\GradRIS}{{\sf Grad\mhyphen RIS}}
\newcommand{\ProxGradRIS}{{\sf ProxGrad\mhyphen RIS}}
\newcommand{\ProxGradRISHEU}{{\sf ProxGrad\mhyphen RISHEU}}
\newcommand{\ProxGradOrg}{{\sf ProxGrad\mhyphen Org}}
\newcommand{\ProxGradOrgHEU}{{\sf ProxGrad\mhyphen OrgHEU}}
\newcommand{\UpperGradRIS}{{\sf UpperGrad\mhyphen RIS}}
\newcommand{\UpperGradRISHEU}{{\sf UpperGrad\mhyphen RISHEU}}
\newcommand{\GreedyRIS}{{\sf Greedy\mhyphen RIS}}
\newcommand{\Sampling}{{\sf Sampling}}
\algrenewcommand\algorithmicrequire{\textbf{Input:}}
\algrenewcommand\algorithmicensure{\textbf{Output:}}
\title{Gradient Method for Continuous Influence Maximization with 
    Budget-Saving Considerations}
\author{Wei Chen\\
Microsoft Research\\
\texttt{weic@microsoft.com}
\and 
Weizhong Zhang\\
IIIS, Tsinghua University\\
\texttt{zwz15@mails.tsinghua.edu.cn}
\and
Haoyu Zhao\\
IIIS, Tsinghua University\\
\texttt{zhaohy16@mails.tsinghua.edu.cn}
}
\date{}
\begin{document}

\maketitle

\sloppy


\begin{abstract}
Continuous influence maximization (CIM) generalizes the original influence maximization by
	incorporating general marketing strategies: a marketing strategy mix is a vector 
	$\bx = (x_1,\dots,x_d)$ such that for each node $v$ in a social network, 
	$v$ could be activated as a seed of diffusion with probability $h_v(\bx)$, 
	where $h_v$ is a strategy activation function satisfying DR-submodularity.
CIM is the task of selecting a strategy mix $\bx$ with constraint $\sum_i x_i \le k$ where
	$k$ is a budget constraint, such that the total number of activated nodes after the
	diffusion process, called influence spread and denoted as $g(\bx)$, is maximized.
In this paper, we extend CIM to consider budget saving, that is, each strategy mix $\bx$ has 
	a cost $c(\bx)$ where $c$ is a convex cost function, and we want to maximize the balanced sum
	$g(\bx) + \lambda(k - c(\bx))$ where $\lambda$ is a balance parameter, subject to
	the constraint of $c(\bx) \le k$.
We denote this problem as CIM-BS.
The objective function of CIM-BS is neither monotone, nor DR-submodular or concave, and thus
	neither the greedy algorithm nor the standard result on gradient method could be directly
	applied.
Our key innovation is the combination of the gradient method with reverse influence sampling
	to design algorithms that solve CIM-BS: 
	 For the general case, we give an algorithm that achieves $\left(\frac{1}{2}-\varepsilon\right)$-approximation, and for the case of independent strategy activations, we present an algorithm that achieves $\left(1-\frac{1}{e}-\varepsilon\right)$ approximation.
\end{abstract}

\section{Introduction}
Influence maximization is the task of selecting a small number of seed nodes
	in a social network such that the influence spread from the seeds when following
	an influence diffusion model is maximized.
It models the viral marketing scenario and has been extensively studied
	(cf. \cite{kempe03journal,chen2013information,LiFWT18}).
Continuous influence maximization (CIM) generalizes the original influence maximization by
incorporating general marketing strategies: a marketing strategy mix is a vector 
$\bx = (x_1,\dots,x_d)$ such that for each node $v$ in a social network, 
$v$ could be activated as a seed of diffusion with probability $h_v(\bx)$, 
where $h_v$ is a strategy activation function satisfying monotonicity and DR-submodularity.
CIM is the task of selecting a strategy mix $\bx$ with constraint $\sum_i x_i \le k$ where
$k$ is a budget constraint, such that the total number of activated nodes after the
diffusion process, called influence spread and denoted as $g(\bx)$, is maximized.
CIM is proposed in~\cite{kempe03journal}, and recently followed up by
	a few studies~\cite{YangMPH16,WYC18}.
	
In this paper, we extend CIM to consider budget saving: 
	each strategy mix $\bx$ has 
	a cost $c(\bx)$ where $c$ is a convex cost function, and
	we want to maximize the balanced sum
	$g(\bx) + \lambda(k - c(\bx))$ where $\lambda$ is a balance parameter, subject to
	the constraint of $c(\bx) \le k$.
We denote this problem as CIM-BS.
The objective reflects the realistic consideration of balancing between increasing
	influence spread and saving marketing budget.
In general we have $g(\bx)$ monotone (increasing) 
	and DR-submodular (diminishing return property
	formally defined in Section~\ref{sec:model}), but $\lambda(k - c(\bx))$ is
	concave and likely to be monotonically decreasing, and thus
	the objective function $g(\bx) + \lambda(k - c(\bx))$ is neither monotone, nor DR-submodular or concave, and thus
neither the greedy algorithm nor the standard result on gradient method could be directly
applied for a theoretical guarantee.

In this paper, we apply the gradient method 
	\cite{nesterov2013introductory,parikh2014proximal}
	to solve CIM-BS with theoretical approximation
	guarantees.
This is the only case we know of that the gradient method is applied to influence maximization
	with a theoretical guarantee while the greedy method cannot.
The gradient method may be applied to the original objective function
	$g(\bx) + \lambda(k - c(\bx))$, but $g(\bx)$ is a complicated 
	combinatorial function and its exact gradient is infeasible to compute.
We could use stochastic gradient instead, but it results in large variance and very
	slow convergence.
Instead, we integrate the gradient method with the reverse influence sampling (RIS)
{	approach~\cite{BorgsBrautbarChayesLucier,tang14,tang15}, which is the main 
	technical innovation in our paper.
RIS is proposed for improving the efficiency of the influence maximization task,
	but when integrating with the gradient method, it brings two additional benefits:
	(a) it allows the efficient computation of the exact gradient of an estimator
	function of $g(\bx)$, which avoids slow convergence caused by the large variance
	in the stochastic gradient,
	and (b) for a class of independent strategy activation functions $h_v$ where
		each strategy dimension independently attempts to activate node $v$,
		the new objective function is in the form of coverage functions
	\cite{KarimiLH017}, which allows a tight concave upper bound function and leads to
	a better approximation ratio.

For the general case, we apply the proximal gradient method originally designed for
	concave functions to work with RIS and achieve an approximation of
	$\left(\frac{1}{2}-\varepsilon\right)$ (Theorem~\ref{thm-proxgrad-ris}).
This requires an adaptation of the proximal gradient method for the functions of 
	the form $f_1(\bx) + f_2(\bx)$ where $f_1$ is non-negative, monotone and DR-submodular
	and $f_2$ is non-negative and concave, and the result of this adaption
	(Theorem~\ref{thm-proximal-grad}) may be of independent interest.
For the independent strategy activation case, we apply the projected subgradient
	method on a tight concave upper bound of the objective function and
	achieve a $\left(1-\frac{1}{e}-\varepsilon\right)$ approximation (Theorem~\ref{thm-uppergrad-ris}).
We test our algorithms on a real-world dataset and validate its effectiveness comparing
	with other algorithms.

In summary, our contributions include:
	(a) we propose the study of CIM-BS problem to balance influence spread with budget
	saving; and
	(b) we integrate the gradient method with reverse influence sampling and provide
	two algorithms with theoretical approximation guarantees, on the objective function
	that is neither monotone, nor DR-submodular or concave.
Our study is one of the first studies that introduce the gradient method to influence
	maximization, and hopefully it will enrich the scope of the influence maximization 
	research.
	
\OnlyInFull{
For clarity, the detailed proofs and full experiment
results are moved to the appendix.
}
\OnlyInShort{
Due to the space constraint, detailed proofs and full experiment
	results are moved to the full technical report~\cite{CZZ19}.
}

\noindent{\bf Related works.\ \ }
Influence maximization was first proposed by Kempe et al. \cite{kempe03journal} as a discrete
	optimization problem, and has been extensively studied since 
	(cf. \cite{chen2013information,LiFWT18}).
CIM is also proposed in \cite{kempe03journal}.
Yang et al.~\cite{YangMPH16} propose heuristic algorithms to solve CIM more efficiently,
while Wu et al.~\cite{WYC18} consider discrete version of CIM and apply RIS to solve
	it efficiently.
Profit maximization in \cite{LL12,TTY16} introduces linear cost with no budget constraint
	to influence maximization.
Our CIM-BS problem is new and more general than both CIM and profit maximization
	studied before.
The RIS approach is originally proposed in~\cite{BorgsBrautbarChayesLucier}, and
	is further improved in~\cite{tang14,tang15,NguyenTD16}.
	
Recently, a number of studies have applied gradient methods to DR-submodular maximization:
	Bian et al.~\cite{BianMB017} apply the Frank-Wolfe algorithm to achieve 
	$1-1/e$ approximation for down-closed sets;
	Hassani et al.~\cite{HassaniSK17} apply stochastic projected gradient descent
		to achieve $1/2$ approximation;
	Karimi et al.~\cite{karimi2017stochastic} achieve
	$1-1/e$ approximation for coverage functions, which we adopt for the
	independent strategy activation case;
	Mokhtari et al.~\cite{DBLP:conf/aistats/MokhtariHK18} apply more complicated
	conditional gradient method to achieve $1-1/e$ approximation.
Our study is not a simple adoption of such methods to CIM-BS, because our
	objective function is not DR-submodular, and gradient computation cannot be treated
	as an oracle ---- we have to provide exact gradient computation and 
	an end-to-end integration with the RIS approach.

\section{Preliminary and Model} \label{sec:model}
In this paper, we focus on the triggering model for influence maximization problem. We use a directed graph $G=(V,E)$ to represent a social network, where $V$ is the set of nodes 
	representing individuals, and $E$ is the set of directed edges with edge $(u,v)$ representing that $u$ could directly influence $v$.
Let $n=|V|$ and $m=|E|$.
In the diffusion process, 
	each node is either active or inactive, and a node will stay active if it is activated. 
In the triggering model, every node $v\in V$ has a distribution $D_v$ on the
	subsets of $v$'s in-neighbors $N^-(v) = \{u|(u,v)\in E\}$. 
Before the diffusion starts, each node $v\in V$ samples a triggering set $T_v \subseteq N^{-}(v)$ from the distribution $D_v$, denoted $T_v\sim D_v$. 
At time $t=0$, the nodes in a pre-determined {\em seed set} $S$ are activated. 
For any time $t = 1, 2, \ldots$, the node $v$ is activated if at least one of nodes in its triggering set $T_v$ is activated at time $t-1$. 
The whole propagation stops when no new node is activated in a step.
An important quantity is the {\em influence spread} of the seed set $S$, denoted as $\sigma(S)$, 
	which is defined as the expected number of the final activated nodes with seed set $S$. 
The classical {\em influence maximization} 
	problem is to maximize $\sigma(S)$ such that $|S| \le k$
	for some given budget $k$.

A generalization of the classical influence maximization problem is the 
	{\em continuous influence maximization (CIM)} problem with general marketing strategies~\cite{kempe03journal}. 
A mix of marketing strategies is represented by a $d$-dimensional vector 
	$\bx =(x_1,x_2,\ldots, x_d) \in\R^d_{+}$, where $\R_{+}$ is the set of non-negative real numbers. 
In the general case, we consider strategy mix $\bx$ in a general convex set 
	$\cD \subseteq \R^d_{+}$, but most commonly we consider $\cD = \R^d_{+}$ or
	$\cD$ has an upper bound in each dimension, e.g. $\cD = [0,1]^d$.
Given the strategy $\bx$, each node $v\in V$ is independently activated as a seed with 
	probability $h_v(\bx)$, 
	where $h_v:\R^d_{+} \rightarrow [0,1]$ is referred to as a 
	{\em strategy activation function}.
Once a set of seeds $S$ is activated by a marketing strategy mix $\bx$, 
	the influence propagates from seeds in $S$ following the triggering model.
Then we define the influence spread of strategy mix $\bx$, $g(\bx)$, as the expected number of nodes
	activated by $\bx$, and formally,
\begin{align}
    &g(\bx) = \E_S[\sigma(S)]\nonumber\\
    =& \sum_{S\subseteq V}\sigma(S)\cdot \prod_{u\in S}h_u(\bx)\cdot \prod_{v\notin S}(1-h_v(\bx)).
\end{align}
The above formula means that we enumerate through all possible seed sets $S$, and due to independent seed activation by $\bx$
	the probability of $S$ being the seed set is $\prod_{u\in S}h_u(\bx)\cdot \prod_{v\notin S}(1-h_v(\bx))$ and its
	influence spread is $\sigma(S)$.

In many situations, each strategy dimension in $\bx$ activates each node independently.
That is, for each node $v$ and each strategy $j\in [d]$, there is a function $q_{v,j}$
	such that strategy $j$ with amount $x_j$ 
	activates node $v$ with probability $q_{v,j}(x_j)$. 
Then we have $h_v(\bx) = 1 - \prod_{j\in [d]} (1-q_{v,j}(x_j))$.
We call this case {\em independent strategy activation}.
Independent strategy activation models many scenarios
such as personalized marketing and event marketing~\cite{WYC18}.

In this paper, we focus on an extension of the continuous influence maximization problem --- 
	{\em continuous influence maximization with budget saving (CIM-BS)}. 
We have a total budge $k$, and for every strategy mix $\bx$, there is a cost $c(\bx)$. 
We do not want the cost to exceed the budget, and we want to maximize the budget balanced
	influence spread: a combination of the expected influence spread and the remaining budget. More formally, we have the following definition.

\begin{definition}[Continuous Influence Maximization with Budget Saving] 
The {\em continuous influence maximization with budget saving (CIM-BS)} is the problem of
	given (a) a social network $G=(V,E)$ and the triggering model $\{D_v\}_{v\in V}$ on $G$, 
	(b) strategy activation functions $\{h_v\}_{v\in V}$, 
	(c) cost function $c$, total budget $k$, and a balance parameter $\lambda \ge 0$, 
	finding a strategy mix $\bx^* \in \R^d_{+}$ to maximize its balanced sum of influence spread
	and budget savings, 
	i.e., find $\bx^*$ such that
    $\bx^* \in \argmax_{\bx\in \cD, c(\bx) \le k} \left(g(\bx)+\lambda(k-c(\bx))\right)$.
\end{definition}
Note that when $\lambda =0$ and $c(\bx)=\sum_{i\in [d]} x_i$, 
	the CIM-BS problem falls back to the CIM problem defined in~\cite{kempe03journal}, and also appears
	in~\cite{YangMPH16,WYC18}.
When $c(\bx)$ is a linear function and the budget constraint $c(\bx)\le k$ is dropped, the problem
	resembles the profit maximization studied in~\cite{LL12,TTY16}.
However, the general version of the problem as defined here with $\lambda > 0$, a general
	cost function $c(\bx)$ and constraint $c(\bx) \le k$ together is new.
Henceforth, let $s(\bx) = \lambda(k-c(\bx))$.
Intuitively, $s(\bx)$ is the budget-saving part of the objective. 
The overall objective of $g(\bx) + s(\bx)$ is trying to find the balance between maximizing
	influence and saving budget.
We call $g(\bx) + s(\bx)$ the {\em budget-balanced influence spread}.
For convenience, we denote $\cP = \{\bx\mid \bx\in \cD, c(\bx)\le k \}$.
	
We say that a vector function $f:\cD \rightarrow \R$ is DR-submodular if
	for any $\bx,\by \in \cD$ with $\bx \le \by$ (coordinate-wise), for any unit vector $\be_i$ with the $i$-th dimension $1$ and
	all other dimensions $0$, and for any $\delta > 0$, we have 
	$f(\bx + \delta\cdot \be_i) - b(\bx)\ge f(\by + \delta\cdot \be_i) - f(\by)$.
DR-submodularity characterizes the diminishing marginal return on function $f$ as vector $\bx$ increases, hence the name.
We also say that $f$ is monotone if for any $\bx,\by \in \cD$ with $\bx \le \by$, $f(\bx) \le f(\by)$; 
	$f$ is convex if for any $\bx,\by \in \cD$, any $\lambda \in [0,1]$, 
	$f(\lambda \bx + (1-\lambda) \by) \le \lambda f(\bx) + (1-\lambda) f(\by)$;
	$f$ is $L$-Lipschitz if for any $\bx,\by \in \cD$, $|f(\bx) - f(\by)| \le L \cdot ||\bx - \by||_2$, where $||\cdot ||_2$ is the vector 2-norm;
	$f$ is $\beta$-smooth if it has gradients everywhere and for any $\bx,\by \in \cD$,
	$||\nabla f(\bx)-\nabla f(\by)||_2 \le \beta ||\bx-\by||_2$.
Note that when the gradients exist, the $L$-Lipschitz condition is equivalent to
	$||\nabla f(\bx)||_2 \le L$ for all $\bx\in \cD$.
%
%
	
In this paper, we assume that the strategy activation function $h_v$ is monotone and DR-submodular, which implies that the influence spread function $g$ is also monotone
and DR-submodular, same as assumed
	in~\cite{kempe03journal,WYC18}.
It is reasonable in that, with more marketing effort, the probability of seed activation would increase (monotonicity) but 
	the marginal effect may be decreasing.
For the case of independent strategy activation ($h_v(\bx) = 1 - \prod_{j\in [d]} (1-q_{v,j}(x_j))$), we assume $q_{v,j}$ is 
	non-decreasing and concave, which implies that $h_v$ is monotone and DR-submodular~\cite{WYC18}.
In term of the cost function $c$, we assume that $c$ is convex and $L_c$-Lipschitz.
The most common function is the simple summation
(or 1-norm of $\bx$): $c(\bx) = \sum_{i\in [d]} x_i$, but 
	more general convex functions are also common in the economics literature
	(e.g. \cite{mankiw2014principles}), for example 
	$c(x) = ||\bx||_2$.

An important remark is now in order.
When $h_v$'s are monotone and DR-submodular and $c$ is convex,
	$g$ is monotone and DR-submodular and $s$ is concave, and as a result
	$g+s$ may be neither monotone nor DR-submodular.
This means the greedy hill-climbing algorithm of \cite{kempe03journal,WYC18} 
	no longer has theoretical approximation guarantee for the CIM-BS problem.
This motivates us to apply the gradient method to solve CIM-BS.

\section{Gradient Method with Reverse Influence Sampling}
\label{sec:gradientAlgo}

Gradient method has been applied to many continuous optimization problems.
For our CIM-BS problem, a natural option is to apply the gradient method directly
	on the objective function $g+s$.
However, the influence spread function $g$ is a complicated combinatorial function, such
	that its gradient $\nabla g$ is too complex to compute 
	in practice.
We could apply stochastic gradient on $g$ (see \OnlyInFull{Appendix~\ref{app:original}}\OnlyInShort{Appendix D in
	\cite{CZZ19}})
	but it has very large variance due to the significant amount
	of randomness from both strategies activating seeds and influence
	propagation from seeds, which leads to very slow convergence of the method.
Instead, in this section, we propose a novel integration of the gradient method with
	the reverse influence sampling (RIS) 
	approach~\cite{BorgsBrautbarChayesLucier} 
	for CIM-BS. 
The key insight is that RIS allows the efficient computation of the exact gradient
	of an alternative objective function $\hat{g}_{\cR}+s$ while maintaining an
	approximation guarantee of $1/2-\varepsilon$.
Moreover, when independent strategy activation is satisfied by the model, the
	alternative objective enables a tight concave upper bound, which leads to
	a $1-1/e-\varepsilon$ approximation.
	
In Section~\ref{sec-properties-rrset}, we first 
	review existing results on RIS with the continuous domain.
Then in Sections~\ref{sec:algoFramework} and~\ref{sec-gradient-method-algorithm}, 
	we present the gradient method, its integration with RIS, and its theoretical analysis,
	which are our main technical contribution.

\subsection{Properties of the Reverse Reachable Sets}\label{sec-properties-rrset}

The central concept in the RIS approach is the {\em reverse reachable set}, as defined below.

\begin{definition}[Reverse Reachable Set]
	Under the triggering model,
a {\em reverse reachable (RR) set} with a root node $v$, denoted $R_v$, is the
random set of nodes that $v$ reaches in one reverse propagation: sample all
triggering sets $T_u,u\in V$, such that edges $\{(w,u)|u\in V,w\in T_u\}$
together with nodes $V$ form a live-edge graph, and $R_v$ is the set of
nodes that can reach $v$ (or $v$ can reach reversely) in this live-edge
graph. An RR set $R$ without specifying a root is one with root $v$ selected
uniformly at random from $V$ .
\end{definition}

An RR set $R_v$  includes nodes that would activate $v$
	in one sample propagation.
Then, the key insight is that for a collection of RR sets, if some node $u$ appears in
	many of these RR sets, it means $u$ is likely to activate many nodes, and thus has
	high influence.
Technically, RR sets connect with the influence spread of a seed set $S$ with the
	following equation~\cite{BorgsBrautbarChayesLucier,tang15}:
	$\sigma(S) = \E_R[\Pr\{S \cap R \ne \emptyset \}]$.
For CIM-BS, we have the following connection as given in~\cite{WYC18}:
\begin{lemma}[\cite{WYC18}]
	For any strategy $\bx\in\cP$, we have
	$g(\bx)         = n\cdot \E_{R}\left[1-\prod_{u\in R}(1-h_u(\bx))\right]$.
\end{lemma}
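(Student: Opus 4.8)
The plan is to start from the explicit definition of $g(\bx)$ as an expectation over the random seed set, substitute the standard RIS identity expressing $\sigma(S)$ through RR sets, interchange the two independent sources of randomness, and then collapse the resulting inner expression by a direct factorization. Concretely, I first view the seed set $S$ as the random set in which each node $u \in V$ is included independently with probability $h_u(\bx)$, so that $g(\bx) = \E_S[\sigma(S)]$ matches the weighted sum over $S$ written in Section~\ref{sec:model}. I then substitute the RR-set connection $\sigma(S) = n \cdot \E_R[\I\{S \cap R \neq \emptyset\}]$ cited just above (the factor $n$ reflecting the uniformly random choice of root $v$). Since the RR set $R$ and the seed activation of $S$ are drawn independently, I swap the order of the two expectations (Fubini), obtaining $g(\bx) = n \cdot \E_R[\Pr_S\{S \cap R \neq \emptyset\}]$.

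The heart of the argument is evaluating the inner probability for a fixed $R$. Writing $\Pr_S\{S \cap R \neq \emptyset\} = 1 - \Pr_S\{S \cap R = \emptyset\}$, the event $S \cap R = \emptyset$ says that none of the nodes of $R$ is activated as a seed; because each node $u$ is activated independently with probability $h_u(\bx)$, this avoidance probability factors as $\prod_{u \in R}(1 - h_u(\bx))$. Substituting back yields $g(\bx) = n \cdot \E_R[1 - \prod_{u \in R}(1 - h_u(\bx))]$, which is exactly the claim.

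I expect the only delicate points, rather than genuine obstacles, to be (i) making explicit that the seed-activation randomness and the RR-set randomness are independent, so that the interchange of expectations is justified, and (ii) the factorization of the avoidance probability, which hinges precisely on the independent seed activation of distinct nodes; both are routine once stated carefully. If one prefers to avoid the probabilistic shorthand and keep everything as finite sums over $S \subseteq V$, the same steps go through by linearity (the sum is finite since $V$ is finite) together with the elementary identity $\sum_{S \subseteq V \setminus R} \prod_{u \in S} h_u(\bx) \prod_{v \notin S}(1 - h_v(\bx)) = \prod_{u \in R}(1 - h_u(\bx))$, which again expresses that the total probability mass on seed sets disjoint from $R$ equals the product of the per-node non-activation probabilities over $R$.
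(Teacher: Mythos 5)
Your proof is correct and follows exactly the route the paper itself indicates: the paper does not prove this lemma (it cites it from~\cite{WYC18}) and only sketches the intuition that a fixed RR set $R$ is hit by the random seed set with probability $1-\prod_{u\in R}(1-h_u(\bx))$, which is precisely the factorization you carry out after swapping the two independent expectations. Your write-up is just a careful, fully justified version of that same argument (and it correctly includes the factor $n$ in the identity $\sigma(S)=n\cdot\E_R[\I\{S\cap R\neq\emptyset\}]$, which the paper's inline statement omits).
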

Intuitively, the above lemma means that a node $u\in R$ would activate $R$'s root if $u$ itself is activated, which happens with probability $h_u(x)$, and
	thus strategy mix successfully activate $R$'s root with probability 
	$1-\prod_{u\in R}(1-h_u(\bx))$.
We can generate $\theta$ independent RR-sets $\cR = \{R_1,\dots,R_{\theta}\}$, and take the average among them as defined below:
\begin{equation} \label{eq:hatg}
\hat g_{\cR}(\bx) = \frac{n}{\theta}\sum_{R\in\cR}\left(1-\prod_{v\in R}(1-h_v(\bx))\right).
\end{equation}
We can see that $\hat{g}_{\cR}(\bx)$ is an unbiased estimator of $g(\bx)$. If $\theta$ is large enough, then $\hat{g}_{\cR}(\bx)$ should be close to $g(\bx)$ at every $\bx\in\cP$.
We also have the following lemma from \cite{WYC18}.
\begin{lemma}[\cite{WYC18}]
	\label{lem-drsubmodular}
    If $h_v$ is monotone and DR-submodular for all $v\in V$, then 
    $g$ and $\hat g_{\cR}$ are also monotone and DR-submodular.
\end{lemma}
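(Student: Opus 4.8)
The plan is to reduce both statements to a single structural fact and then prove that fact by induction. Observe that $\hat g_{\cR}(\bx)=\frac{n}{\theta}\sum_{R\in\cR} f_R(\bx)$ and $g(\bx)=n\cdot\E_{R}[f_R(\bx)]$, where $f_R(\bx)=1-\prod_{u\in R}(1-h_u(\bx))$. The defining inequalities for monotonicity ($f(\bx)\le f(\by)$) and for DR-submodularity ($f(\bx+\delta\be_i)-f(\bx)\ge f(\by+\delta\be_i)-f(\by)$) are linear in $f$ with non-negative coefficients, so they are preserved under multiplication by $n/\theta$ (resp.\ $n$), under finite summation, and under taking expectations (a limit of non-negative combinations); I would state this closure property explicitly. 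Hence it suffices to prove the claim: if $h_u:\cD\to[0,1]$ is monotone and DR-submodular for every $u\in R$, then $f_R=1-\prod_{u\in R}(1-h_u)$ is monotone and DR-submodular.

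I would prove this claim by induction on $|R|$, with the inductive step being a two-function \emph{probabilistic OR} composition lemma. For $[0,1]$-valued $a,b$ write $a\vee b:=1-(1-a)(1-b)=a+b-ab$; then $f_R$ is obtained by repeatedly applying $\vee$ to the $h_u$, and since $a\vee b$ is again $[0,1]$-valued the induction goes through once I show that $a\vee b$ is monotone and DR-submodular whenever $a$ and $b$ are. Monotonicity is immediate: $1-(a\vee b)=(1-a)(1-b)$ is a product of non-negative non-increasing functions, hence non-increasing, so $a\vee b$ is non-decreasing.

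The core step is DR-submodularity of $a\vee b$, which I expect to be the main obstacle. Fix a coordinate $i$, a step $\delta>0$, and abbreviate the (non-negative) marginals $M_a(\bz)=a(\bz+\delta\be_i)-a(\bz)$ and $M_b(\bz)=b(\bz+\delta\be_i)-b(\bz)$. Expanding $a\vee b=a+b-ab$ gives the marginal identity
\[
(a\vee b)(\bz+\delta\be_i)-(a\vee b)(\bz)=M_a(\bz)\bigl(1-b(\bz+\delta\be_i)\bigr)+M_b(\bz)\bigl(1-a(\bz)\bigr).
\]
Now take $\bx\le\by$. DR-submodularity of $a$ and $b$ gives $M_a(\bx)\ge M_a(\by)\ge 0$ and $M_b(\bx)\ge M_b(\by)\ge 0$, while monotonicity (with $\bx+\delta\be_i\le\by+\delta\be_i$) gives $1-b(\bx+\delta\be_i)\ge 1-b(\by+\delta\be_i)\ge 0$ and $1-a(\bx)\ge 1-a(\by)\ge 0$. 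Since $p\ge p'\ge 0$ and $q\ge q'\ge 0$ imply $pq\ge p'q'$, each product on the right-hand side of the identity is at least as large at $\bx$ as at $\by$; summing the two inequalities yields exactly the DR-submodularity inequality for $a\vee b$. This closes the induction and hence the claim.

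Two remarks on where care is needed. First, the reduction in the first paragraph leans entirely on the closure of the two defining inequalities under non-negative averaging, so this should be argued rather than taken for granted. Second, an alternative route, if one assumes each $h_u$ twice differentiable, is to verify $\partial_i\partial_j f_R\le 0$ for all $i,j$ directly: writing $P=\prod_u(1-h_u)$ and $L=\log P$ one gets $\partial_i\partial_j P=P(\partial_iL\,\partial_jL+\partial_i\partial_jL)$, and the diagonal ($u=w$) contributions cancel, leaving a manifestly non-negative expression, whence $\partial_i\partial_j f_R=-\partial_i\partial_j P\le 0$. I prefer the inductive argument because it requires no smoothness or strict positivity and handles the boundary values $h_u\in\{0,1\}$ without extra work.
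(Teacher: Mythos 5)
Your proposal is correct. Note first that the paper itself supplies no proof of this lemma: it is imported verbatim from the cited reference [WYC18], so there is no in-paper argument to compare against. Your derivation is a valid, self-contained replacement. The reduction is sound: both $\hat g_{\cR}$ and $g$ (the latter via the RR-set representation established in the preceding lemma) are non-negative scalings, finite sums, or expectations of the functions $f_R(\bx)=1-\prod_{u\in R}(1-h_u(\bx))$, and the defining inequalities for monotonicity and DR-submodularity are indeed pointwise linear inequalities in the function, hence closed under these operations. The core composition step also checks out: the marginal identity
\[
(a\vee b)(\bz+\delta\be_i)-(a\vee b)(\bz)=M_a(\bz)\bigl(1-b(\bz+\delta\be_i)\bigr)+M_b(\bz)\bigl(1-a(\bz)\bigr)
\]
is an exact algebraic consequence of $a\vee b=a+b-ab$, and the monotone-comparison argument (each factor is non-negative and pointwise larger at $\bx$ than at $\by$, so each product is) correctly yields the DR inequality; the induction is well-founded because $a\vee b$ stays $[0,1]$-valued. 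The one place to be explicit, which you already flag, is the closure of the two defining inequalities under expectation over the RR-set distribution for $g$ — this is immediate by linearity but should be stated. Your preference for the inductive argument over the Hessian computation is well taken, since the lemma as stated assumes no differentiability of the $h_v$.
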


\subsection{Algorithmic Framework Integrating Gradient Method with RIS}
\label{sec:algoFramework}

We first introduce the general algorithmic framework that integrates any gradient
	algorithm with the RIS approach.
We assume a generic gradient algorithm $\cA$ that takes a set of RR sets
	$\cR = \{R_1,\dots,R_{\theta}\}$, an objective function 
	$\hat g_{\cR}+s$, and an additive error $\varepsilon$ as input,
	and return a solution $\hat \bx$ that guarantees
$(\hat g_{\cR}+s)(\hat \bx) \ge \alpha\cdot \max_{\by\in\cP}(\hat g_{\cR}+s)(\by) - \varepsilon$,
in time $T = \text{poly}(\frac{1}{\varepsilon})$, where $\alpha$ is some
	constant approximation ratio.  
We call such an $\cA$ an {\em $(\alpha,\epsilon)$-approximate gradient algorithm}.

\begin{algorithm}[!t]
	\caption{$\GradRIS$: Gradient-RIS Meta-Algorithm for CIM-BS.}
	\label{alg-meta}
	\begin{algorithmic}[1]
		\Require Directed graph $G$, triggering model $\{D_v\}_{v\in V}$, domain $\cP$,
		strategy activation functions $\{h_v\}_{v\in V}$, cost function $c$, budget $k$, 
		balance parameter $\lambda$,
		Lipschitz constants $L_1,L_2$ for the functions $g+s$ and $\hat g_{\cR}+s$, approximation parameter $\varepsilon$, confidence parameter $\ell$, gradient algorithm $\cA$ with  the approximation guarantee $\alpha$
		\Ensure A strategy mix $\bx$
		\State $\cR,\LB \leftarrow \Sampling(\text{all parameters received})$
		\State $\bx\leftarrow \cA(\cR, \hat g_{\cR}+s, \varepsilon \LB)$
		/* $\hat g_{\cR}$ defined in Eq.~\eqref{eq:hatg},
		$s(\bx) = \lambda(k - c(\bx))$ */
		\State\Return $\bx$  
	\end{algorithmic}
\end{algorithm}
\begin{algorithm}[!t]
	\caption{$\Sampling$ Procedure.}
	\label{alg-sampling}
	\begin{algorithmic}[1]
		\Require Same as in Algorithm~\ref{alg-meta}
		\Ensure The RR-sets $\cR$ and an estimated lower bound $\LB$
		\State $\LB \leftarrow  1,\cR_0 \leftarrow \phi,\theta_0 = 0,\varepsilon'\leftarrow\sqrt{2}\varepsilon/3$
		\For{$i=1,2,\dots,\lfloor\log_2 (n+\lambda k)-1\rfloor$}
		\State $x_i \leftarrow (n+\lambda k)/2^i$
		\State \label{alg:thetai}
		$\theta_i \leftarrow \left\lceil n\left(2+\frac{2}{3}\varepsilon'\right)\cdot  \right.$
		\State $\quad\quad \left. \frac{\ln\cN(\cP,\frac{\varepsilon/3}{L_2} x_i)+\ell\ln n+\ln 2 + \ln\log_2{(n+\lambda k)}}{\varepsilon'^2 x_i} \right\rceil$
		\State Generate $\theta_i-\theta_{i-1}$ independent RR-sets $\cR'$ \label{line:generateRRset1}
		\State $\cR_i\leftarrow \cR'\cup\cR_{i-1}$
		\State \label{alg:callGradient}
		$\by_i \leftarrow \cA(\cR_i, \hat g_{\cR_i}+s, \varepsilon x_i/3)$
		\If{$(\hat g_{\cR_i} + s)(\by_i) \ge (1+\varepsilon'+\varepsilon/3)\cdot x_i$}
		\State $\LB \leftarrow \frac{(\hat g_{\cR_i} + s)(\by_i)}{1+\varepsilon'+\varepsilon/3}$; $\theta \leftarrow \theta_i$
		\State \textbf{break}
		\EndIf
		\EndFor
		\State 
		$\theta^{(1)} = \frac{8n\cdot \ln\left(4n^\ell\right)}{\LB\cdot (\alpha-\varepsilon/3)^2\varepsilon^2/9}$
		\State 
		$\theta^{(2)} = \frac{2\alpha'\cdot n\cdot\ln\left(4n^\ell\cN(\cP,\frac{\varepsilon/3}{L_1+L_2}\LB)\right)}{(\varepsilon/3 - \frac{1}{4}(\alpha-\varepsilon/3)^2 \varepsilon/3)^2\LB}$ \label{alg:theta12} 
		\State $\tilde\theta \leftarrow \max\{\theta^{(1)},\theta^{(2)}\}$.
		\State Generate $\tilde\theta$ independent RR-sets $R_1,\dots,R_{\tilde\theta}$, $\cR\leftarrow\{R_1,\dots,R_{\tilde\theta}\}$ 
		\State\Return($\cR,\LB$)
	\end{algorithmic}
\end{algorithm}

Algorithm~\ref{alg-meta} gives the meta-algorithm.
It first calls the $\Sampling$ procedure to sample enough RR sets $\cR$, together with
	an estimated lower bound $\LB$ of the optimal solution.
Then it calls the gradient algorithm $\cA$ with $\cR$, using $\hat g_{\cR}+s$ as
	the objective function and $\varepsilon \LB$ as the additive error.

The $\Sampling$ procedure is to sample enough RR sets for the theoretical guarantee.
We adapt the sampling procedure of the IMM algorithm~\cite{tang15}, as shown
	in Algorithm~\ref{alg-sampling}.
We use the IMM sampling procedure mainly because of its clarity in analysis and theoretical
	guarantee, while other sampling procedures (e.g. ~\cite{NguyenTD16}) could be
	adapted too.
The main structure of the sampling procedure is the same as in IMM, where we repeatedly
	halving the guess $x_i$ of the lower bound $\LB$ of the optimal budget-balanced influence
	spread to find a good lower bound estimate, and then use $\LB$ to estimate the final
	number of RR sets needed 
	and regenerate these RR sets (the regeneration is the workaround 1 proposed
	in~\cite{Chen18} to fix a bug in the original IMM).
There are two important differences worth to mention.
First, in line~\ref{alg:callGradient}, we call the gradient algorithm $\cA$ to find
	an approximate solution $\by_i$, which replaces the original greedy algorithm in IMM.
Second, and more importantly, the original IMM algorithm works on a finite solution
	space --- at most $\binom{n}{k}$ feasible seed sets of size $k$, 
	and $\binom{n}{k}$ is used to bound the number of RR sets needed.
However, in CIM-BS, we are working
	on an infinite solution space, and thus we cannot directly have such a bound.
To tackle this problem, we utilize the concept of $\varepsilon$-net and covering number
	to turn the infinite solution space into a finite space:
%
%
%
\begin{definition}[$\varepsilon$-Net and Covering Number]
    A finite set $N$ is called an $\varepsilon$-net for $\cP$ if for every $\bx\in\cP$, there exists $\pi(\bx)\in N$ such that $||\bx - \pi(\bx)||_2\le\varepsilon$. The smallest cardinality of an $\varepsilon$-net for $\cP$ is called the covering number:
    $\cN(\cP,\varepsilon) = \inf\{|N|:N \text{ is an $\varepsilon$-net of $\cP$}\}$.
\end{definition}

As a concrete example, suppose we have 1-norm or 2-norm cost function
	$c(\bx) = ||\bx||_1 \text{or } ||\bx||_2$.
With budget $k$, we know that $\cP$ is bounded by the ball $\mathbb B_1(k)$
	or $\mathbb B_2(k)$ with radius $k$.
Then, As shown in \cite{van2014probability}, 
	the covering number satisfies
	$\cN(\cP,\varepsilon) \le \cN(\mathbb B_1(k),\varepsilon) \le \cN(\mathbb B_2(k),\varepsilon) \le \left(3k/\varepsilon\right)^d$. 

Besides the $\varepsilon$-net, we also need to have the upper bounds 
	$L_1$ and $L_2$ on the Lipschitz constants of functions $g+s$ and $\hat g_{\cR}+s$. 
We defer the discussion on $L_1$ and $L_2$ to the next subsection.
Covering number and $L_1, L_2$ together are used to 
	bound the number of RR sets needed, as used in
	lines~\ref{alg:thetai} and~\ref{alg:theta12} of Algorithm~\ref{alg-sampling}.
We denote Algorithms~\ref{alg-meta} and~\ref{alg-sampling} together as
	$\GradRIS$, and we show that $\GradRIS$ achieves the following
	approximation guarantee:
\begin{restatable}{theorem}{thmcombining}\label{thm-combining-gradient-rrset}
	For any $\varepsilon, \ell, \alpha > 0$, 
	for any $(\alpha,\varepsilon/3)$-approximate 
	gradient algorithm $\cA$ for $\hat{g}_\cR+s$, 
	with probability at least $1-\frac{1}{n^\ell}$, $\GradRIS$ outputs a solution $\bx$ that is an $(\alpha-\varepsilon)$-approximation of the optimal solution $\text{OPT}_{g+s}$ of CIM-BS, i.e.
    $(g+s)(\bx) \ge (\alpha-\varepsilon)\text{OPT}_{g+s}$.
\end{restatable}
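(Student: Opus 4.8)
The plan is to follow the template of the IMM concentration analysis, but with three modifications dictated by our setting: the finite count $\binom{n}{k}$ of feasible seed sets is replaced by the covering number $\cN(\cP,\cdot)$, the greedy guarantee $1-1/e$ is replaced by the abstract ratio $\alpha$ of $\cA$, and the deterministic budget-saving term $s$ is carried through every inequality. The last point is convenient: $\hat g_\cR+s$ and $g+s$ differ only in their first summand, so $s$ never contributes randomness and drops out of every concentration statement, $(\hat g_\cR+s)(\bx)-(g+s)(\bx)=\hat g_\cR(\bx)-g(\bx)$.

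First I would prove a two-sided concentration lemma for $\hat g_\cR$ around $g$. For a single fixed $\bx$, writing $\hat g_\cR(\bx)=\frac{n}{\theta}\sum_{R\in\cR}Z_R$ with i.i.d.\ $Z_R=1-\prod_{v\in R}(1-h_v(\bx))\in[0,1]$ and $\E[Z_R]=g(\bx)/n$, a multiplicative Chernoff bound gives $|\hat g_\cR(\bx)-g(\bx)|\le\delta\,\text{OPT}_{g+s}$ with failure probability exponentially small in $\theta\,\text{OPT}_{g+s}/n$; this is the bound calibrated by $\theta^{(1)}$, whose $\ln(4n^\ell)$ factor reflects a union over a constant number of failure events. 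To upgrade this to a uniform statement over the infinite domain $\cP$, I would fix an $\eta$-net $N$ with $\eta=\frac{\varepsilon/3}{L_1+L_2}\LB$, apply the single-point bound together with a union bound over $N$ (this is exactly where the factor $\cN(\cP,\eta)$ enters $\theta^{(2)}$), and then transfer control from each net point $\pi(\bx)$ to an arbitrary $\bx$ through the Lipschitz bounds: since $g+s$ is $L_1$-Lipschitz and $\hat g_\cR+s$ is $L_2$-Lipschitz, their difference $\hat g_\cR-g$ is $(L_1+L_2)$-Lipschitz, so $|(\hat g_\cR-g)(\bx)|\le|(\hat g_\cR-g)(\pi(\bx))|+(L_1+L_2)\eta\le|(\hat g_\cR-g)(\pi(\bx))|+\tfrac{\varepsilon}{3}\LB$ by the choice of $\eta$.

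Next I would analyze the $\Sampling$ loop to certify that $\LB$ faithfully estimates $\text{OPT}_{g+s}$. The counts $\theta_i$ are calibrated so that at each guess $x_i$ the estimate $(\hat g_{\cR_i}+s)(\by_i)$ concentrates well enough that the break condition fires at a value $x_i$ comparable to $\text{OPT}_{g+s}$; when it fires, the definition $\LB=(\hat g_{\cR_i}+s)(\by_i)/(1+\varepsilon'+\varepsilon/3)$ together with the upper concentration bound $(\hat g_{\cR_i}+s)(\by_i)\le(g+s)(\by_i)+\text{err}\le\text{OPT}_{g+s}+\text{err}$ yields $\LB\le\text{OPT}_{g+s}$, while the lower concentration bound prevents the loop from stopping too early, giving $\text{OPT}_{g+s}=O(\LB)$. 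All failure probabilities are absorbed into the $n^{-\ell}$ budget by a final union bound, and by following workaround~1 of \cite{Chen18} I regenerate the final $\tilde\theta=\max\{\theta^{(1)},\theta^{(2)}\}$ RR sets so that they are independent of the $\by_i$'s, which keeps the Chernoff bounds applicable in the last step. The chaining then proceeds as follows: the returned $\hat\bx$ satisfies $(\hat g_\cR+s)(\hat\bx)\ge\alpha\max_{\by}(\hat g_\cR+s)(\by)-\varepsilon\LB\ge\alpha(\hat g_\cR+s)(\bx^*)-\varepsilon\LB$ for the true optimum $\bx^*$; the single-point bound at $\bx^*$ gives $(\hat g_\cR+s)(\bx^*)\ge(1-\tfrac{\varepsilon}{3})\text{OPT}_{g+s}$, the uniform bound at $\hat\bx$ gives $(g+s)(\hat\bx)\ge(\hat g_\cR+s)(\hat\bx)-\tfrac{\varepsilon}{3}\text{OPT}_{g+s}$, and substituting with $\LB\le\text{OPT}_{g+s}$ lets the three $\varepsilon/3$-sized losses combine with $\alpha$ into $(g+s)(\hat\bx)\ge(\alpha-\varepsilon)\text{OPT}_{g+s}$.

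I expect the main obstacle to be the bookkeeping in the $\Sampling$ phase: proving simultaneously that the break condition is neither triggered prematurely (which would make $\LB$ too large and violate $\LB\le\text{OPT}_{g+s}$) nor missed (which would push $\LB$ far below $\text{OPT}_{g+s}$ and blow up $\theta^{(1)}$ and $\theta^{(2)}$), all while each guess $x_i$ reuses a growing, correlated pool of RR sets. Pinning down the constants so that the separate $\varepsilon/3$ error contributions telescope exactly to $\alpha-\varepsilon$, and verifying that the covering-number and Lipschitz terms are dominated at the chosen net resolution $\eta$, is where the delicate work lies.
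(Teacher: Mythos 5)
Your overall architecture is the same as the paper's: an IMM-style analysis in which the finite seed-set count is replaced by the covering number $\cN(\cP,\cdot)$, concentration is proved on an $\eta$-net with $\eta=\frac{\varepsilon/3}{L_1+L_2}\LB$ and transferred to all of $\cP$ via the Lipschitz constants $L_1,L_2$, the lower tail is controlled only at the single optimum $\bx^*_{g+s}$ (giving $\theta^{(1)}$ without a covering-number factor), the $\Sampling$ loop is analyzed to certify $\LB\le\text{OPT}_{g+s}$, and the regenerated final pool of RR sets keeps the Chernoff bounds applicable. The deterministic term $s$ dropping out of all concentration statements is also exactly how the paper proceeds.

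The one step that would fail as stated is your central lemma: a \emph{two-sided, uniform} bound $|\hat g_{\cR}(\bx)-g(\bx)|\le\frac{\varepsilon}{3}\cdot\text{OPT}_{g+s}$ over all of $\cP$ is not obtainable from $\theta^{(2)}$ RR sets. The upper-tail Chernoff exponent for a fixed $\bx$ at absolute deviation $\varepsilon_2\,\text{OPT}_{g+s}$ is of order $\frac{\varepsilon_2^2\,\text{OPT}_{g+s}^2}{2g(\bx)+\frac{2}{3}\varepsilon_2\text{OPT}_{g+s}}\cdot\frac{\theta}{n}$, so it degrades as $g(\bx)$ grows; for high-influence points with $g(\bx)$ close to $\text{OPT}_{g+s}$ the per-point failure probability with $\theta=\theta^{(2)}$ is only $(\delta/\cN)^{c}$ with $c\approx\alpha-\varepsilon/3<1$, and the union bound over the exponentially large net then blows up. The constant $2\alpha'$ in $\theta^{(2)}$ is calibrated precisely for the weaker statement the paper proves (its Lemma 6(b)): the upper tail is controlled \emph{only for ``bad'' net points}, i.e.\ those with $(g+s)(\bx)<(\alpha'-\varepsilon/3)\text{OPT}_{g+s}-T$, for which $g(\bx)\le\alpha'\,\text{OPT}_{g+s}$ bounds the denominator of the exponent. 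This restriction costs nothing downstream, because your own final chaining only needs the upper-tail direction at the returned point $\hat\bx$ in contrapositive form: if $(g+s)(\hat\bx)$ were below the target, then $\hat\bx$ (after projection to the net) is a bad point, so its estimate would be too low to satisfy the guarantee of $\cA$. Replacing your uniform two-sided lemma by this bad-points-only upper-tail bound, combined with the single-point lower-tail bound at $\bx^*_{g+s}$, recovers the paper's proof with the stated constants; without that replacement the constants in Algorithm~\ref{alg-sampling} do not close.
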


%
The proof of the above theorem follows the proof structure of IMM~\cite{tang15},
	where the number of the RR sets needed is carefully adapted to accommodate
	the covering number of the $\varepsilon$-net, and the Lipschitz constants of the
	objective functions.

\subsection{Gradient Algorithms}\label{sec-gradient-method-algorithm}
In this subsection, we will show two gradient algorithms that 
	approximately maximize the function $\hat g_{\cR}(\bx)+s(\bx)$.
They are the instantiations of the generic algorithm $\cA$ in 
	Section~\ref{sec:algoFramework}: the first one works on the general model 
	and uses proximal gradient to achieve $(\frac{1}{2},\varepsilon)$-approximation, while the second one
	works on the special case of independent strategy activation and 
	uses gradient on a concave upper bound to achieve $(1-\frac{1}{e},\varepsilon)$-approximation.


\noindent{\bf General Case: $\ProxGradRIS$.\ \ }
We first consider the general case where the strategy activation functions $h_v$'s  
	are monotone, DR-submodular, $L_h$-Lipschitz and $\beta_h$-smooth, and
	the cost function $c$ is convex and $L_c$-Lipschitz.
In this case, we have that $g$ and $\hat{g}_{\cR}$ are monotone and DR-submodular
	(Lemma~\ref{lem-drsubmodular}), and
	budget-saving function $s$ is concave.
To solve this problem, we adapt the (stochastic) proximal gradient algorithm
	\cite{parikh2014proximal,nitanda2014stochastic} to provide a
	$\frac{1}{2}$-approximate solution to the following problem:
	given a convex set $\cP$, a $\beta$-smooth, non-negative, monotone, and DR-submodular function $f_1(\bx)$ on $\cP$ and a non-negative and concave function $f_2(\bx)$ 
	on $\cP$, find a solution in $\cP$ maximizing $f_1(\bx)+f_2(\bx)$.
The original proximal gradient is for the case when both $f_1$ and $f_2$ are concave,
	and we adapt it to the case when $f_1$ is monotone and DR-submodular to provide
	an approximate solution.
The reason we use proximal gradient is that our budget-saving function $s$
	may not be smooth (e.g. when the cost function is the 2-norm function).
We present the general solution first, since it may be of independent interest.
The following is the iteration procedure for the stochastic proximal gradient algorithm.
\begin{equation}\label{equ-proximal-gradient}
\left\{
\begin{array}{l}
    \bx^{(t+1)} = \text{prox}_{-\eta_t f_2}(\bx^{(t)} + \eta_t \bv^{(t)}),\\ 
    \quad\quad\text{where } \E[\bv^{(t)}] = \nabla f_1(\bx^{(t)}),\\
    \text{prox}_\phi(\bx) := \argmin_{\by\in\cP} (\phi(\by) + \frac{1}{2}||\bx-\by||^2_2),\\
    \quad\quad\text{for any convex function } \phi,
\end{array}
\right.
\end{equation}
where $\eta_t \le \frac{1}{\beta}$ is the step size and $\bv^{(t)}$ is the stochastic gradient at $\bx^{(t)}$ .
We use $\Delta$ to denote an upper bound of the diameter of $\cP$, i.e. $\Delta \ge \max_{\bx,\by\in \cP} ||\bx - \by||_2$.
The following is the main result for the above stochastic proximal gradient algorithm,
	with its proof adapted from the original proof.
%
\begin{restatable}{theorem}{thmproximalconverge}\label{thm-proximal-grad}
    Suppose that $\cP$ is a convex set, function $f_1(\bx)$ is
    $\beta$-smooth, non-negative, monotone, and DR-submodular on $\cP$, $f_2(\bx)$ is non-negative and concave on $\cP$. Let $\bx^*$ be the point that maximizes $f_1(\bx)+f_2(\bx)$. 
    Suppose that for some $\sigma > 0$, the stochastic gradient $\bv^{(t)}$ satisfies $\E||\bv^{(t)}-\nabla f_1(\bx^{(t)})||_2^2 \le \sigma^2$ for all $t$,
    then for all $T > 0$, 
     if we set $\eta_t = \eta = 1/(\beta + \frac{\sigma}{\Delta}\sqrt{2T})$,
     and iterate as shown in (\ref{equ-proximal-gradient}) starting from $\bx^{(0)}\in\cP$,
     we have
\begin{align*}
	&\E\left[\max_{t=0,1,2,\dots,T} (f_1+f_2)(\bx^{(t)})\right]\\
	&\ \ge \frac{1}{2}(f_1+f_2)(\bx^*) - \frac{\beta \Delta^2}{4T} - \frac{\sigma \Delta}{\sqrt{2T}}.
\end{align*}
%
%
\end{restatable}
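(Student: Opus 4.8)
The plan is to adapt the classical analysis of the (stochastic) proximal gradient method, replacing the concavity of the smooth part by the one structural inequality that continuous monotone DR-submodular functions obey, which is exactly what degrades the guarantee from a full approximation down to the factor $\frac12$. The single most important ingredient I would establish first is the gradient domination bound
\[
\langle \nabla f_1(\bx), \bx^* - \bx\rangle \ge f_1(\bx^*) - 2 f_1(\bx) \quad\text{for all } \bx \in \cP .
\]
To prove it I would split $\bx^* - \bx$ into its positive and negative parts. Along the non-negative direction $\bx \vee \bx^* - \bx = (\bx^*-\bx)_+$, concavity of $f_1$ (which holds along every non-negative direction for DR-submodular functions) together with monotonicity gives $\langle \nabla f_1(\bx), (\bx^*-\bx)_+\rangle \ge f_1(\bx\vee\bx^*) - f_1(\bx) \ge f_1(\bx^*) - f_1(\bx)$. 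Along the complementary direction $\bx - \bx\wedge\bx^* = (\bx^*-\bx)_-$, the endpoint form of the same concavity inequality gives $f_1(\bx) \ge f_1(\bx\wedge\bx^*) + \langle \nabla f_1(\bx), (\bx^*-\bx)_-\rangle$, hence $\langle \nabla f_1(\bx), (\bx^*-\bx)_-\rangle \le f_1(\bx) - f_1(\bx\wedge\bx^*) \le f_1(\bx)$ using $f_1\ge 0$. Subtracting the two yields the claim.

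Second, I would extract a per-step inequality. Writing $\bz^{(t)} = \bx^{(t)} + \eta\bv^{(t)}$, the defining minimization of $\text{prox}_{-\eta f_2}$ has a $1$-strongly convex objective (since $-f_2$ is convex), so its optimality measured against the comparison point $\bx^*$ gives, after expanding the squared norms and cancelling the $\frac{\eta^2}{2}\|\bv^{(t)}\|^2$ terms,
\[
\eta\big(f_2(\bx^*) - f_2(\bx^{(t+1)})\big) + \eta\langle \bv^{(t)}, \bx^* - \bx^{(t+1)}\rangle \le \tfrac12\|\bx^{(t)}-\bx^*\|^2 - \tfrac12\|\bx^{(t+1)}-\bx^*\|^2 - \tfrac12\|\bx^{(t)}-\bx^{(t+1)}\|^2 .
\]
I would then decompose $\bv^{(t)} = \nabla f_1(\bx^{(t)}) + (\bv^{(t)} - \nabla f_1(\bx^{(t)}))$, and on the deterministic part use the gradient domination above for the $\bx^*-\bx^{(t)}$ portion and $\beta$-smoothness of $f_1$ for the $\bx^{(t)}-\bx^{(t+1)}$ portion. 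The smoothness term $-\frac{\eta\beta}{2}\|\bx^{(t)}-\bx^{(t+1)}\|^2$ combines with the $-\frac12\|\bx^{(t)}-\bx^{(t+1)}\|^2$ from the prox step, and since $\eta\le 1/\beta$ the residual quadratic $\frac12(\eta\beta-1)\|\bx^{(t)}-\bx^{(t+1)}\|^2$ has a favorable sign. Using $f_1 \le f_1 + f_2 = F$ and $f_1,f_2 \ge 0$ to fold the stray $-f_1(\bx^{(t)})$ into $-F(\bx^{(t)})$, this produces a recursion of the form $\eta\big(F(\bx^*) - F(\bx^{(t+1)}) - F(\bx^{(t)})\big) + (\text{noise}_t) \le \frac12\|\bx^{(t)}-\bx^*\|^2 - \frac12\|\bx^{(t+1)}-\bx^*\|^2$ together with leftover quadratic slack.

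Third, I would sum over $t = 0, \dots, T-1$. The distance terms telescope to at most $\frac12\|\bx^{(0)}-\bx^*\|^2 \le \frac12\Delta^2$, while the function terms obey $\sum_{t}\big(F(\bx^{(t+1)})+F(\bx^{(t)})\big) \le 2T\max_{t} F(\bx^{(t)})$, which is precisely the source of the coefficient $2$ in front of the maximum and hence of the $\frac12$ factor in the statement. For the stochastic cross terms I would write $\langle \bv^{(t)}-\nabla f_1(\bx^{(t)}), \bx^*-\bx^{(t+1)}\rangle$ as $\langle\cdot, \bx^*-\bx^{(t)}\rangle$, whose conditional expectation vanishes because $\bx^{(t)}$ is history-measurable and $\bv^{(t)}$ is unbiased, plus $\langle\cdot, \bx^{(t)}-\bx^{(t+1)}\rangle$, which I would absorb by Young's inequality against the leftover $\frac12(1-\eta\beta)\|\bx^{(t)}-\bx^{(t+1)}\|^2$ slack, paying a variance cost at most $\frac{\eta^2\sigma^2}{2(1-\eta\beta)}$ per step. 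Taking expectations and dividing by $2\eta T$ gives $\E[\max_t F] \ge \frac12 F(\bx^*) - \frac{\Delta^2}{4\eta T} - \frac{\eta\sigma^2}{4(1-\eta\beta)}$, and the substitution $\eta = 1/(\beta + \frac{\sigma}{\Delta}\sqrt{2T})$ balances the two error contributions to yield the claimed $\frac{\beta\Delta^2}{4T} + \frac{\sigma\Delta}{\sqrt{2T}}$ bound.

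The main obstacle is the first step together with its interaction with the telescoping: the factor-$2$ gradient domination is what forces the awkward $F(\bx^*) - F(\bx^{(t+1)}) - F(\bx^{(t)})$ recursion in place of the textbook $F(\bx^*)-F(\bx^{(t+1)})$ one, and getting the bookkeeping right — legitimately replacing $f_1(\bx^{(t)})$ by $F(\bx^{(t)})$ and then collapsing two shifted copies of the iterate energy into $2T\max_t F$ — is where the $\frac12$ must be produced without leaking extra constant factors. The secondary difficulty is the stochastic term: because $\bx^{(t+1)}$ depends on $\bv^{(t)}$, the naive expectation is not zero-mean, so one must peel off the history-measurable part and spend exactly the strong-convexity slack left by the strict condition $\eta < 1/\beta$ to control the remainder.
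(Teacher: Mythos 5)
Your proposal is correct and follows essentially the same route as the paper's proof: the same DR-submodular gradient inequality $\langle\nabla f_1(\bx),\bx-\bz\rangle \le 2f_1(\bx)-f_1(\bx\wedge\bz)-f_1(\bx\vee\bz)\le 2f_1(\bx)-f_1(\bz)$ (which the paper imports from Chekuri et al.\ as a black box and you reprove by splitting $\bx^*-\bx$ into its signed parts) is the source of the factor $2$; the same shifted telescoping sum $\sum_t\big(F(\bx^{(t+1)})+F(\bx^{(t)})\big)$ is collapsed via $\max\ge$ average to produce the $\tfrac12$; and the same choice of $\eta$ balances the two error terms. The two places where you diverge are minor and both work. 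First, you obtain the per-step inequality from the three-point/strong-convexity property of the prox minimization against the comparison point $\bx^*$, whereas the paper works with the gradient mapping $G_\eta$ and the subgradient characterization $G_\eta(\bx)+\bv\in-\partial f_2(\bx-\eta G_\eta(\bx))$ plus convexity of $-f_2$; these yield the same recursion. Second, for the stochastic cross term the paper recenters at the exact-gradient prox point and invokes non-expansiveness of $\text{prox}_{-\eta f_2}$ to get a clean $-\eta\sigma^2$ per step, while you peel off the history-measurable (zero-mean) part $\langle\bv^{(t)}-\nabla f_1(\bx^{(t)}),\bx^*-\bx^{(t)}\rangle$ and absorb the remainder by Young's inequality against the $\tfrac{1-\eta\beta}{2}\|\bx^{(t+1)}-\bx^{(t)}\|_2^2$ slack; your version requires $\eta<1/\beta$ strictly, which the prescribed step size guarantees for $\sigma>0$, and in fact yields a slightly smaller constant ($\tfrac{3}{4}\cdot\tfrac{\sigma\Delta}{\sqrt{2T}}$ in place of $\tfrac{\sigma\Delta}{\sqrt{2T}}$), so the claimed bound follows.
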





Note that if we use exact gradient instead of the stochastic gradient, we simply set
	$\sigma=0$ in the above theorem.
To apply the proximal gradient algorithm and Theorem~\ref{thm-proximal-grad} to
	maximize $\hat g_{\cR}+s$, we compute the exact gradient of $\hat g_{\cR}$ and
	also derive the Lipschitz and smoothness constants, as shown below.
For RR set sequence $\cR=\{R_1, \ldots, R_\theta \}$, let
	$\nu^{(1)}(\cR) = \sum_{R\in \cR} |R|/\theta$ be the average RR set size in $\cR$,
	$\nu^{(2)}(\cR) = \sum_{R\in \cR} |R|^2/\theta$ be the average squared size in $\cR$, and $\nu^{(3)}(\cR) = \sum_{R\in \cR} |R|^3/\theta$ be the average cubed size.
\begin{lemma} \label{lem:generalGradient}
If functions $h_v(\bx)$'s are $L_h$-Lipschitz, then function $\hat g_{\cR}(\bx)$ 
	is $(\nu^{(1)}(\cR)nL_h)$-Lipschitz, and function $g(\bx)$ is $(n^2L_h)$-Lipschitz. 
If functions $h_v(\bx)$'s are $\beta_h$-smooth, then function $\hat g_{\cR}(\bx)$ is
	$(\nu^{(1)}(\cR) n \beta_h + \nu^{(2)}(\cR) n L_h^2)$-smooth.
The gradient of function $\hat g_{\cR}(\bx)$ is
\begin{equation} \label{eq:gradienthv}
\nabla \hat g_{\cR}(\bx) = \frac{n}{\theta}\sum_{R\in\cR,v'\in R}\nabla h_{v'}(\bx)\prod_{v\in R,v\neq v'}(1-h_v(\bx)).
\end{equation}
\end{lemma}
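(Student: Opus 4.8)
The plan is to prove the three claims in order, starting with the gradient formula in Eq.~\eqref{eq:gradienthv}, since both the Lipschitz and smoothness bounds build on it. To obtain the gradient I would differentiate $\hat g_{\cR}$ termwise: each summand is $1-\prod_{v\in R}(1-h_v(\bx))$, and applying the product rule to $\prod_{v\in R}(1-h_v(\bx))$ gives $-\sum_{v'\in R}\nabla h_{v'}(\bx)\prod_{v\in R, v\neq v'}(1-h_v(\bx))$. Negating, summing over $R\in\cR$, and attaching the prefactor $n/\theta$ reproduces Eq.~\eqref{eq:gradienthv}. This step is routine calculus.

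For the Lipschitz constants I would use the equivalence, noted in Section~\ref{sec:model}, that when the gradient exists $L$-Lipschitzness is the same as the uniform bound $||\nabla \hat g_{\cR}(\bx)||_2 \le L$. Applying the triangle inequality to Eq.~\eqref{eq:gradienthv}, using $||\nabla h_{v'}(\bx)||_2\le L_h$ and the elementary bound $0\le \prod_{v\in R, v\neq v'}(1-h_v(\bx))\le 1$ (each $h_v\in[0,1]$), every term contributes at most $L_h$, and the number of terms is $\sum_{R\in\cR}|R| = \theta\,\nu^{(1)}(\cR)$; this yields the $\nu^{(1)}(\cR)n L_h$ bound for $\hat g_{\cR}$. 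For $g$ itself the identical argument applied to $\nabla g(\bx)=n\,\E_R[\cdots]$ uses $|R|\le n$, hence $\E_R[|R|]\le n$, to give the $n^2L_h$ bound.

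The smoothness bound is the main work, and I would bound $||\nabla \hat g_{\cR}(\bx)-\nabla \hat g_{\cR}(\by)||_2$ one RR set at a time. Writing $P_{v'}(\bx)=\prod_{v\in R, v\neq v'}(1-h_v(\bx))$, I would split each term as
\[
\nabla h_{v'}(\bx)P_{v'}(\bx) - \nabla h_{v'}(\by)P_{v'}(\by) = \left(\nabla h_{v'}(\bx)-\nabla h_{v'}(\by)\right)P_{v'}(\bx) + \nabla h_{v'}(\by)\left(P_{v'}(\bx) - P_{v'}(\by)\right).
\]
The first piece is bounded by $\beta_h||\bx-\by||_2$ using $\beta_h$-smoothness of $h_{v'}$ and $P_{v'}\in[0,1]$; the second is bounded by $L_h\,|P_{v'}(\bx)-P_{v'}(\by)|$.

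The crux is controlling $|P_{v'}(\bx)-P_{v'}(\by)|$. Here I would establish a telescoping lemma: a product of factors each lying in $[0,1]$ and each $L_h$-Lipschitz is itself Lipschitz with constant equal to the sum of the factors' constants, since each telescoped summand is a bounded-in-$[0,1]$ prefix/suffix product times a single-factor difference. As $P_{v'}$ has $|R|-1$ such factors, this gives $|P_{v'}(\bx)-P_{v'}(\by)|\le |R|\,L_h||\bx-\by||_2$. Summing the two pieces over the $|R|$ choices of $v'$ yields a per-RR-set bound $(|R|\beta_h + |R|^2 L_h^2)||\bx-\by||_2$, and averaging over $\cR$ with prefactor $n$ produces the claimed constant $\nu^{(1)}(\cR)n\beta_h + \nu^{(2)}(\cR)n L_h^2$. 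The main obstacle is getting the telescoping lemma right --- ensuring the factors stay in $[0,1]$ so the telescoped terms do not blow up --- and cleanly separating the two sources of curvature: the smoothness of each $h_v$ (the $\beta_h$ term) and the coupling between the gradient of one factor and the variation of the remaining product (the $L_h^2$ term).
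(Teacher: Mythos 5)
Your proposal is correct and follows essentially the same route as the paper's proof: termwise differentiation for the gradient, a norm bound via $\|\nabla h_{v'}\|_2\le L_h$ and $\sum_{R}|R|=\theta\,\nu^{(1)}(\cR)$ for the Lipschitz constant of $\hat g_{\cR}$, and for smoothness the same two-term split into a $\beta_h$ piece and an $L_h^2$ piece with the telescoping bound $|P_{v'}(\bx)-P_{v'}(\by)|\le |R|L_h\|\bx-\by\|_2$. The only (harmless) divergence is the $n^2L_h$ Lipschitz bound for $g$: you differentiate the RR-set representation $g(\bx)=n\,\E_R[1-\prod_{u\in R}(1-h_u(\bx))]$ and use $|R|\le n$, whereas the paper expands $\nabla g$ over seed sets and bounds the marginal-influence coefficients $|f_{u'}(\bx)|\le n$; your version is a bit shorter and equally valid since the expectation is a finite sum.
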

With Lemma~\ref{lem:generalGradient} and Theorem~\ref{thm-proximal-grad}, we can conclude
 the gradient algorithm $\cA$ working with $\GradRIS$ with the following settings:
 (a) we use the proximal gradient iteration given in Eq.~\eqref{equ-proximal-gradient},
 with stochastic gradient $\bv^{(t)}$ replaced with the exact gradient
 $\nabla \hat g_{\cR}(\bx^{(t)})$ as given in Eq.~\eqref{eq:gradienthv};
 (b) we set step size $\eta_t = 1/(\nu^{(1)}(\cR) n \beta_h + \nu^{(2)}(\cR) n L_h^2)$
	when calling the algorithm with $\cR$;
(c) we set number of steps $T=3(\nu^{(1)}(\cR) n \beta_h + \nu^{(2)}(\cR) n L_h^2)\cdot \Delta^2/4\varepsilon$; 
(d) we use $L_1 = L_2 = n^2 L_h+\lambda L_c$ (since $n \ge \nu^{(1)}(\cR)$) as parameters in $\GradRIS$.
We refer to the full algorithm with the above setting as $\ProxGradRIS$.
The following theorem summarizes the approximation guarantee of $\ProxGradRIS$.
%
%
\begin{restatable}{theorem}{thmproxgradris}\label{thm-proxgrad-ris}
    For any $\varepsilon, \ell > 0$, 
	with probability at least $1-\frac{1}{n^\ell}$, $\ProxGradRIS$ outputs a solution $\bx$ that is a $(\frac{1}{2}-\varepsilon)$-approximation of the optimal solution $\text{OPT}_{g+s}$ of CIM-BS, i.e.
    $(g+s)(\bx) \ge (\frac{1}{2}-\varepsilon)\text{OPT}_{g+s}$.
\end{restatable}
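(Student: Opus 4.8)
The plan is to reduce the statement to Theorem~\ref{thm-combining-gradient-rrset} instantiated with $\alpha=\frac12$. That meta-theorem already does the heavy lifting on the RIS side: it guarantees that $\GradRIS$ turns any $(\alpha,\varepsilon/3)$-approximate gradient algorithm $\cA$ for the surrogate objective $\hat g_{\cR}+s$ into an $(\alpha-\varepsilon)$-approximation of $\text{OPT}_{g+s}$ with probability at least $1-1/n^\ell$. Hence the only thing left to establish is that the particular instantiation $\ProxGradRIS$ --- proximal gradient with exact gradient, step size $\eta=1/\beta$, and the number of iterations $T$ specified in settings (b)--(c) --- is indeed a $(\frac12,\varepsilon/3)$-approximate gradient algorithm for $\hat g_{\cR}+s$, and that the Lipschitz constants $L_1,L_2$ fed to $\GradRIS$ are valid upper bounds. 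Setting $\alpha=\frac12$ then yields $(g+s)(\bx)\ge(\frac12-\varepsilon)\text{OPT}_{g+s}$ directly.

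To verify the $(\frac12,\varepsilon/3)$-approximation, I would check that $f_1:=\hat g_{\cR}$ and $f_2:=s$ satisfy the hypotheses of Theorem~\ref{thm-proximal-grad}. By Lemma~\ref{lem-drsubmodular}, $\hat g_{\cR}$ is monotone and DR-submodular, and it is non-negative since each summand $1-\prod_{v\in R}(1-h_v(\bx))\in[0,1]$; by Lemma~\ref{lem:generalGradient} it is $\beta$-smooth with $\beta=\nu^{(1)}(\cR)n\beta_h+\nu^{(2)}(\cR)nL_h^2$, the exact smoothness constant used to set $\eta$. For $f_2=s(\bx)=\lambda(k-c(\bx))$: since $c$ is convex and $\lambda\ge0$, $s$ is concave, and since every $\bx\in\cP$ satisfies $c(\bx)\le k$, we have $s(\bx)\ge0$ on $\cP$, so $s$ is non-negative and concave as required; the prox step is well defined because $-\eta_t s$ is convex. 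Crucially, because the exact gradient $\nabla\hat g_{\cR}$ is available in closed form via Eq.~\eqref{eq:gradienthv}, I would run the iteration with $\bv^{(t)}=\nabla\hat g_{\cR}(\bx^{(t)})$, i.e. $\sigma=0$, so the last error term $\sigma\Delta/\sqrt{2T}$ in Theorem~\ref{thm-proximal-grad} vanishes.

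With $\sigma=0$, Theorem~\ref{thm-proximal-grad} gives $\max_t(\hat g_{\cR}+s)(\bx^{(t)})\ge\frac12(\hat g_{\cR}+s)(\bx^*)-\frac{\beta\Delta^2}{4T}$, where $\bx^*$ maximizes $\hat g_{\cR}+s$ over $\cP$. It then remains to choose $T$ so that the absolute error $\frac{\beta\Delta^2}{4T}$ is at most the additive error requested in each call to $\cA$; substituting the prescribed $T=3\beta\Delta^2/(4\varepsilon)$ drives this term below the required fraction, certifying the $(\frac12,\varepsilon/3)$-approximation, while $T=O(1/\varepsilon)$ keeps the running time polynomial in $1/\varepsilon$. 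Finally I would confirm the Lipschitz bounds: $s$ is $\lambda L_c$-Lipschitz since $c$ is $L_c$-Lipschitz, so by Lemma~\ref{lem:generalGradient} the surrogate $\hat g_{\cR}+s$ is $(\nu^{(1)}(\cR)nL_h+\lambda L_c)$-Lipschitz and the true objective $g+s$ is $(n^2L_h+\lambda L_c)$-Lipschitz; since $\nu^{(1)}(\cR)\le n$, the common choice $L_1=L_2=n^2L_h+\lambda L_c$ upper-bounds both, matching setting (d).

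The main obstacle I anticipate is the bookkeeping that links the absolute additive guarantee $\frac{\beta\Delta^2}{4T}$ of Theorem~\ref{thm-proximal-grad} to the scaled additive errors ($\varepsilon\LB$ in the meta-algorithm and $\varepsilon x_i/3$ inside $\Sampling$) with which $\GradRIS$ actually invokes $\cA$: one must ensure that a single, $\varepsilon$-dependent but $\cR$-adapted choice of $T$ simultaneously meets the additive tolerance of every call while staying $\text{poly}(1/\varepsilon)$, and that the algorithm returns the best-seen iterate so the $\max_t$ guarantee transfers. Once this scaling is pinned down, the remaining steps are routine and the theorem follows from Theorem~\ref{thm-combining-gradient-rrset} with $\alpha=\frac12$.
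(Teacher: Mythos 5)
Your proposal is correct and follows essentially the same route as the paper: the paper treats Theorem~\ref{thm-proxgrad-ris} as a direct consequence of Theorem~\ref{thm-combining-gradient-rrset} with $\alpha=\frac{1}{2}$, certifying that the proximal-gradient subroutine is a $(\frac{1}{2},\varepsilon/3)$-approximate gradient algorithm via Theorem~\ref{thm-proximal-grad} with $\sigma=0$ and the smoothness and Lipschitz constants of Lemma~\ref{lem:generalGradient}, exactly as you describe. The bookkeeping you flag (matching $\frac{\beta\Delta^2}{4T}$ to the additive tolerances $\varepsilon x_i/3$ and $\varepsilon\LB$, using $x_i,\LB\ge 1$, and returning the best-seen iterate) is precisely what the paper's choice of $T$ in setting (c) is designed to handle.
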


We remark that the actual computation of the
	proximal step $\text{prox}_{-\eta_t f_2}(\cdot)$ in 
	Eq.\eqref{equ-proximal-gradient} depends on domain $\cD$ and cost function $c$.
When $\cD = \R_{+}^d$ and $c$ is 1-norm or 2-norm function, we can derive efficient algorithm
	for the proximal step, as summarized below.
%
%
\begin{restatable}{lemma}{lemproximalonenorm}\label{lem-proximal-onenorm-twonorm}
    When $c(\bx) = ||\bx||_1$ and $\cD = \R_{+}^d$, the proximal step can be done in time $O(d\log d)$. When $c(\bx) = ||\bx||_2$ and $\cD = \R_{+}^d$, the proximal step can be done in $O(d)$.
\end{restatable}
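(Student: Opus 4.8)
The plan is to unfold the definition of the proximal operator and then exploit the special structure of the two cost functions. Recall that in the iteration~\eqref{equ-proximal-gradient} we apply $\text{prox}_{-\eta_t f_2}$ with $f_2 = s$ and $s(\by) = \lambda(k - c(\by))$. Writing $\mu = \eta_t\lambda \ge 0$ and discarding the additive constant $-\eta_t\lambda k$, which does not affect the minimizer, the proximal step at a point $\bx$ becomes
\begin{equation*}
\text{prox}_{-\eta_t s}(\bx) = \argmin_{\by\in\R^d_{+},\, c(\by)\le k} \left(\mu\, c(\by) + \tfrac{1}{2}\|\bx-\by\|_2^2\right).
\end{equation*}
Both objectives are strictly convex in $\by$ over the convex feasible set, so the minimizer is unique and I only need to locate it efficiently.

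First I would handle the 1-norm cost $c(\by) = \|\by\|_1 = \sum_j y_j$ (using $\by \ge \vzero$). Ignoring the coupling constraint $\sum_j y_j \le k$, the objective separates across coordinates into $\min_{y_j\ge 0}\left(\mu y_j + \tfrac12(x_j - y_j)^2\right)$, whose solution is the soft-threshold $y_j = (x_j - \mu)_+$, computable in $O(d)$. If $\sum_j (x_j-\mu)_+ \le k$, this candidate is feasible and hence optimal. Otherwise, by strict convexity the optimum lies on the boundary $\sum_j y_j = k$, where $\mu\sum_j y_j = \mu k$ is constant, so the problem reduces to the Euclidean projection $\argmin_{\by\ge \vzero,\,\sum_j y_j = k}\tfrac12\|\bx-\by\|_2^2$ onto the scaled simplex. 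I would invoke the standard sorting-based simplex-projection routine, which finds the threshold $\tau$ with $\sum_j (x_j - \tau)_+ = k$ in $O(d\log d)$; this dominates the running time.

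Next I would treat the 2-norm cost $c(\by) = \|\by\|_2$, where the coordinates no longer decouple, so instead I reduce the problem to a single scalar variable. Writing $\bx = \bx_+ - \bx_-$ with coordinatewise positive and negative parts $\bx_+=\max(\bx,\vzero)$ and $\bx_-=\max(-\bx,\vzero)$ having disjoint supports, I would first argue that for any fixed value $r=\|\by\|_2$ the best feasible $\by\ge \vzero$ maximizes $\langle\bx,\by\rangle$, giving $\by = r\,\bx_+/\|\bx_+\|_2$; hence the optimum has the form $\by = \beta\,\bx_+$ for a scalar $\beta\ge 0$. Substituting and using disjoint supports yields the one-dimensional convex problem $\min_{\beta\ge 0,\,\beta\|\bx_+\|_2\le k}\big(\mu\beta\|\bx_+\|_2 + \tfrac12(1-\beta)^2\|\bx_+\|_2^2\big)$, whose closed-form minimizer is $\beta^* = \mathrm{clip}\big(1-\mu/\|\bx_+\|_2,\,0,\,k/\|\bx_+\|_2\big)$, followed by $\by^*=\beta^*\bx_+$. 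Every quantity here is computed in $O(d)$.

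The main obstacle is establishing the two structural reductions rather than the arithmetic that follows. For the 2-norm case, I must justify carefully that the minimizer is a non-negative scalar multiple of $\bx_+$: this follows from the observation that, once the norm $r=\|\by\|_2$ is fixed, the penalty $\mu r$ is constant and minimizing $\tfrac12\|\bx-\by\|_2^2$ is equivalent to maximizing $\langle\bx,\by\rangle$ over the non-negative sphere of radius $r$, which is solved exactly by aligning $\by$ with $\bx_+$ (via Cauchy--Schwarz, zeroing the coordinates where $x_j\le 0$). For the 1-norm case, the only subtlety is verifying that when the unconstrained soft-threshold solution overshoots the budget the active-constraint reduction to simplex projection is valid, which is immediate from strict convexity of the objective and convexity of $\cP$. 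Once these reductions are in place, the stated $O(d\log d)$ and $O(d)$ bounds follow directly.
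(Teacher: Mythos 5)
Your proof is correct, but it reaches the two running-time bounds by a genuinely different route than the paper. For the 1-norm case, the paper argues directly about the structure of the optimizer: it derives a chain of exchange properties (the optimal $y_i$ satisfies $y_i \le \max\{x_i-\eta',0\}$; all nonzero coordinates share the same gap $x_i - y_i$; the support is monotone in the sorted order of $\bx$), then sorts and binary-searches for the breakpoint index, giving $O(d\log d)$. You instead solve the soft-thresholding problem obtained by dropping the budget constraint, and when it overshoots you observe that the linear term $\mu\sum_j y_j$ is constant on the active face $\sum_j y_j = k$, reducing everything to the standard sorting-based Euclidean projection onto a scaled simplex. Your reduction is cleaner and leans on a well-known black box, while the paper's self-contained case analysis effectively re-derives that projection routine; the two are equivalent in cost. (One small point worth making explicit in your version: the claim that the optimum lies on the boundary when the soft-threshold solution is infeasible follows because an interior optimum of the budget constraint would, by convexity, be the unique global minimizer over $\R^d_+$, contradicting infeasibility of that minimizer --- you gesture at this via strict convexity, and it is fine.) For the 2-norm case, the paper simply asserts that the optimizer lies on the line through $\vzero$ and $\bx$ and solves a univariate quadratic; your argument via $\bx_+$ and Cauchy--Schwarz is more careful and strictly more general, since the paper's assertion as stated fails when $\bx$ has negative coordinates (the correct direction is $\bx_+$, not $\bx$). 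In the algorithm's actual use the prox input $\bx^{(t)}+\eta_t\nabla\hat g_{\cR}(\bx^{(t)})$ is non-negative because the $h_v$ are monotone, so $\bx_+=\bx$ and the two coincide there, but your version closes a gap the paper leaves open.
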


\noindent{\bf Independent Strategy Activation Case: $\UpperGradRIS$.\ \ }
Next, we introduce an $\left(1-\frac{1}{e}\right)$-approximation for maximizing $\hat g_{\cR}+s$ under the case of independent strategy activation. 
Recall that in the independent strategy activation case, each function $h_v(\bx) = 1 - \prod_{j\in [d]} (1-q_{v,j}(x_j))$, where $q_{v,j}(x_j)$ is monotone and concave in $x_j$. 
In this case, we can write $\hat g(\bx)$ into the following form.
\begin{align*}
&\hat g_{\cR}(\bx) =\frac{n}{\theta}\sum_{R\in\cR}\left(1- \prod_{v\in R}(1-h_v(\bx))\right)\\
=& \frac{n}{\theta}\sum_{R\in\cR}\left(1- \prod_{v\in R}(\prod_{j\in [d]}(1-q_{v,j}(\bx))\right).
\end{align*}
The above form makes $\hat g_{\cR}(\bx)$ belong to coverage functions, which
	has the following concave upper and lower bounds (\cite{karimi2017stochastic}):
\begin{proposition}[\cite{karimi2017stochastic}]\label{prop-concave-upperbound}
    For any $\bx\in [0,1]^l$, let $\alpha(\bx) = 1 - \prod_{i=1}^l(1-x_i)$ and $\beta(\bx) = \min\{1,\sum_{i=1}^l x_i\}$, we have $(1-1/e)\beta(\bx) \le \alpha(\bx) \le \beta(\bx)$.
\end{proposition}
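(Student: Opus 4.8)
The plan is to establish the two inequalities $\alpha(\bx)\le\beta(\bx)$ and $\alpha(\bx)\ge(1-1/e)\beta(\bx)$ separately, writing $s=\sum_{i=1}^l x_i$ throughout. For the upper bound, I would first note $\alpha(\bx)=1-\prod_{i=1}^l(1-x_i)\le 1$ since each factor $1-x_i\in[0,1]$, and then argue $\alpha(\bx)\le s$. The cleanest way to see the latter is a union-bound interpretation: treating the $x_i$ as success probabilities of independent events $A_i$, we have $\prod_i(1-x_i)=\Pr[\text{no }A_i\text{ occurs}]$, so $\alpha(\bx)=\Pr[\cup_i A_i]\le\sum_i\Pr[A_i]=s$. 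Purely algebraically this is the Weierstrass inequality $\prod_i(1-x_i)\ge 1-\sum_i x_i$, which one can prove by a short induction on $l$. Combining the two observations gives $\alpha(\bx)\le\min\{1,s\}=\beta(\bx)$.

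For the lower bound, the key step is to bound $\prod_i(1-x_i)$ in terms of $s$ alone. I would apply the AM--GM inequality to the nonnegative numbers $1-x_i$ (valid since $s\le l$ forces $1-s/l\ge 0$):
\[
\prod_{i=1}^l(1-x_i)\le\Bigl(\tfrac{1}{l}\sum_{i=1}^l(1-x_i)\Bigr)^l=\Bigl(1-\tfrac{s}{l}\Bigr)^l\le e^{-s},
\]
where the last inequality uses $1-t\le e^{-t}$ with $t=s/l$. Hence $\alpha(\bx)\ge 1-e^{-s}$, and it remains to show $1-e^{-s}\ge(1-1/e)\beta(\bx)$.

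This splits into two cases on the value of $s$. When $s\ge 1$ we have $\beta(\bx)=1$ and $1-e^{-s}\ge 1-e^{-1}=(1-1/e)\beta(\bx)$ immediately. When $s\le 1$ we have $\beta(\bx)=s$ and must verify $1-e^{-s}\ge(1-1/e)s$ on $[0,1]$; I would do this by setting $\phi(s)=1-e^{-s}-(1-1/e)s$, observing $\phi(0)=\phi(1)=0$ and $\phi''(s)=-e^{-s}<0$, so $\phi$ is concave and therefore nonnegative throughout $[0,1]$. The main obstacle is precisely this small-sum case: the intermediate bound $\alpha(\bx)\ge 1-e^{-s}$ is not linear in $s$, so it cannot be compared directly to the linear function $(1-1/e)s$ without the concavity (equivalently, chord-versus-curve) argument that pins the worst case to the endpoints $s=0$ and $s=1$.
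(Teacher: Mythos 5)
Your proof is correct. Note that the paper itself gives no proof of this proposition: it is imported verbatim from the cited reference \cite{karimi2017stochastic} and used as a black box, so there is nothing in this document to compare against. Your argument --- Weierstrass/union bound for $\alpha(\bx)\le\beta(\bx)$, then AM--GM to get $\alpha(\bx)\ge 1-e^{-s}$ and the concavity of $\phi(s)=1-e^{-s}-(1-1/e)s$ with $\phi(0)=\phi(1)=0$ to handle the case $s\le 1$ --- is a complete, self-contained derivation and is essentially the standard proof of this coverage-function bound.
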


By Proposition~\ref{prop-concave-upperbound}, we can optimize $\bar g_{\cR} + s$ where $\bar g_{\cR}(\bx)$ is
	the upper bound of $\hat g_{\cR}(\bx)$ defined as: 
\begin{equation}
    \bar g_{\cR}(\bx) := \frac{n}{\theta}\sum_{R\in\cR}\min\{1,\sum_{j\in [d], v\in R}q_{v,j}(\bx)\}.
\end{equation}

Since the function $g_{\cR}(\bx)$ is non-smooth, we use the projected subgradient method to maximize the function $\bar g_{\cR}+s$~\cite{nesterov2013introductory}, as summarized by the following lemma.

\begin{restatable}{lemma}{lemconcaveupperbound}\label{lem-concave-upperbound}
    In the case of independent strategy activation, suppose that $\bar g_{\cR}+s$ is $L_{\bar g+s}$-Lipschitz. If we use projected subgradient descent to optimize the function $(\bar g_{\cR}+s)(\bx)$ with step size $\eta_t = \frac{\Delta}{L_{\bar g+s}\sqrt{t}}$ and let $\by$ denote the output where $(\bar g_{\cR}+s)(\by) \ge \max_{\bx\in\cP}(\bar g_{\cR}+s)(\bx) - \varepsilon$. Then
    $(\hat g_{\cR}+s)(\by) \ge \left(1-\frac{1}{e}\right)\max_{\bx\in\cP}(\hat g_{\cR}+s)(\bx) - \varepsilon$,
    and $\by$ can be solved in $\frac{(\Delta L_{\bar g+s})^2}{\varepsilon^2}$ iterations.
\end{restatable}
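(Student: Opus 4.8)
The plan is to decouple the statement into two independent pieces: (i) an \emph{approximation transfer} that converts the additive optimization guarantee for the concave surrogate $\bar g_{\cR}+s$ into the claimed $(1-\tfrac1e)$-multiplicative guarantee for $\hat g_{\cR}+s$, and (ii) a bound on the \emph{iteration count}, which is just the textbook convergence rate of projected subgradient ascent. The engine for (i) is the two-sided sandwiching of Proposition~\ref{prop-concave-upperbound}, and for (ii) it is the concavity of $\bar g_{\cR}+s$.

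First I would record two pointwise facts valid for every $\bx\in\cP$. Applying Proposition~\ref{prop-concave-upperbound} separately to each RR set $R\in\cR$, treating the values $\{q_{v,j}(x_j)\}_{v\in R,\,j\in[d]}$ (each lying in $[0,1]$) as the coordinates, and then averaging over $\cR$ with the factor $n/\theta$, yields $(1-\tfrac1e)\,\bar g_{\cR}(\bx)\le \hat g_{\cR}(\bx)\le \bar g_{\cR}(\bx)$. Second, since $c(\bx)\le k$ on $\cP$ and $\lambda\ge 0$, the budget-saving term $s(\bx)=\lambda(k-c(\bx))$ is non-negative on $\cP$. The key — and the only delicate — step is that in the transfer $\hat g_{\cR}$ should pick up the factor $1-\tfrac1e$ while $s$ must not be degraded; I would resolve this by using non-negativity of $s$ in the form $s(\by)\ge (1-\tfrac1e)s(\by)$, giving $(\hat g_{\cR}+s)(\by)\ge (1-\tfrac1e)\bar g_{\cR}(\by)+s(\by)\ge (1-\tfrac1e)(\bar g_{\cR}+s)(\by)$. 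Then the hypothesized additive guarantee $(\bar g_{\cR}+s)(\by)\ge \max_{\bx\in\cP}(\bar g_{\cR}+s)(\bx)-\varepsilon$, combined with $\max_{\bx\in\cP}(\bar g_{\cR}+s)(\bx)\ge \max_{\bx\in\cP}(\hat g_{\cR}+s)(\bx)$ (evaluate the left max at the maximizer of $\hat g_{\cR}+s$ and use $\bar g_{\cR}\ge \hat g_{\cR}$), chains to $(\hat g_{\cR}+s)(\by)\ge (1-\tfrac1e)\bigl(\max_{\bx\in\cP}(\hat g_{\cR}+s)(\bx)-\varepsilon\bigr)$. Absorbing the harmless slack $(1-\tfrac1e)\varepsilon\le\varepsilon$ gives exactly the stated bound.

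For the iteration count I would first confirm that projected subgradient ascent is the right tool: each $q_{v,j}$ is concave, so $\sum_{v\in R,\,j}q_{v,j}(x_j)$ is concave, $\min\{1,\cdot\}$ of a concave function is concave, and the non-negative combination $\bar g_{\cR}$ is therefore concave; together with concavity of $s$, the objective $\bar g_{\cR}+s$ is concave, so maximizing it over the convex set $\cP$ is a concave program. I would then invoke the standard convergence rate of the projected subgradient method for an $L_{\bar g+s}$-Lipschitz concave objective over a domain of diameter $\Delta$~\cite{nesterov2013introductory}: with the prescribed step size $\eta_t=\Delta/(L_{\bar g+s}\sqrt t)$, the optimality gap of the best iterate is $O\!\bigl(\Delta L_{\bar g+s}/\sqrt T\bigr)$, so $T=\Theta\!\bigl((\Delta L_{\bar g+s})^2/\varepsilon^2\bigr)$ iterations suffice to push it below $\varepsilon$, matching the claim.

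I expect the main obstacle to be purely the asymmetric $(1-\tfrac1e)$ bookkeeping of part~(i), where the two summands of the objective must be treated differently; once non-negativity of $s$ on $\cP$ is exploited, the rest is a direct composition of Proposition~\ref{prop-concave-upperbound} with the cited convergence guarantee, and no new estimates are required beyond the assumed Lipschitz constant $L_{\bar g+s}$.
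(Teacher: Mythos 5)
Your proposal is correct and follows essentially the same route as the paper's proof: establish the pointwise sandwich $(1-\tfrac1e)\bar g_{\cR}\le \hat g_{\cR}\le \bar g_{\cR}$ from Proposition~\ref{prop-concave-upperbound}, use non-negativity of $s$ on $\cP$ to extend it to the full objective, chain through the additive guarantee for the concave surrogate, and invoke the standard projected subgradient rate for the concave, $L_{\bar g+s}$-Lipschitz function $\bar g_{\cR}+s$. If anything, you are slightly more explicit than the paper about how the proposition is applied per RR set and why $s\ge 0$ on $\cP$, but the argument is the same.
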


The following lemma presents the Lipschitz constants and subgradients needed in
	Lemma~\ref{lem-concave-upperbound}.
\begin{lemma}\label{lem:independentGradient}
	Suppose that functions $q_{v,j}(x_j)$'s are $L_q$-Lipschitz, then 
	function $g(\bx)$ is $n^2\sqrt{d}L_q$-Lipschitz, and 
	functions $\hat g_{\cR}(\bx)$ and $\bar g_{\cR}(\bx)$ are
	$\nu^{(1)}(\cR) n \sqrt{d}L_q$-Lipschitz.
	The subgradient of the function $\bar g_{\cR}(\bx)$ is
	\begin{align} \label{eq:subgradientbarg}
	\frac{n}{\theta}\sum_{R\in \cR}\left\{
	\begin{aligned}
	0 & \text{, if }\sum_{j\in [d],v\in R} q_{v,j}(x_j)\ge 1, \\
	\sum_{v\in R, j\in [d]}\nabla q_{v,j}(x_j) & \text{, if }\sum_{j\in [d],v\in R} q_{v,j}(x_j)<1.
	\end{aligned}
	\right.
	\end{align}
\end{lemma}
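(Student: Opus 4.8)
The plan is to reduce everything to bounding gradient and subgradient norms, invoking the equivalence noted in Section~\ref{sec:model} that, wherever the gradient exists, being $L$-Lipschitz is the same as having $\|\nabla f(\bx)\|_2 \le L$ at every $\bx\in\cP$. The starting point is the partial derivatives of $h_v$. Since $h_v(\bx) = 1 - \prod_{j\in[d]}(1-q_{v,j}(x_j))$ and $q_{v,j}$ depends only on $x_j$, I would compute $\partial h_v/\partial x_j = q_{v,j}'(x_j)\prod_{j'\ne j}(1-q_{v,j'}(x_{j'}))$. Because each $q_{v,j'}(x_{j'})\in[0,1]$, every factor $1-q_{v,j'}$ lies in $[0,1]$, so $|\partial h_v/\partial x_j| \le |q_{v,j}'(x_j)| \le L_q$. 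As $\nabla h_v$ has $d$ coordinates each bounded by $L_q$, this yields $\|\nabla h_v(\bx)\|_2 \le \sqrt{d}\,L_q$. (Here $\nabla q_{v,j}(x_j)$, as used in the statement, is the vector in $\R^d$ with only its $j$-th coordinate nonzero, equal to $q_{v,j}'(x_j)$.)

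For $\hat g_{\cR}$ and $g$ I would plug this bound into the exact gradient formula of Lemma~\ref{lem:generalGradient}, namely $\nabla\hat g_{\cR}(\bx) = \frac{n}{\theta}\sum_{R\in\cR,\,v'\in R}\nabla h_{v'}(\bx)\prod_{v\in R,v\ne v'}(1-h_v(\bx))$. Each product factor lies in $[0,1]$, so by the triangle inequality the norm is at most $\frac{n}{\theta}\sum_{R\in\cR}\sum_{v'\in R}\sqrt{d}\,L_q = n\sqrt{d}\,L_q\cdot\frac{1}{\theta}\sum_{R\in\cR}|R| = \nu^{(1)}(\cR)\,n\sqrt{d}\,L_q$, which is the claimed Lipschitz constant for $\hat g_{\cR}$. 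The same estimate applied to $g(\bx)=n\,\E_R[1-\prod_{u\in R}(1-h_u(\bx))]$, using $|R|\le n$ since every RR set is a subset of $V$, gives $\|\nabla g(\bx)\|_2 \le n\cdot n\sqrt{d}\,L_q = n^2\sqrt{d}\,L_q$.

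For $\bar g_{\cR}$ I would first justify the subgradient formula. Each summand $\min\{1,\sum_{j\in[d],v\in R}q_{v,j}(x_j)\}$ is a minimum of a constant and a concave function, hence concave, and its superdifferential is $\{0\}$ when the inner sum exceeds $1$ and equals the superdifferential of $\sum_{j,v\in R}q_{v,j}$ when it is below $1$; selecting $\sum_{v\in R,j\in[d]}\nabla q_{v,j}(x_j)$ on the latter branch produces exactly the stated subgradient. To bound its norm I would group terms by coordinate: the $j$-th coordinate of one RR set's contribution is $\sum_{v\in R}q_{v,j}'(x_j)$, of magnitude at most $|R|L_q$, so the contribution has norm at most $\sqrt{d}\,|R|L_q$, and summing over $R$ with the $n/\theta$ prefactor again yields the bound $\nu^{(1)}(\cR)\,n\sqrt{d}\,L_q$, matching the Lipschitz constant stated for $\bar g_{\cR}$.

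None of these steps is conceptually hard; the computation is routine once the gradient formula of Lemma~\ref{lem:generalGradient} is in hand. The only points that require care are (i) correctly accounting for the $\sqrt{d}$ factor---each coordinate of $\nabla h_v$ (resp. of the per-RR-set subgradient) is bounded by $L_q$ (resp. $|R|L_q$), and there are $d$ of them, so the $\ell_2$ norm picks up a $\sqrt{d}$ rather than a $d$---and (ii) confirming that the product and truncation factors genuinely lie in $[0,1]$ so that they can be discarded in the triangle-inequality bound, which is where the probabilistic range $q_{v,j}\in[0,1]$ is used. I expect (i), the bookkeeping of the dimension factor, to be the main place where an error could slip in.
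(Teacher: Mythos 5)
Your proof is correct and follows essentially the same route as the paper's: reduce everything to the bound $\|\nabla h_v\|_2\le\sqrt{d}\,L_q$ (the paper gets the equivalent statement that $h_v$ is $\sqrt{d}L_q$-Lipschitz by telescoping function-value differences, which also covers non-differentiable $q_{v,j}$), feed that into the gradient formula of Lemma~\ref{lem:generalGradient} for $\hat g_{\cR}$ and $g$, and bound the $\bar g_{\cR}$ term coordinate-wise, picking up the same $\sqrt{d}$ from converting an $\ell_1$-type bound to $\ell_2$. The only cosmetic deviation is that you derive the $n^2\sqrt{d}L_q$ constant for $g$ from the RR-set identity $g(\bx)=n\,\E_R[1-\prod_{u\in R}(1-h_u(\bx))]$ with $|R|\le n$, whereas the paper reuses its earlier direct computation of $\nabla g$ over seed sets; both give the same constant.
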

Combining Lemma~\ref{lem:independentGradient} with Lemma~\ref{lem-concave-upperbound},
	we can conclude our subgradient algorithm based on the upper bound function
	$\bar g_{\cR} + s$:
(a) we use the projected subgradient algorithm with the subgradient of $\bar g_{\cR}$
	given in Eq.\eqref{eq:subgradientbarg};
(b) we set step size $\eta_t = \Delta / (\nu^{(1)}(\cR) n \sqrt{d}L_q\sqrt{t})$;
(c) we use $T = 9(\Delta \nu^{(1)}(\cR) n \sqrt{d}L_q + \lambda L_c)^2/\varepsilon^2$ iterations to get $\varepsilon$ accuracy; and
(d) we set $L_1=L_2 = n^2\sqrt{d}L_q + \lambda L_c$ in $\GradRIS$.
We refer to the full algorithm with the above setting as $\UpperGradRIS$.
The following theorem summarizes the approximation guarantee of $\UpperGradRIS$.

\begin{restatable}{theorem}{thmuppergradris}\label{thm-uppergrad-ris}
    For any $\varepsilon, \ell > 0$, with probability at least $1-\frac{1}{n^\ell}$, $\UpperGradRIS$ outputs a solution $\bx$ that is an $(1-1/e-\varepsilon)$-approximation of the optimal solution $\text{OPT}_{g+s}$ of CIM-BS, i.e.
    $(g+s)(\bx) \ge (1-1/e-\varepsilon)\text{OPT}_{g+s}$.
\end{restatable}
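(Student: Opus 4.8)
The plan is to obtain Theorem~\ref{thm-uppergrad-ris} as a direct corollary of the meta-algorithm guarantee Theorem~\ref{thm-combining-gradient-rrset}, instantiated with the projected subgradient procedure of Lemma~\ref{lem-concave-upperbound} playing the role of the gradient algorithm $\cA$ and with $\alpha = 1-1/e$. The whole argument reduces to checking that three ingredients meet the hypotheses of Theorem~\ref{thm-combining-gradient-rrset}: (i) the subroutine is a genuine $(1-1/e,\varepsilon/3)$-approximate gradient algorithm for $\hat g_{\cR}+s$; (ii) the Lipschitz constants $L_1,L_2$ hard-coded in $\UpperGradRIS$ are valid upper bounds for $g+s$ and $\hat g_{\cR}+s$; and (iii) the running time is $\text{poly}(1/\varepsilon)$.

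First I would verify ingredient (i). Lemma~\ref{lem-concave-upperbound} states exactly that running projected subgradient descent on the concave surrogate $\bar g_{\cR}+s$ to additive accuracy $\varepsilon'$ yields a point $\by$ with $(\hat g_{\cR}+s)(\by)\ge(1-1/e)\max_{\bx\in\cP}(\hat g_{\cR}+s)(\bx)-\varepsilon'$. Setting $\varepsilon'=\varepsilon/3$, this is precisely the definition of a $(1-1/e,\varepsilon/3)$-approximate gradient algorithm for the objective $\hat g_{\cR}+s$ (the subroutine constructs the surrogate $\bar g_{\cR}$ internally from $\cR$ and the $q_{v,j}$'s). For ingredient (iii), the same lemma bounds the iteration count by $(\Delta L_{\bar g+s})^2/\varepsilon'^2$, and each iteration evaluates the finite-sum subgradient of Eq.~\eqref{eq:subgradientbarg}, so the total cost is polynomial in $1/\varepsilon$.

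Next I would settle ingredient (ii) using Lemma~\ref{lem:independentGradient}: $g$ is $n^2\sqrt{d}L_q$-Lipschitz and $\hat g_{\cR}$ is $\nu^{(1)}(\cR)\,n\sqrt{d}L_q$-Lipschitz, and since every RR set satisfies $|R|\le n$ we have $\nu^{(1)}(\cR)\le n$, so $\hat g_{\cR}$ is also $n^2\sqrt{d}L_q$-Lipschitz. Because $c$ is $L_c$-Lipschitz, the budget-saving term $s(\bx)=\lambda(k-c(\bx))$ is $\lambda L_c$-Lipschitz, and hence by the triangle inequality both $g+s$ and $\hat g_{\cR}+s$ are $(n^2\sqrt{d}L_q+\lambda L_c)$-Lipschitz. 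This matches the prescribed value $L_1=L_2=n^2\sqrt{d}L_q+\lambda L_c$ in $\UpperGradRIS$, so the constants fed to $\GradRIS$ are valid upper bounds. With (i)--(iii) in hand, Theorem~\ref{thm-combining-gradient-rrset} with $\alpha=1-1/e$ gives, with probability at least $1-1/n^\ell$, an output $\bx$ satisfying $(g+s)(\bx)\ge(\alpha-\varepsilon)\text{OPT}_{g+s}=(1-1/e-\varepsilon)\text{OPT}_{g+s}$, which is the claim.

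I expect the proof to be essentially a composition of established lemmas, so the only delicate point is confirming that the $(1-1/e)$ factor of Lemma~\ref{lem-concave-upperbound} survives the presence of the additive budget-saving term rather than being degraded. The argument behind that lemma rests on the sandwich $(1-1/e)\bar g_{\cR}\le\hat g_{\cR}\le\bar g_{\cR}$ from Proposition~\ref{prop-concave-upperbound} together with the non-negativity of $s$ on the feasible set $\cP$ (which holds since $c(\bx)\le k$ forces $s(\bx)=\lambda(k-c(\bx))\ge 0$); the latter is what allows $(1-1/e)\bar g_{\cR}+s\ge(1-1/e)(\bar g_{\cR}+s)$ and thereby propagates the coverage-function ratio to the full objective, while $\bar g_{\cR}\ge\hat g_{\cR}$ ensures the surrogate's optimum dominates the true objective's optimum. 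Re-verifying this non-negativity and the domination are the one step I would double-check, though both are routine given the stated results.
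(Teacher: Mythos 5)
Your proposal is correct and follows essentially the same route the paper intends: the paper gives no separate proof of Theorem~\ref{thm-uppergrad-ris} but derives it exactly as you do, by instantiating the meta-guarantee of Theorem~\ref{thm-combining-gradient-rrset} with $\alpha=1-1/e$, using Lemma~\ref{lem-concave-upperbound} to certify the projected-subgradient subroutine as a $(1-1/e,\varepsilon/3)$-approximate gradient algorithm and Lemma~\ref{lem:independentGradient} (with $\nu^{(1)}(\cR)\le n$) to justify $L_1=L_2=n^2\sqrt{d}L_q+\lambda L_c$. The sandwich argument and the non-negativity of $s$ on $\cP$ that you flag as the delicate point are precisely the ingredients the paper uses in its proof of Lemma~\ref{lem-concave-upperbound}, so nothing is missing.
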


\noindent{\bf Total Time Complexity.\ \ }
For the time complexity of $\ProxGradRIS$ and $\UpperGradRIS$, we make the following
	reasonable assumptions:
	(1) the time for sampling a trigger set $T_v\sim D_v$ is proportional to the
		in-degree of $v$; 
	(2) the optimal influence spread $\max_{\bx\in\cP} g(\bx)$ among strategy mixes
		is at least the optimal single node influence spread $\max_{v\in V}\sigma(\{v\})$;
		and
	(3) $\lambda k \le n$, otherwise the budget saving is more important than 
	influencing the entire network, and CIM-BS problem no longer makes much sense.
The following theorem summarizes the time complexity result when 
	 $\cD = \R_{+}^d$ and $c(\bx) = ||\bx||_1$ or $c(\bx) = ||\bx||_2$. 
The more general result is given in \OnlyInShort{\cite{CZZ19}}\OnlyInFull{Appendix~\ref{app:timeComplexity}}.
	Notation $\tilde{O}(\cdot)$ ignores poly-logarithmic factors.
\begin{restatable}{theorem}{thmproximaltimenorm}\label{thm:timesimple}
    Suppose that $\cD = \R_{+}^d$ and $c(\bx) = ||\bx||_1 \text{ or } ||\bx||_2$, 
    $h_v(\bx)$'s are $L_h$-Lipschitz and $\beta_h$-smooth.
    If $\nabla h_v(\bx)$ can be computed in time $T_h$, the expected running time of $\ProxGradRIS$ is bounded by
    $    \tilde O\left(\frac{\beta_h n^2 + L_h^2 n^3}{\varepsilon}\cdot \frac{T_h(m+n)\cdot(d+\ell)}{\varepsilon^2}\right)$.
 Under independent strategy activation, if $q_{v,j}(x_j)$'s are $L_q$-Lipschitz
 and the gradient and function value of $q_{v,l}(x_j)$ can be computed in time $T_q$,
 the expected running time of $\UpperGradRIS$ is bounded by
$ \tilde O\left(\frac{n^4dL^2_q}{\varepsilon^2}\cdot \frac{T_q(m+n)\cdot(d+\ell)}{\varepsilon^2}\right)$.
\end{restatable}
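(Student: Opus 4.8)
The plan is to split the running time of each algorithm into the cost of the $\Sampling$ procedure (Algorithm~\ref{alg-sampling}) and the cost of the final gradient call in $\GradRIS$, and then to bound each by reading off (i) the number of RR sets generated, (ii) the per-iteration cost of the (sub)gradient method, and (iii) the iteration count. First I would control the number of RR sets: the final count $\tilde\theta=\max\{\theta^{(1)},\theta^{(2)}\}$ has the form $\tilde O\!\left(n\cdot(\ell\ln n+\ln\cN(\cP,\cdot))/(\LB\,\varepsilon^2)\right)$, and for $c(\bx)=\|\bx\|_1$ or $\|\bx\|_2$ the covering-number bound $\cN(\cP,\varepsilon)\le(3k/\varepsilon)^d$ turns $\ln\cN$ into $O(d\log(\cdots))$, so the logarithmic factors collapse to $\tilde O(d+\ell)$ and $\tilde\theta=\tilde O\!\left(n(d+\ell)/(\LB\,\varepsilon^2)\right)$.

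To eliminate $\LB$ from the denominator I would lower bound it. Since $c(\bx)\le k$ on $\cP$ we have $s\ge 0$, hence $\mathrm{OPT}_{g+s}\ge\mathrm{OPT}_g\ge\max_v\sigma(\{v\})$ by assumption~(2), and the doubling search maintains the invariant $\LB=\Omega(\mathrm{OPT}_{g+s})$ used in the proof of Theorem~\ref{thm-combining-gradient-rrset}. Combining this with the standard RIS identity $n\,\E[|R|]=\sum_v\sigma(\{v\})\le n\max_v\sigma(\{v\})$ gives the two facts I will use repeatedly: $\E[|R|]\le\mathrm{OPT}_{g+s}$ and $\tilde\theta\cdot\E[|R|]=\tilde O\!\left(n(d+\ell)/\varepsilon^2\right)$, so the $\LB$ and $\mathrm{OPT}$ factors cancel.

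Next I would account for the work done on these RR sets. By assumption~(1) the expected time to generate one random RR set is proportional to the in-degrees of the reached nodes, i.e. $O\!\left(\tfrac{m}{n}\E[|R|]\right)$ per set, so generating all of them costs $\tfrac{m}{n}\cdot\tilde\theta\,\E[|R|]=\tilde O\!\left(m(d+\ell)/\varepsilon^2\right)$, and adding $O(n)$ per root yields the $(m+n)$ term; this one-time cost will be dominated by the final gradient call. Each gradient evaluation of $\hat g_{\cR}$ via Eq.~\eqref{eq:gradienthv} touches every node of every RR set once, costing $O\!\left(T_h\sum_{R\in\cR}|R|\right)$ (respectively $O\!\left(T_q\, d\sum_{R\in\cR}|R|\right)$ for the subgradient of $\bar g_{\cR}$ in Eq.~\eqref{eq:subgradientbarg}, which ranges over the $d$ strategies), whose expectation is $\tilde O\!\left(T_h\,n(d+\ell)/\varepsilon^2\right)\le\tilde O\!\left(T_h(m+n)(d+\ell)/\varepsilon^2\right)$ using $n\le m+n$ together with the bound on $\tilde\theta\,\E[|R|]$. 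The iteration count follows from the settings after Lemmas~\ref{lem:generalGradient} and~\ref{lem:independentGradient}: with $\nu^{(1)}(\cR),\nu^{(2)}(\cR)\le n$, $\Delta=O(k)$, and $\lambda k\le n$ (assumption~(3)), setting~(c) gives $T=\tilde O\!\left((\beta_h n^2+L_h^2 n^3)/\varepsilon\right)$ for $\ProxGradRIS$ and $T=\tilde O\!\left(n^4 d L_q^2/\varepsilon^2\right)$ for $\UpperGradRIS$; multiplying $T$ by the per-iteration cost produces the two stated products, after absorbing $\Delta$, the $\lambda L_c$ Lipschitz/smoothness terms, and lower-order factors into $\tilde O$.

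Finally I would argue the $\Sampling$ loop is lower order: it makes only $O(\log(n+\lambda k))$ calls to $\cA$, each on a prefix $\cR_i$ with $\theta_i\le\tilde\theta$ growing geometrically, so both in-loop generation and in-loop gradient calls are dominated (up to the poly-logarithmic factor absorbed by $\tilde O$) by the single final call. The main obstacle is the expected-time accounting rather than the worst-case count: $\sum_{R\in\cR}|R|$ and the generation time are random and correlated with $\tilde\theta$ (which is itself a function of $\LB$), so I must invoke the Wald-type/martingale argument from the IMM analysis to justify taking expectations termwise and to obtain the concentration that makes $\E[\tilde\theta\,\E[|R|]]$ behave like $\tilde\theta\cdot\E[|R|]$. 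Pinning down that the $\mathrm{OPT}$ factors cancel exactly there, and verifying which of $\Delta$, $d$, and $\lambda L_c$ are genuinely absorbed under assumption~(3) so that the exponents match the stated products, is where the care is required.
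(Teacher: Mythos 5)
Your proposal is correct and follows essentially the same route as the paper's proof: the paper likewise bounds $\E[\theta^{(1)}+\theta^{(2)}+\theta_{i_{ret}}]=O\bigl(n\ln(n^{\ell}\cN(\cdot))/(\varepsilon^{2}\,\text{OPT}_{g+s})\bigr)$ via a careful bound on $\E[1/x_{ret}]$, cancels the $\text{OPT}$ factor against $\text{EPT}=\frac{m}{n}\E_{\tilde v}[\sigma(\tilde v)]$ using Assumption~(2) and the martingale stopping-time theorem, multiplies by the iteration counts $O((\beta_h n^{2}+L_h^{2}n^{3})/\varepsilon)$ and $O(n^{4}dL_q^{2}/\varepsilon^{2})$, and finally instantiates $\cN(\cP,\varepsilon)\le(3k/\varepsilon)^{d}$ to get the $(d+\ell)$ factor. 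The one point you flag as needing care — that $\LB=\Omega(\text{OPT}_{g+s})$ only holds with high probability — is exactly what the paper's Lemma on $\E[1/x_{ret}]$ handles by showing the failure event contributes only a lower-order term.
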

From the time complexity result, we can see that the two gradient algorithms still have
	high-order dependency on the graph size.
This is mainly because we need the conservative bounds on the number of gradient algorithm
	iterations for the theoretical guarantee
	(terms $\frac{\beta_h n^2 + L_h^2 n^3}{\varepsilon}$ and $\frac{n^4dL^2_q}{\varepsilon^2}$).
In our actual algorithms, we already use $\nu^{(1)}(\cR)$ and $\nu^{(2)}(\cR)$ instead of
	$n$ and $n^2$ in the upper bound of the gradient decent steps 
	for $\hat{g}_{\cR} + s$, so our actual performance
	would be reduced by corresponding factors.
For details, please see \OnlyInShort{\cite{CZZ19}}\OnlyInFull{Appendix~\ref{app:timeMoments}} for the results and 
	discussions on using the moments of RR set size in the time complexity bounds.


\OnlyInFull{
\section{Experiments}
\label{app:experiment}

\begin{figure*}[!ht]
    \centering
    \subfloat[$c(\bx)=||\bx||_1$,  $\lambda=5$]{\includegraphics[width=2.6in]{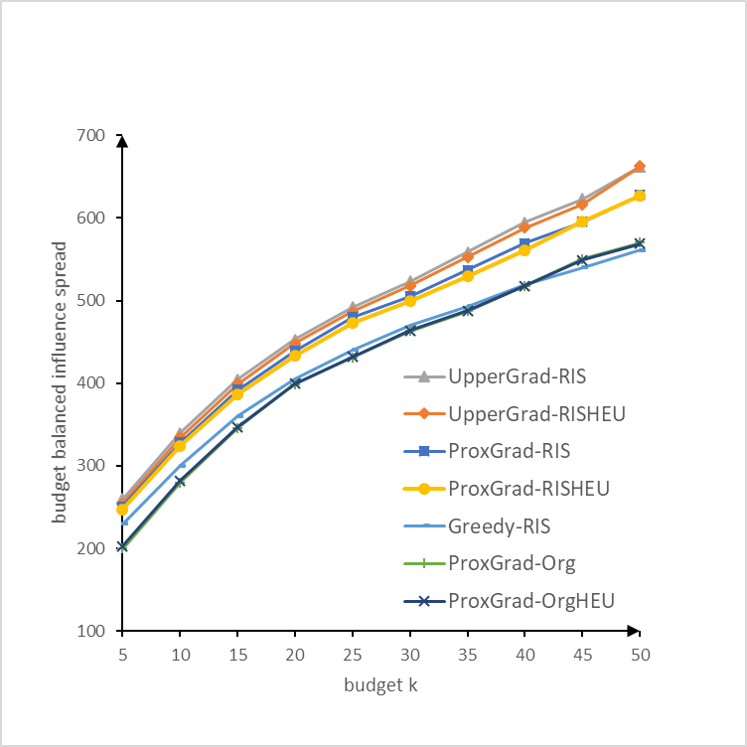}}\quad
    \subfloat[$c(\bx)=||\bx||_1$, $k=50$]{\includegraphics[width=2.6in]{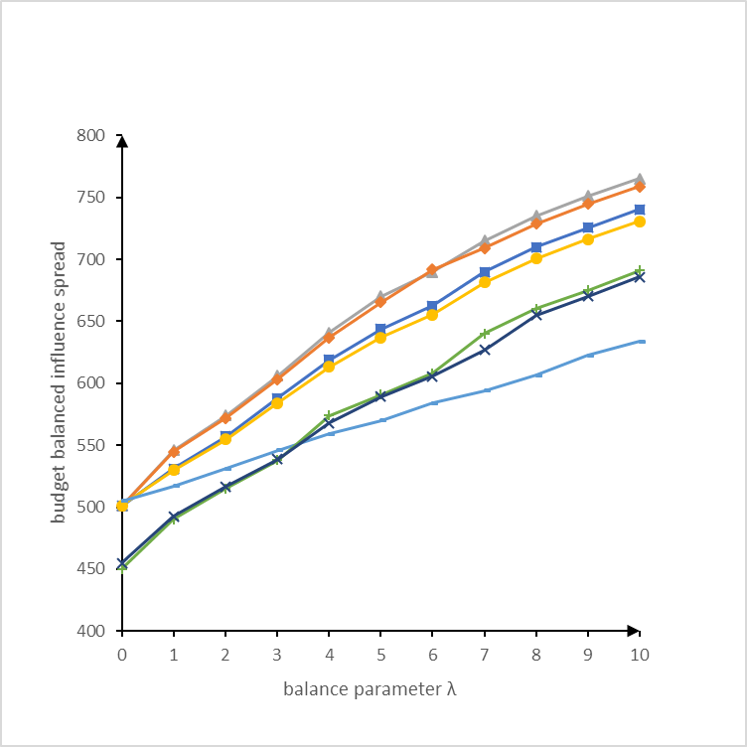}}\quad
    
    \subfloat[$c(\bx)=||\bx||_2$, $\lambda=50$]{\includegraphics[width=2.6in]{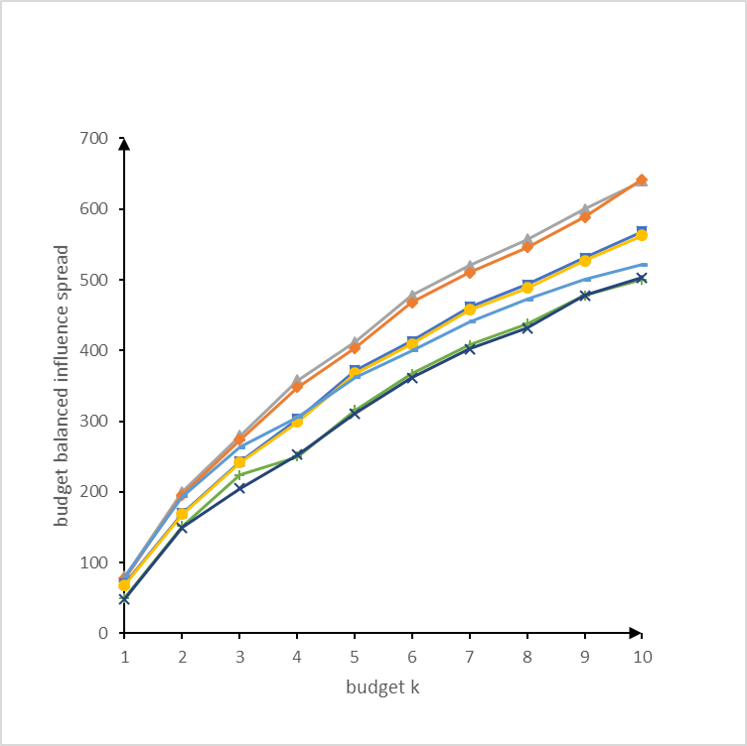}} \quad
    \subfloat[$c(\bx)=||\bx||_2$, $k=5$]{\includegraphics[width=2.6in]{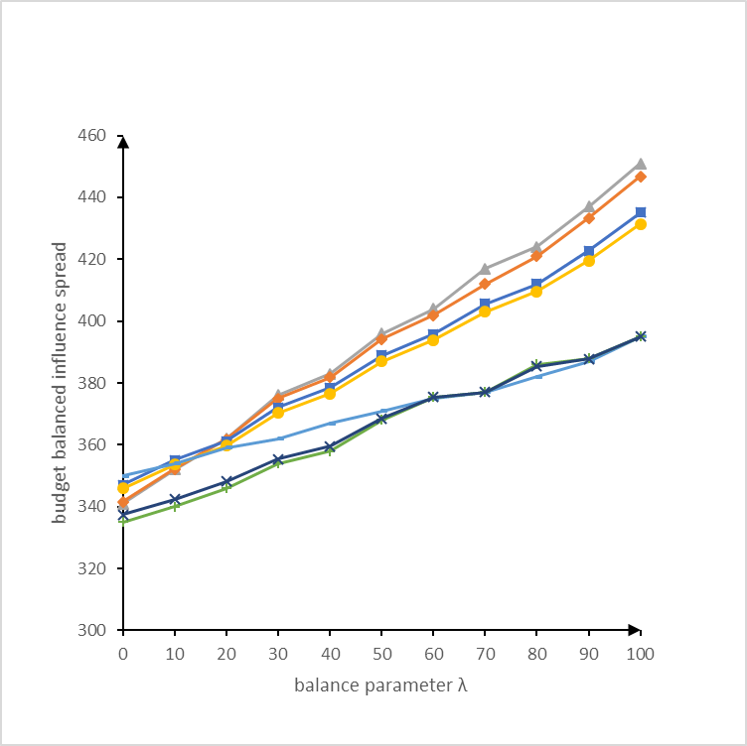}}
    \caption{Budget balanced influence spread results for the personalized marketing scenario on the DM dataset. 
        The legends shown in (a) apply to all other figures.
    } \label{fig:spread}
\end{figure*}

\begin{figure*}[!ht]
    \centering
    \subfloat[$c(\bx)=||\bx||_1$,  $\lambda=5$]{\includegraphics[width=2.6in]{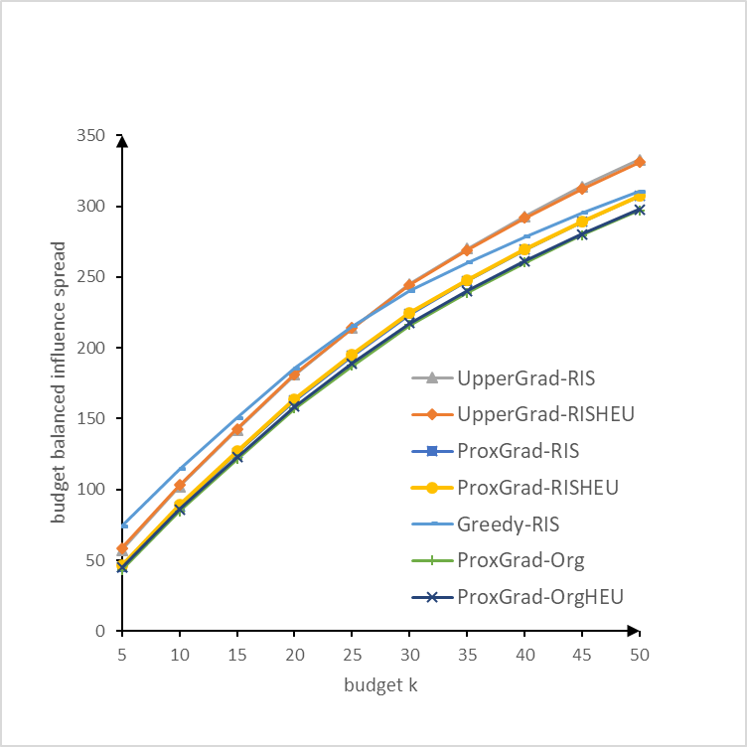}}\quad
    \subfloat[$c(\bx)=||\bx||_1$, $k=50$]{\includegraphics[width=2.6in]{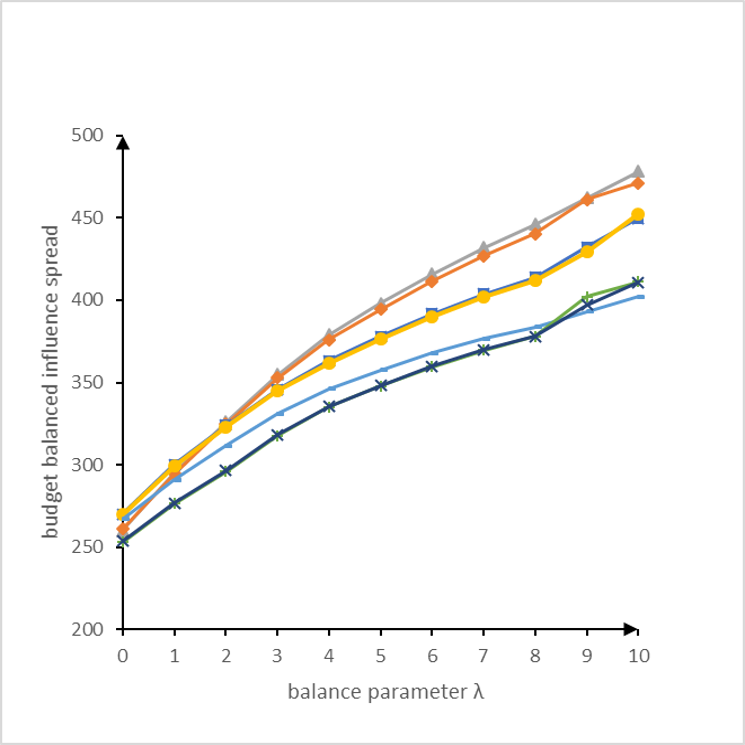}}\quad
    
    \subfloat[$c(\bx)=||\bx||_2$, $\lambda=50$]{\includegraphics[width=2.6in]{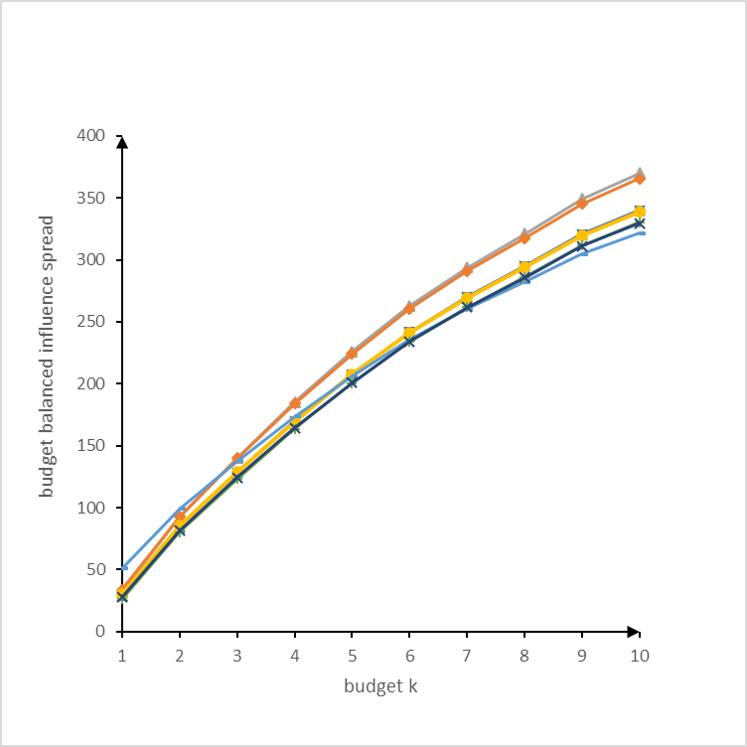}} \quad
    \subfloat[$c(\bx)=||\bx||_2$, $k=5$]{\includegraphics[width=2.6in]{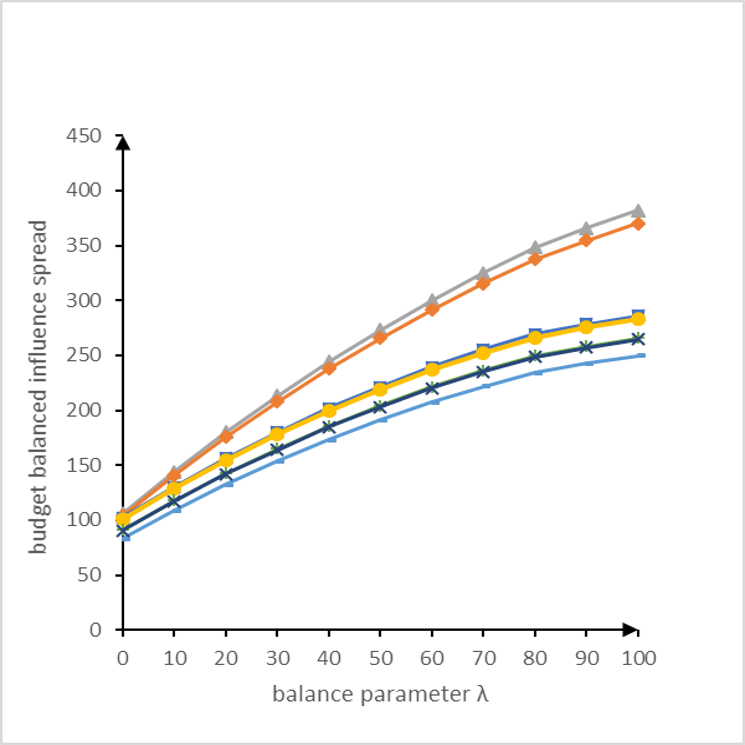}}
    \caption{Budget balanced influence spread results for the segment marketing scenario on the DM dataset. 
        The legends shown in (a) apply to all other figures.
    } \label{fig:eventspread}
\end{figure*}

\noindent{\bf Experiment setup.\ \ }
We test on two network dataset. The first dataset is the DM network, which is a network of data mining researchers
extracted from the ArnetMiner archive (arnetminer.org),
with  $679$ nodes and $3,374$ edges, and edge weights are learned from
a topic affinity model and obtained from the authors \cite{TangSWY09}.
The second dataset is NetHEPT, a popular dataset used in many influence maximization studies (e.g.~\cite{ChenWY09,WCW12,tang15}).
It is an academic collaboration network from the ``High Energy Physics Theory'' section of arXiv from 1991 to 2003, 
where nodes represent the authors and each edge represents one paper co-authored by two nodes. 
After removing duplicated edges, we have $15,233$ nodes and $62,774$ directed edges. 
The influence probabilities on edges are assigned according to the weighted cascade setting~\cite{kempe03journal}: the influence probability of edge $(u,v)$ 
	is $1/d_v$, where $d_v$ is the in-degree of $v$.

Besides our $\ProxGradRIS$ and $\UpperGradRIS$ algorithms, we test two more
algorithms: (a) $\ProxGradOrg$: stochastic proximal gradient algorithm on the
original objective function, and the stochastic gradient computation as well 
as step size and step count settings are given in Appendix~\ref{app:original}.
By Theorem~\ref{thm-proximal-grad}, $\ProxGradOrg$ would achieve $1/2$ approximation
in expectation. 
(b) $\GreedyRIS$: simply replace the gradient algorithm $\cA$ in $\GradRIS$ 
with the greedy algorithm for the objective 
$\hat{g}_{\cR}(\bx) + s(\bx)$ on generated RR sets $\cR$, and 
the greedy algorithm stops either when the budget
is exhausted or the marginal gain is negative. This is similar
to the algorithm in~\cite{WYC18}, but since $\hat{g}_{\cR}(\bx) + s(\bx)$ is
neither monotone nor DR-submodular, $\GreedyRIS$ has no theoretical guarantee and it
    is only a heuristic algorithm for our tests.
For the three gradient-based algorithms $\ProxGradRIS$, $\UpperGradRIS$, and $\ProxGradOrg$,
    we further test their heuristic versions that may lead to faster running time:
    instead of using a conservative number of iteration steps for theoretical guarantees,
    we heuristically terminate the gradient iteration if the difference in the
    objective function values 
    for two consecutive iterations is within a small value of $0.3$ (we will justify the choice of this
    parameter in our tests). 
We put suffix $\sf HEU$ for the three versions of the heuristic gradient termination algorithms.

For parameter settings, we set $\varepsilon=0.3$ and $\ell=1$ for all algorithms.
For $\GreedyRIS$, we set the greedy step size to be $0.1$ on each dimension.
For 1-norm cost function ($c(\bx) = ||\bx||_1$), we test
    (a) vary $k$ from $5$ to $50$ while keeping $\lambda=5$, and
    (b) vary $\lambda$ from $0$ to $10$ while keeping $k=50$.
For 2-norm cost function ($c(\bx) = ||\bx||_2$), we test
(c) varying vary $k$ from $1$ to $10$ while keeping $\lambda=50$, and
(d) vary $\lambda$ from $0$ to $100$ while keeping $k=5$.
The reason we use a smaller budget $k$ for 2-norm cost function is because
    $||\bx||_2 \le ||\bx||_1/\sqrt{d}$, and thus we need a significantly small budget
    for 2-norm in order to have a similar feasible region.
Parameter $\lambda$ is adjusted accordingly so that $\lambda\cdot k$ is at the same
scale as the influence spread, otherwise either influence spread or budget saving is dominant, and the
problem is degenerated.
For functions $h_v(\bx)$, we test two cases: the personalized marketing case and the segment marketing case~\cite{YangMPH16,WYC18}. 
In the personalized marketing case, each node $v$ 
receives a separate discount $x_v\in [0,1]$.
This corresponds to the independent strategy activation case with $d=n$, and
$q_{v,j}(x_j) >0$ only when $j=v$.
We set $q_{v,v}(x_v) = 2x_v - x_v^2$ as in~\cite{YangMPH16,WYC18}. 
In the segment marketing case, we have 10 strategies in total, i.e $d=10$, each strategy 
targets to a disjoint segment of users, and
each user has exactly one corresponding strategy.
Each user is randomly put into one of the 10 user segments with equal probability, 
    and if one segment in the end has less than 50 or larger than 80 users, 
    we regenerate the user segments, so that in the end all segments have sizes within 50 to 80. 
    
The experiments are run on a Ubuntu 17.04 server machine with 2.9GHz and 128GB  memory.
The code is written in C++ and compiled by g++.

\noindent{\bf Experimental results.\ \ }
We first show the results on the DM dataset.
Figure~\ref{fig:spread} shows the influence spread results of the personalized marketing scenario, and Figure~\ref{fig:eventspread} shows the influence results of 
    the segment marketing scenario, both on the DM dataset.
Each data point on an influence spread curve is the average of five solutions found by
    five runs of the same algorithm, and
    the influence spread of each solution is 
    an average of 1000 simulation runs.
In all cases, $\UpperGradRIS$/$\UpperGradRISHEU$ has the best performance, followed by
$\ProxGradRIS$/$\ProxGradRISHEU$, which coincides with our theoretical analysis that
$\UpperGradRIS$ has a better theoretical guarantee.
Both algorithms outperform two baselines in most cases, especially when $\lambda$ is
getting large.
Large $\lambda$ indicates that we need to pay more attention to budget saving, and thus
the result suggests that our algorithm
handles much better in the balance between influence spread and budget saving.
Comparing the heuristic termination version of each gradient-based algorithm with their
    corresponding theory-guided termination, the heuristic versions almost match the theoretical version in influences spread in all cases, showing that the heuristic termination seems to perform well in practice.

\begin{table}[h]
    \centering
    \small
    \caption{Running time results for the personalized marketing scenario on the DM dataset (in seconds).}
    \label{tab:time}
    \begin{tabular}{|l|r|r|}
        \hline
        &    $c(\bx) = ||\bx||_1$, $k=50$ and $\lambda=5$    & $c(\bx) = ||\bx||_2$, $k=5$ and $\lambda=50$ \\
        \hline
        $\ProxGradRIS$    & 33.2    & 27.3 \\ 
        \hline
        $\ProxGradRISHEU$ &    6.5 & 5.5  \\   
        \hline
        $\UpperGradRIS$ &    81.8 & 72.3  \\   
        \hline
        $\UpperGradRISHEU$ &    13.2 & 13.9  \\   
        \hline
        $\GreedyRIS$ &    10.2 & 8.7  \\   
        \hline
        $\ProxGradOrg$ &    1043.9 & 1021.6  \\   
        \hline
        $\ProxGradOrgHEU$ &    243.4 & 187.8  \\   
        \hline
    \end{tabular}
\end{table}

\begin{table}[h]
    \centering
    \small
    \caption{Running time results for the segment marketing scenario on the DM dataset (in seconds).}
    \label{tab:eventtime}
    \begin{tabular}{|l|r|r|}
        \hline
        &    $c(\bx) = ||\bx||_1$, $k=50$ and $\lambda=5$    & $c(\bx) = ||\bx||_2$, $k=5$ and $\lambda=50$ \\
        \hline
        $\ProxGradRIS$    & 0.58    & 0.46 \\ 
        \hline
        $\ProxGradRISHEU$ &    0.12 & 0.11  \\   
        \hline
        $\UpperGradRIS$ &    1.9 & 2.4  \\   
        \hline
        $\UpperGradRISHEU$ &    0.32 & 0.77  \\   
        \hline
        $\GreedyRIS$ &    0.12 & 0.13  \\   
        \hline
        $\ProxGradOrg$ &    28.6 & 19.2  \\   
        \hline
        $\ProxGradOrgHEU$ &    4.8 & 4.3  \\   
        \hline
    \end{tabular}
\end{table}
    
Table~\ref{tab:time} shows the running time of the personalized marketing scenario,
and Table~\ref{tab:eventtime} shows the running time of the segment marketing scenario,
Each running time
number is the average of five runs. 
The result shows that 
$\ProxGradRIS$ and $\UpperGradRIS$ are $9$ to $49$ times faster than $\ProxGradOrg$.
This is mainly due to the high variance in the stochastic gradient for the original
    objective function, as we discussed before.
Moreover, $\ProxGradRIS$ and $\UpperGradRIS$ is slower than $\GreedyRIS$.
This is mainly because our conservative bounds on the number of gradient iterations
    make $\ProxGradRIS$ and $\UpperGradRIS$ slow, while
    $\GreedyRIS$ only use the heuristic greedy approach with step size $0.1$
    without any theoretical guarantee.
Indeed $\GreedyRIS$ is inferior to $\ProxGradRIS$ and $\UpperGradRIS$ in terms
    of the influence spread achieved. 
    
The heuristic termination significantly improves the running time. Comparing against their
    respective theory-guided termination counterparts, we can see that heuristic termination
    in general improves the running time for 5 --- 7 times. 
Comparing against the $\GreedyRIS$ algorithm, we can see that 
    $\ProxGradRISHEU$ is now faster than $\GreedyRIS$ and
    $\UpperGradRISHEU$ is close to $\GreedyRIS$ in running time.
Therefore, this means that our gradient-based algorithms could achieve faster running time
    with heuristic termination while still providing better influence spread quality
    than the greedy heuristic, and if we want a theoretical guarantee, we could use
    more conservative theory-guided termination, which runs a few times slower but
    provides both theoretical guarantee and best empirical performance on influence spread.

\begin{figure*}[!ht]
	\centering
	\subfloat[$c(\bx)=||\bx||_1$,  $\lambda=10$]{\includegraphics[width=2.6in]{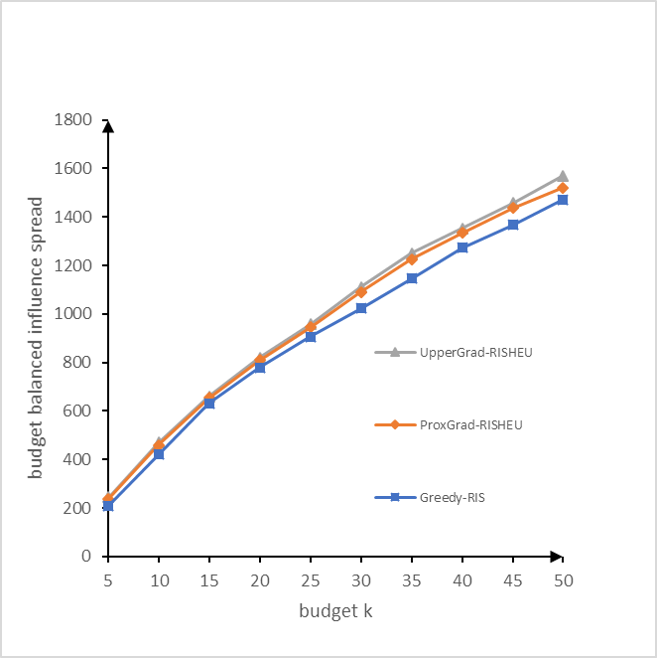}}\quad
	\subfloat[$c(\bx)=||\bx||_1$, $k=50$]{\includegraphics[width=2.6in]{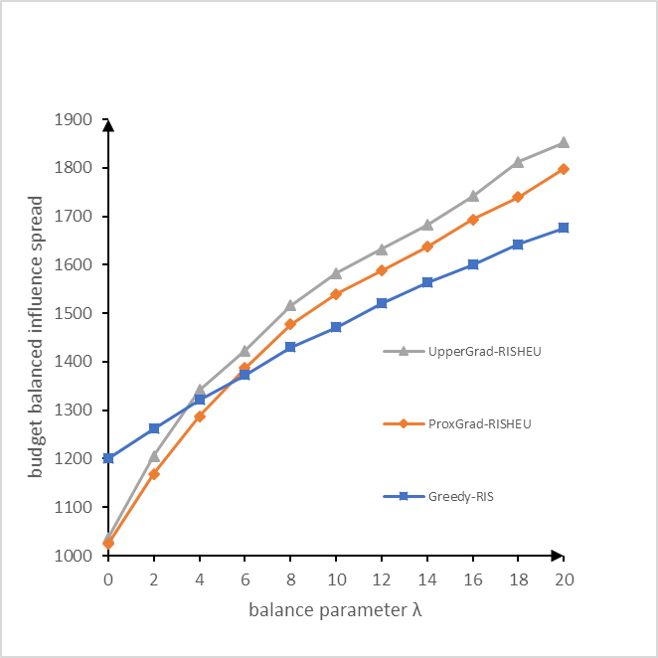}}
	\caption{Budget balanced influence spread results for the personalized marketing scenario on the NetHEPT dataset. 
	} \label{fig:HEPTspread}
\end{figure*}

Next, we test on the larger dataset NetHEPT. On this larger dataset, the gradient algorithms with theoretical guarantee is too slow to run, so we
	only run the heuristic versions of the algorithms and comparing them with the heuristic greedy algorithm. 
Figure~\ref{fig:HEPTspread} show the result of the budget balanced influence spread versus $k$ and $\lambda$ respectively,
	for the case of the personalized marketing scenario.
We use basically the same parameter settings as in the DM dataset, except that we try large $\lambda$ values (e.g. $\lambda=10$ instead of
	$\lambda=5$ as in the DM dataset when varying $k$), because NetHEPT dataset has larger influence spread, and we need a larger
	value of $\lambda$ to balance that.
From the result, we can see that the $\UpperGradRISHEU$ and $\ProxGradRISHEU$ still perform better than the greedy heuristic.
Moreover, the advantage is larger when the balance parameter $\lambda$ is getting large, similar to the results we see on the
	DM dataset.
Table~\ref{tab:HEPTtime} reports the running time of the algorithms on the NetHEPT dataset.
We see that $\UpperGradRISHEU$ and $\ProxGradRISHEU$ are a few times slower than the greedy heuristic.

\begin{table}[h]
	\centering
	\small
	\caption{Running time results for the personalized marketing scenario on the NetHEPT dataset (in seconds).}
	\label{tab:HEPTtime}
	\begin{tabular}{|l|r|}
		\hline
		&    $c(\bx) = ||\bx||_1$, $k=50$ and $\lambda=10$  \\
		\hline
		$\ProxGradRISHEU$ &    964.1    \\   
		\hline
		$\UpperGradRISHEU$ &    1522.7   \\   
		\hline
		$\GreedyRIS$ &    334.0   \\   
		\hline
	\end{tabular}
\end{table}

Besides looking at the budget balanced influence spread $g(\bx)+s(\bx)$ as a whole, we would also like to decompose this overall objective into the influence spread $g(\bx)$ and budget saving $s(\bx)$ and see how each of them behaves, especially when $\lambda$ changes.
Figure~\ref{fig:othertests} (a) shows this test result on the DM dataset with $k=50$ and $c(\bx)=||\bx||_1$, focusing on the $\UpperGradRIS$ and $\GreedyRIS$ algorithms.
The result shows that when $\lambda$ increases, the influence spread objective $g(\bx)$ in general decreases while the budget saving objective $s(\bx)$ increases, 
	indicating that both algorithms lean towards budget saving when more weight is put on budget saving.
Comparing the two algorithms, we clearly see that $\UpperGradRIS$ put much more emphasis on budget saving than $\GreedyRIS$, with budget saving objective $s(\bx)$ more 
	than doubled. 

For the heuristic version of our gradient algorithms, we further verify the stopping criteria parameter $0.3$ that we use. 
To do so, we vary this parameter from $0.1$ to $1$ and see the result comparing to the theory-guided version of the algorithms.
Figure~\ref{fig:othertests} (b) shows this test result on the DM dataset with $k=50$, $\lambda=5$ and $c(\bx)=||\bx||_1$.
The result shows that in general before $0.3$ or $0.4$, the performance of our heuristic algorithms match very closely with the theory-guided versions, but when 
	the parameter increases to $0.5$ or above, the performance of the heuristic algorithms starts to drop significantly.
Therefore, in our main experiments, we set this parameter to $0.3$.
   
   \begin{figure*}[!ht]
   	\centering
   	\subfloat[Decomposition of the objective function]{\includegraphics[width=2.6in]{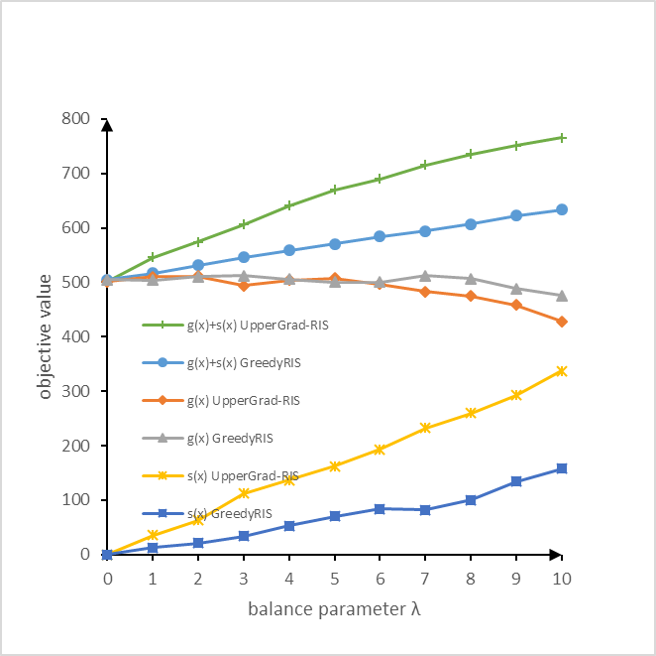}}\quad
   	\subfloat[Stopping criteria parameter tuning]{\includegraphics[width=2.6in]{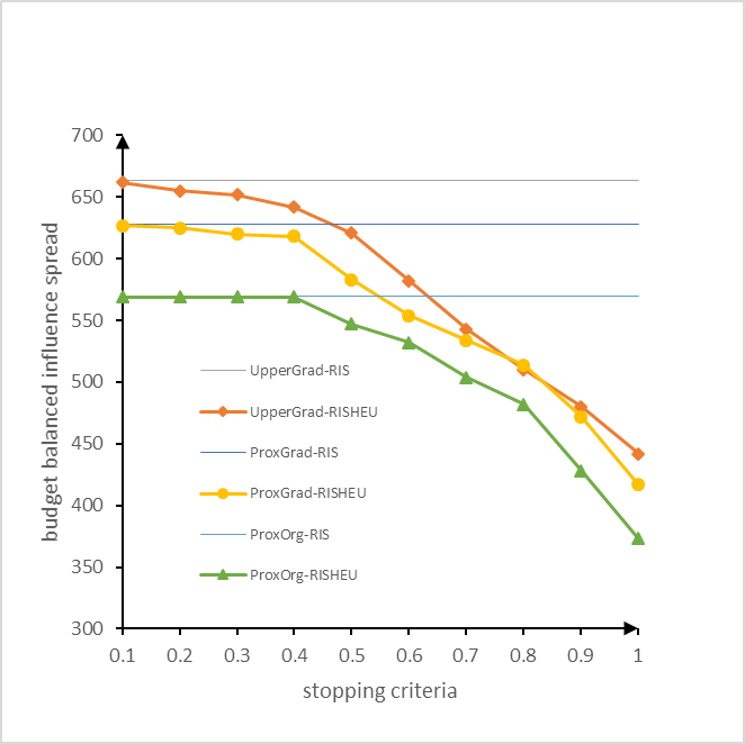}}
   	\caption{Decomposition of the objective function, and stopping criteria parameter tuning for the heuristic version of algorithms, on the DM dataset. 
   		We use $c(\bx)=||\bx||_1$, $k=50$, and for (b) $\lambda=5$.
   	} \label{fig:othertests}
   \end{figure*}

Finally, we collect the statistics for the first three moments of the average RR set sizes,
    which are closely related to the running time of the gradient-based algorithms, 
    as discussed at the end of Section~\ref{sec:gradientAlgo} and shown in
    Theorems~\ref{thm-proximal-time-heu} and~\ref{thm-concave-upperbound-time-heu}.
In particular, by random sampling 10,000 RR sets and taking the average, we obtain
    $\nu^{(1)} = \E[|R|] =7.2$, $\nu^{(2)}= \E[|R|^2]= 62.9$, $\nu^{(3)}= \E[|R|^3] = 501.4$.
Following the remark after Theorems~\ref{thm-proximal-time-heu}, we can see that
    without using these moments in the time complexity bound, we would have relaxed the
    bound for a large factor.
In particular, according to the remark after Theorems~\ref{thm-proximal-time-heu}, 
    the relaxation factors for various components of computations are:
    $n / \nu^{(1)} =94$, $n^2/\nu^{(2)} = 7,330$, $\nu^{(1)}\cdot n / \nu^{(2)} = 78$, and
    $\nu^{(1)} \cdot n^2 / \nu^{(3)} = 6,620$.
This suggests that using moments of mean RR set size would significantly reduce the
    time complexity bound.

}
\OnlyInShort{
\input{aaai_experiments_short.tex}
}

\section{Conclusion and Further Work}
In this paper, we tackle the new problem of continuous influence maximization with
	budget saving (CIM-BS), whose objective function is neither monotone, nor
	DR-submodular or concave.
We use the gradient method to solve CIM-BS, and provide innovative 
	integration with the reverse influence sampling method to achieve theoretical
	approximation guarantees. 
One important direction of future study is to make the gradient method more scalable,
	which requires more detailed study of convergence behavior and properties of the gradient
	method in the influence maximization domain.
Another direction is to investigate if the gradient method can be applied to other
	influence maximization settings such as competitive influence maximization.
Gradient method is a rich and powerful approach that has been already applied to many
	application domains, and thus we hope our work could inspire more studies 
	incorporating the gradient method into the influence maximization research.



\OnlyInShort{
\bibliographystyle{aaai}
}
\bibliography{bibdatabase}

\OnlyInFull{
\newpage

\appendix

\section*{Appendix}

\section{Proof of Theorem \ref{thm-combining-gradient-rrset}}\label{appendix-proof-thm-combining-gradient-rrset}
In this section, we present the detailed proof of Theorem \ref{thm-combining-gradient-rrset}.
The proof follows the structure of the proof of IMM~\cite{tang15}.

 Our analysis is based on a version of Chernoff bound, which is shown as follow. For convenience, we will use the notation $\text{OPT}_g$ to denote the maximum value of $g$ and $\text{OPT}_{g+s}$ to denote the maximum value of $g+s$ in the set $\cP$, and we use $\bx^*_g$ to denote the solution when maximizing the function $g$ in the set $\cP$, i.e. $g(\bx^*_g) = \text{OPT}_g$. We will also use $\bx^*_{g+s}$ to denote the point maximizing $g+s$ in the set $\cP$, i.e. $g(\bx^*_{g+s}) = \text{OPT}_{g+s}$.

\begin{proposition}[Chernoff Bound~\cite{tang15}]\label{prop-chernoff}
    Let $X_1,X_2,\dots,X_t$ be $t$ independent random variable with support $[0,1]$, and let $\E[X_i] = \mu$ for all $i\in [t]$. Let $Y = \sum_{i=1}^tX_i$, we have for any $\gamma > 0$,
    \[\Pr\{Y - t\mu \ge \gamma \cdot t\mu \} \le \exp\left(-\frac{\gamma^2}{2+\frac{2}{3}\gamma}t\mu\right).\]
    For any $0 < \gamma < 1$, we have
    \[\Pr\{Y - t\mu < -\gamma \cdot t\mu \}\le \exp\left(-\frac{\gamma^2}{2}t\mu\right).\]
\end{proposition}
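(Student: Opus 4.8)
The plan is to prove both tail bounds by the standard exponential-moment (Chernoff) method: each reduces to a one-parameter optimization of the moment generating function, followed by an elementary analytic inequality that recovers the stated exponents.

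For the upper tail, I would first apply the exponential Markov inequality: for any $s > 0$, $\Pr\{Y \ge (1+\gamma)t\mu\} = \Pr\{e^{sY} \ge e^{s(1+\gamma)t\mu}\} \le e^{-s(1+\gamma)t\mu}\,\E[e^{sY}]$. Independence gives $\E[e^{sY}] = \prod_{i=1}^t \E[e^{sX_i}]$. The crucial per-variable estimate exploits the support condition $X_i \in [0,1]$: since $x \mapsto e^{sx}$ is convex, $e^{sX_i} \le 1 - X_i + X_i e^{s}$ pointwise on $[0,1]$, so taking expectations yields $\E[e^{sX_i}] \le 1 + \mu(e^s-1) \le \exp(\mu(e^s-1))$, where the last step uses $1+z \le e^z$. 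Hence $\E[e^{sY}] \le \exp(t\mu(e^s-1))$, and the tail is at most $\exp(t\mu(e^s-1) - s(1+\gamma)t\mu)$. Minimizing the exponent over $s>0$ gives $s = \ln(1+\gamma)$ and the classical estimate $\Pr\{Y \ge (1+\gamma)t\mu\} \le \exp(-t\mu[(1+\gamma)\ln(1+\gamma) - \gamma])$.

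For the lower tail I would run the identical argument with $s<0$ (equivalently, bound the MGF of $-Y$); the per-variable convexity bound is unchanged and remains valid for negative $s$. Using $\Pr\{Y \le (1-\gamma)t\mu\} \le e^{-s(1-\gamma)t\mu}\,\E[e^{sY}]$ and the same MGF bound, the exponent $(e^s-1) - s(1-\gamma)$ is minimized at $s = \ln(1-\gamma) < 0$ (valid since $0<\gamma<1$), yielding $\Pr\{Y \le (1-\gamma)t\mu\} \le \exp(-t\mu[(1-\gamma)\ln(1-\gamma) + \gamma])$.

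The main obstacle — and the only nonmechanical part — is converting these sharp but unwieldy exponents into the clean forms stated in the proposition. For the upper tail I would establish the elementary inequality $(1+\gamma)\ln(1+\gamma) - \gamma \ge \frac{\gamma^2}{2 + \frac{2}{3}\gamma}$ for all $\gamma > 0$, and for the lower tail $(1-\gamma)\ln(1-\gamma) + \gamma \ge \frac{\gamma^2}{2}$ for $0<\gamma<1$. Each follows the same calculus template: define $\phi(\gamma)$ as the left side minus the right side, verify $\phi(0)=0$, and show $\phi'(\gamma) \ge 0$ on the relevant range (e.g., via a series comparison for $\ln(1\pm\gamma)$, or by clearing denominators and checking the sign of the resulting polynomial). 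Substituting these two inequalities into the respective exponents gives exactly $\exp(-\frac{\gamma^2}{2+\frac{2}{3}\gamma}t\mu)$ and $\exp(-\frac{\gamma^2}{2}t\mu)$, completing the proof.
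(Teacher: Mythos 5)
Your proposal is correct and complete. Note that the paper itself offers no proof of this proposition: it is imported verbatim from Tang et al.\ (the IMM paper) as a black-box concentration tool, so there is no internal argument to compare against. Your derivation is the standard exponential-moment route, and every step checks out: the pointwise convexity bound $e^{sX}\le 1-X+Xe^{s}$ is valid for all real $s$ on support $[0,1]$ (so the lower-tail case with $s<0$ is legitimate), the optimizing choices $s=\ln(1+\gamma)$ and $s=\ln(1-\gamma)$ give the sharp exponents $(1+\gamma)\ln(1+\gamma)-\gamma$ and $(1-\gamma)\ln(1-\gamma)+\gamma$, and both comparison inequalities you invoke are true and provable exactly by your calculus template: for the lower tail, $\phi'(\gamma)=-\ln(1-\gamma)-\gamma\ge 0$ immediately; for the upper tail, combining $\ln(1+\gamma)\ge \frac{2\gamma}{2+\gamma}$ with the polynomial check $3(6+\gamma)(2+\gamma)\le(6+2\gamma)^2$ shows $\phi'\ge 0$. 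The strict inequality in the stated lower tail is harmless since the strict event is contained in the non-strict one. One contextual remark: the cited source actually establishes this bound in the stronger martingale setting (IMM needs it because its RR sets are generated adaptively), whereas your proof uses full independence; for the proposition as literally stated, independence is assumed, so your more elementary argument suffices, but it would not transfer unchanged to the martingale generalization used elsewhere in that literature.
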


Recall that we want to optimize the function $(g+s)(\bx)$ in the set $\cP$, where $h(\bx) \ge 0$ for all $\bx\in \cP$, and $g(\bx)$ is the influence of the network with strategy $\bx$ and $g(\bx)$, and we use
\[\hat g_{\cR}(\bx) = \frac{n}{\theta}\sum_{R\in\cR}\left(1-\prod_{v\in R}(1-h_v(\bx))\right)\]
to approximate $g(\bx)$.

Then given the Chernoff bound, our proof comes as follow. We first fix the number of generated independent RR-sets $\theta = |\cR|$ and we assume that we have an lower bound $\text{LB}$ for the optimal value $\text{OPT}_{g+s}$. We first show that with the randomness of the generated RR-sets, with high probability, optimizing the function $(\hat g_{\cR}+s)$ will lead to a guaranteed approximation of the function $(g+s)$. (Lemma \ref{lem-direct-chernoff},\ref{lem-approx-ratio},\ref{lem-approx-ratio-with-parameter}) Then we show that with high probability, the Sampling Procedure(Algorithm \ref{alg-sampling}) will return a lower bound $\text{LB}$ for the optimal value $\text{OPT}_{g+s}$.

\begin{lemma}\label{lem-direct-chernoff}
    Given a constant $0 < \alpha' < 1$. For any $\varepsilon > 0$, any $0<\alpha'\varepsilon_1<  \varepsilon/3$, and any $\delta_1,\delta_2>0$. If we have an lower bound  $\text{LB}$ for the optimal value $\text{OPT}_{g+s}$, let
    \[\theta^{(1)} = \frac{2n\cdot \ln\left(\frac{1}{\delta_1}\right)}{ \varepsilon_1^2\cdot \text{LB}}, \quad \theta^{(2)}(\cN) = \frac{2\alpha'\cdot n\cdot\ln\left(\frac{\cN}{\delta_2}\right)}{(\varepsilon/3 - \alpha' \varepsilon_1)^2\text{LB}},\]
    where $\cN$ is a variable.
    Recall that $\bx^*_{g+s} = \argmax_{\bx\in\cP}(g(\bx)+s(\bx))$, then for any fixed $|\cR| = \theta \ge \theta^{(1)}$, we have
    \[\Pr\{\hat g_{\cR}(\bx^*_{g+s}) + h(\bx^*_{g+s}) < (1-\varepsilon_1)\cdot  \text{OPT}_{g+s}\} \le \delta_1.\]
    For any fixed $|\cR| = \theta \ge \theta^{(2)}(\cN)$ and any fixed possible solution $\bx\in\cP$ that satisfies $g(\bx) + h(\bx) < (\alpha' - \varepsilon/3)\text{OPT}_{g+s} - T$ where $T\ge 0$ is a constant, we have
    \[\Pr\left\{\hat g_{\cR}(\bx)+s(\bx) \ge \alpha'(1-\varepsilon_1)\cdot\text{OPT}_{g+s}-T\right\} \le \frac{\delta_2}{\cN}.\]
\end{lemma}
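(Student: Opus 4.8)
The plan is to reduce both statements to the Chernoff bound of Proposition~\ref{prop-chernoff} applied to the per-RR-set indicator variables, exploiting that $s$ is deterministic and non-negative on $\cP$. Concretely, for a fixed $\bx$ and the $i$-th RR set $R_i$, set $X_i = 1 - \prod_{v\in R_i}(1-h_v(\bx))$. Since each $h_v(\bx)\in[0,1]$, we have $X_i\in[0,1]$, the $X_i$ are i.i.d., and by the RIS identity $\E[X_i] = g(\bx)/n =: \mu$, so $\hat g_{\cR}(\bx) = \frac{n}{\theta}\sum_{i=1}^\theta X_i$ is an unbiased estimator of $g(\bx)$. Writing $Y = \sum_i X_i$, a deviation of $\hat g_{\cR}(\bx)$ from $g(\bx)$ by an additive amount $\Delta_0$ becomes a deviation of $Y$ from its mean $\theta\mu$ by $\frac{\theta}{n}\Delta_0$, which is exactly what the two tails of Proposition~\ref{prop-chernoff} control. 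Throughout I will use that $s(\bx)\ge 0$ (because $c(\bx)\le k$ and $\lambda\ge 0$ on $\cP$) and that $g(\bx)\le \text{OPT}_{g+s}$ (indeed $g(\bx)\le\text{OPT}_g\le\text{OPT}_{g+s}$ since $s\ge 0$).

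For the first inequality I would first rewrite the target event. Since $g(\bx^*_{g+s}) + s(\bx^*_{g+s}) = \text{OPT}_{g+s}$ and $s(\bx^*_{g+s})$ is a constant, the event $\{\hat g_{\cR}(\bx^*_{g+s})+s(\bx^*_{g+s}) < (1-\varepsilon_1)\text{OPT}_{g+s}\}$ is \emph{exactly} $\{\hat g_{\cR}(\bx^*_{g+s}) < g(\bx^*_{g+s}) - \varepsilon_1\text{OPT}_{g+s}\}$, a lower-tail deviation of size $\varepsilon_1\text{OPT}_{g+s}$ below the mean. Applying the lower-tail bound with $\gamma = \varepsilon_1\text{OPT}_{g+s}/g(\bx^*_{g+s})$ gives probability at most $\exp\!\left(-\frac{\varepsilon_1^2\text{OPT}_{g+s}^2}{2\,g(\bx^*_{g+s})}\cdot\frac{\theta}{n}\right)$, and I would then use $g(\bx^*_{g+s})\le\text{OPT}_{g+s}$ to replace $\text{OPT}_{g+s}^2/g(\bx^*_{g+s})$ by $\text{OPT}_{g+s}\ge\text{LB}$; the choice $\theta\ge\theta^{(1)}$ makes the exponent at most $-\ln(1/\delta_1)$. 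The degenerate case $g(\bx^*_{g+s})=0$ is trivial, since then $\hat g_{\cR}\equiv 0$ and the event is empty.

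For the second inequality the event $\{\hat g_{\cR}(\bx)+s(\bx)\ge \alpha'(1-\varepsilon_1)\text{OPT}_{g+s}-T\}$ becomes $\{\hat g_{\cR}(\bx)\ge a\}$ with $a := \alpha'(1-\varepsilon_1)\text{OPT}_{g+s} - T - s(\bx)$, an upper-tail event, since the hypothesis $g(\bx)+s(\bx) < (\alpha'-\varepsilon/3)\text{OPT}_{g+s} - T$ gives the gap $a - g(\bx) > (\varepsilon/3 - \alpha'\varepsilon_1)\text{OPT}_{g+s} > 0$. Writing $p := a/n$ and $\mu := g(\bx)/n$, the upper-tail bound with $\gamma = (p-\mu)/\mu$ yields an exponent $-\theta\,\frac{(p-\mu)^2}{2\mu + \frac23(p-\mu)}$; I would bound the denominator by $2\mu + \frac23(p-\mu) = \frac43\mu + \frac23 p \le 2p$ and use $p \le \alpha'\,\text{OPT}_{g+s}/n$ (since $a\le\alpha'\text{OPT}_{g+s}$) to lower-bound the exponent's magnitude by $\theta\,\frac{(a-g(\bx))^2}{2\alpha' n\,\text{OPT}_{g+s}}$. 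Plugging in the gap and $\text{OPT}_{g+s}\ge\text{LB}$ turns this into $\theta\,\frac{(\varepsilon/3-\alpha'\varepsilon_1)^2\text{LB}}{2\alpha' n}$, so $\theta\ge\theta^{(2)}(\cN)$ forces the probability below $\delta_2/\cN$.

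The main obstacle I anticipate is the second (upper-tail) estimate: unlike the first part, the true mean $g(\bx)$ appears in the Chernoff exponent and can be arbitrarily small, so a naive bound would leave a $g(\bx)$-dependence. The crux is therefore the two clean relaxations $2\mu+\frac23(p-\mu)\le 2p$ and $p\le\alpha'\text{OPT}_{g+s}/n$, which together eliminate $g(\bx)$ and simultaneously surface the $\alpha'$ factor appearing in $\theta^{(2)}$; making these constants match the stated $\theta^{(2)}$ rather than a weaker bound is the delicate bookkeeping step, whereas everything else is routine substitution into Proposition~\ref{prop-chernoff}.
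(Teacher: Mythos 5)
Your proposal is correct and follows essentially the same route as the paper: both parts reduce to the Chernoff bound on the per-RR-set variables $X_i = 1-\prod_{v\in R_i}(1-h_v(\bx))$, cancel the deterministic term $s(\bx)$, and use $g(\bx)\le \text{OPT}_{g+s}$ and $\text{OPT}_{g+s}\ge\text{LB}$; for the upper tail your relaxation of the denominator to $2a\le 2\alpha'\,\text{OPT}_{g+s}$ is just a cosmetic variant of the paper's bound $2g(\bx)+\tfrac23\varepsilon_2\text{OPT}_{g+s}\le 2\alpha'\,\text{OPT}_{g+s}$ via $g(\bx)\le(\alpha'-\varepsilon/3)\text{OPT}_{g+s}$. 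Your explicit handling of the degenerate case $g(\bx^*_{g+s})=0$ is a small point the paper glosses over, but otherwise the two arguments coincide.
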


\begin{proof}
    First recall that
    \[\hat g_{\cR}(\bx) = \frac{n}{\theta}\sum_{R\in\cR}\left(1-\prod_{v\in R}(1-h_v(\bx))\right).\]
    Let $X_i^{\cR}(\bx) = 1-\prod_{v\in R_i}(1-h_v(\bx))$ where $\cR = \{R_1,R_2,\dots,R_{\theta}\}$, then $X_i^{\cR}(\bx) \in [0,1]$ and $\hat g_{\cR}(\bx) = \frac{n}{\theta}\sum_{i=1}^{\theta}X_i^{\cR}(\bx)$. We also know that $X_i^{\cR}(\bx)$ are independent. Then from the Chernoff bound(Proposition \ref{prop-chernoff}), we have
    \begin{align*}
        &\Pr\{\hat g_{\cR}(\bx^*_{g+s}) + s(\bx^*_{g+s}) < (1-\varepsilon_1)\cdot  \text{OPT}_{g+s}\}\\
        =& \Pr\left\{\frac{n}{\theta}\sum_{i=1}^{\theta}X_i^{\cR}(\bx^*)+s(\bx^*_{g+s}) < (1-\varepsilon_1)\cdot  (g(\bx^*_{g+s}) + s(\bx^*_{g+s}))\right\} \\
        =& \Pr\left\{\sum_{i=1}^{\theta}X_i^{\cR}(\bx^*_{g+s}) - \frac{\theta}{n}g(\bx^*_{g+s}) < -\varepsilon_1\cdot  \frac{\theta}{n}\text{OPT}_{g+s}\right\} \\
        \le& \exp\left(-\frac{\left(\varepsilon_1\cdot  \frac{\text{OPT}_{g+s}}{ g(\bx^*_{g+s})}\right)^2}{2}\frac{\theta}{n}g(\bx^*_{g+s})\right) \\
        \le& \exp\left(-\frac{\varepsilon_1^2}{2}\frac{\theta}{n}\text{OPT}_{g+s}\right)\\
        \le& \exp\left(-\frac{\varepsilon_1^2}{2}\frac{2n\cdot \ln\left(\frac{1}{\delta_1}\right)}{n\text{LB}\cdot \varepsilon_1^2}\text{OPT}_{g+s}\right) \le \delta_1.
    \end{align*}
    Let $\varepsilon_2 = \varepsilon - \alpha' \varepsilon_1$. Let $\bx\in\cP$ be a point such that $g(\bx) + s(\bx) < (\alpha' - \varepsilon/3)\text{OPT}_{g+s} - T$, and we have
    \begin{align*}
        &\Pr\left\{\hat g_{\cR}(\bx)+s(\bx) \ge \alpha'(1-\varepsilon_1)\cdot\text{OPT}_{g+s}-T\right\} \\
        =& \Pr\left\{\frac{n}{\theta}\sum_{i=1}^{\theta}X_i^{\cR}(\bx) + s(\bx) - g(\bx) - s(\bx) \ge \alpha'(1-\varepsilon_1)\cdot\text{OPT}_{g+s} - g(\bx) - s(\bx)-T\right\} \\
        \le& \Pr\left\{\sum_{i=1}^{\theta}X_i^{\cR}(\bx) - \frac{\theta}{n}g(\bx) \ge \frac{\theta}{n}\left(\alpha'(1-\varepsilon_1)\cdot\text{OPT}_{g+s} - (\alpha' - \varepsilon/3)\text{OPT}_{g+s}\right)\right\} \\
        =& \Pr\left\{\sum_{i=1}^{\theta}X_i^{\cR}(\bx) - \frac{\theta}{n}g(\bx) \ge \frac{\theta}{n}\left(\varepsilon_2\text{OPT}_{g+s}\right)\right\}\\
        =& \Pr\left\{\sum_{i=1}^{\theta}X_i^{\cR}(\bx) - \frac{\theta}{n}g(\bx) \ge \left(\frac{\varepsilon_2\text{OPT}_{g+s}}{g(\bx)}\right)\frac{\theta}{n}g(\bx)\right\}\\
        \le& \exp\left(-\frac{\left(\frac{\varepsilon_2\text{OPT}_{g+s}}{g(\bx)}\right)^2}{2+\frac{2}{3}\frac{\varepsilon_2\text{OPT}_{g+s}}{g(\bx)}}\frac{\theta}{n}g(\bx)\right) \\
        \le& \exp\left(-\frac{\varepsilon_2^2\text{OPT}_{g+s}^2}{2g(\bx)+\frac{2}{3}\varepsilon_2\text{OPT}_{g+s}}\frac{\theta}{n}\right) \\
        \le& \exp\left(-\frac{\varepsilon_2^2\text{OPT}_{g+s}^2}{2(\alpha' - \varepsilon/3)\text{OPT}_{g+s}+\frac{2}{3}\varepsilon_2\text{OPT}_{g+s}}\frac{\theta}{n}\right) \\
        \le& \exp\left(-\frac{(\varepsilon/3 - \alpha' \varepsilon_1)^2\text{OPT}_{g+s}}{2\alpha'}\frac{\theta}{n}\right) \\
        \le& \exp\left(-\frac{(\varepsilon/3 - \alpha' \varepsilon_1)^2\text{OPT}_{g+s}}{2\alpha'}\frac{1}{n}\cdot \frac{2\alpha'\cdot n\cdot\ln\left(\frac{\cN}{\delta_2}\right)}{(\varepsilon/3 - \alpha' \varepsilon_1)^2\text{LB}}\right) \le \frac{\delta_2}{\cN}.
    \end{align*}
\end{proof}

\begin{lemma}\label{lem-approx-ratio}
    Given a constant $0 < \alpha < 1, L_1,L_2$. For any $\varepsilon > 0$, any $\varepsilon_2 > 0, 0 < \varepsilon_3 < \alpha$, any $0<\varepsilon_1< (\alpha-\varepsilon_3) \varepsilon/3$, and any $\delta_1,\delta_2>0$. Suppose that we have an lower bound  $\text{LB}$ for the optimal value $\text{OPT}_{g+s}$, if 
    
    (a) $\Pr\{\hat g_{\cR}(\bx^*_{g+s}) + s(\bx^*_{g+s}) < (1-\varepsilon_1)\cdot  \text{OPT}_{g+s}\} \le \delta_1$; 
    
    (b) for any $\bx\in\cP$ that satisfies $g(\bx) + s(\bx) < (\alpha - \varepsilon_3 - \varepsilon/3)\text{OPT}_{g+s}-L_2\varepsilon_2 \text{LB}$, we have 
    \[\Pr\left\{\hat g_{\cR}(\bx)+s(\bx) \ge ((\alpha-\varepsilon_3)(1-\varepsilon_1)\cdot\text{OPT}_{g+s} - L_2\varepsilon_2\text{LB}\right\} \le \frac{\delta_2}{\cN(\cP, \varepsilon_2\text{LB})};\]
    
    (c) the algorithm $\cA$ will output $\bx_{out}$ such that $(\hat g_{\cR}+s)(\bx_{out}) \ge (\alpha-\varepsilon_3)\cdot \max_{\bx\in\cP}(\hat g_{\cR}+s)(\bx)$;
    
    (d) $(g+s)(\bx)$ is $L_1$-Lipschitz, and $(\hat g_{\cR}+s)(\bx)$ is $L_2$-Lipschitz;
    
    then with probability at least $1-\delta_1-\delta_2$,
    \[(g+s)(\bx_{out}) \ge (\alpha - \varepsilon/3 -\varepsilon_3 - (L_1+L_2)\varepsilon_2)\text{OPT}_{g+s}.\]
\end{lemma}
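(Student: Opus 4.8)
The plan is to follow the standard IMM-style argument: discretize $\cP$ with an $\varepsilon$-net, apply a union bound to rule out overestimation of the estimator on the net, and use the two Lipschitz constants in hypothesis (d) to transfer the resulting value guarantees from the net back to the actual (continuous) output $\bx_{out}$. First I would fix a minimal $\varepsilon_2\text{LB}$-net $N$ of $\cP$, so that $|N|\le\cN(\cP,\varepsilon_2\text{LB})$ and every $\bx\in\cP$ admits a net point $\pi(\bx)\in N$ with $||\bx-\pi(\bx)||_2\le\varepsilon_2\text{LB}$. I then define two good events. Let $E_1$ be the event in hypothesis (a), so that $\Pr[\neg E_1]\le\delta_1$. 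Let $E_2$ be the event that every net point $\bx\in N$ satisfying $(g+s)(\bx)<(\alpha-\varepsilon_3-\varepsilon/3)\text{OPT}_{g+s}-L_2\varepsilon_2\text{LB}$ also satisfies $(\hat g_{\cR}+s)(\bx)<(\alpha-\varepsilon_3)(1-\varepsilon_1)\text{OPT}_{g+s}-L_2\varepsilon_2\text{LB}$. The crucial observation is that membership of a net point in this ``bad'' subset depends only on the deterministic quantities $g$, $s$, and $\text{OPT}_{g+s}$, not on the randomly drawn RR sets; hence the bad subset is a fixed collection of at most $\cN(\cP,\varepsilon_2\text{LB})$ points determined before $\cR$ is sampled. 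Applying hypothesis (b) to each such point and taking a union bound over them gives $\Pr[\neg E_2]\le\delta_2$, so $\Pr[E_1\cap E_2]\ge 1-\delta_1-\delta_2$.

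Next I would establish, on $E_1\cap E_2$, a two-sided sandwich on $\bx_{out}$. For the lower side, I combine hypothesis (c) with $E_1$: since $\bx^*_{g+s}\in\cP$, we have $(\hat g_{\cR}+s)(\bx_{out})\ge(\alpha-\varepsilon_3)\max_{\bx\in\cP}(\hat g_{\cR}+s)(\bx)\ge(\alpha-\varepsilon_3)(\hat g_{\cR}+s)(\bx^*_{g+s})\ge(\alpha-\varepsilon_3)(1-\varepsilon_1)\text{OPT}_{g+s}$. For the upper side I argue by contradiction, assuming $(g+s)(\bx_{out})<(\alpha-\varepsilon/3-\varepsilon_3-(L_1+L_2)\varepsilon_2)\text{OPT}_{g+s}$. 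Transferring to $\pi(\bx_{out})$ via the $L_1$-Lipschitz property of $g+s$ gives $(g+s)(\pi(\bx_{out}))\le(g+s)(\bx_{out})+L_1\varepsilon_2\text{LB}$, and after relaxing $\text{LB}\le\text{OPT}_{g+s}$ in the slack term this yields $(g+s)(\pi(\bx_{out}))<(\alpha-\varepsilon_3-\varepsilon/3)\text{OPT}_{g+s}-L_2\varepsilon_2\text{LB}$; that is, $\pi(\bx_{out})$ is a bad point. Event $E_2$ then forces $(\hat g_{\cR}+s)(\pi(\bx_{out}))<(\alpha-\varepsilon_3)(1-\varepsilon_1)\text{OPT}_{g+s}-L_2\varepsilon_2\text{LB}$, and the $L_2$-Lipschitz property of $\hat g_{\cR}+s$ transfers this back to $(\hat g_{\cR}+s)(\bx_{out})<(\alpha-\varepsilon_3)(1-\varepsilon_1)\text{OPT}_{g+s}$, contradicting the lower side. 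Hence on $E_1\cap E_2$ the claimed bound $(g+s)(\bx_{out})\ge(\alpha-\varepsilon/3-\varepsilon_3-(L_1+L_2)\varepsilon_2)\text{OPT}_{g+s}$ holds with probability at least $1-\delta_1-\delta_2$.

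The only delicate part is the bookkeeping of the several error terms so that everything lines up exactly. In particular, I would verify that the $-L_2\varepsilon_2\text{LB}$ penalty built into hypothesis (b) cancels precisely against the $L_2$-Lipschitz slack incurred when passing from $\pi(\bx_{out})$ back to $\bx_{out}$ in the estimator, and that the substitution $\text{LB}\le\text{OPT}_{g+s}$ in the $L_1$ transfer is what converts the $(L_1+L_2)\varepsilon_2$ slack appearing in the conclusion into exactly the threshold demanded by (b). Conceptually, the $\varepsilon$-net plays the role that the finite solution space of size $\binom{n}{k}$ plays in the original IMM analysis, and the legitimacy of the union bound rests entirely on the bad subset of $N$ being fixed independently of the RR sampling.
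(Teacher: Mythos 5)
Your proof is correct and follows essentially the same route as the paper's: an $\varepsilon_2\LB$-net with a union bound over its (deterministically determined) bad points, the lower bound $(\hat g_{\cR}+s)(\bx_{out})\ge(\alpha-\varepsilon_3)(1-\varepsilon_1)\text{OPT}_{g+s}$ obtained from (a) and (c), and the two Lipschitz transfers between $\bx_{out}$ and $\pi(\bx_{out})$ with $\LB\le\text{OPT}_{g+s}$ absorbing the slack into the final $(L_1+L_2)\varepsilon_2$ term. The only cosmetic difference is that you phrase the last step as a contradiction, whereas the paper runs the identical chain of inequalities directly (from $(\hat g_{\cR}+s)(\pi(\bx_{out}))$ via the contrapositive of (b) to $(g+s)(\pi(\bx_{out}))$ and then to $(g+s)(\bx_{out})$).
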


\begin{proof}
    First we fix an $\varepsilon_2\text{LB}$-net $E$ for the set $\cP$ with number of points $\cN(\cP, \varepsilon_2\text{LB})$. Let $\pi(\bx):\cP\to E$ denote the mapping from $\cP$ to the $\varepsilon_2\text{LB}$-net $E$ such that $||\pi(\bx) - \bx||_2 \le \varepsilon_2\text{LB}$.
    From assumption (a), we know that with probability at least $1-\delta_1$, we have
    \begin{align*}
        (\hat g_{\cR}+s)(\bx_{out}) \ge& (\alpha - \varepsilon_3)\max_{\bx\in\cP}(\hat g_{\cR}+s)(\bx)\\
        \ge& (\alpha - \varepsilon_3)(\hat g_{\cR}+s)(\bx^*_{g+s})\\
        \ge& (\alpha - \varepsilon_3)(1-\varepsilon_1)\cdot  \text{OPT}_{g+s}.
    \end{align*}
    Since $(\hat g_{\cR}+s)(\bx)$ is $L_2$-Lipschitz, we have
    \begin{align*}
        (\hat g_{\cR}+s)(\pi(\bx_{out})) =& (\hat g_{\cR}+s)(\bx_{out}) + ((\hat g_{\cR}+s)(\pi(\bx_{out})) - (\hat g_{\cR}+s)(\bx_{out})) \\
        \ge& (\hat g_{\cR}+s)(\bx_{out}) - |(\hat g_{\cR}+s)(\pi(\bx_{out})) - (\hat g_{\cR}+s)(\bx_{out})| \\
        \ge& (\alpha - \varepsilon_3)(1-\varepsilon_1)\cdot  \text{OPT}_{g+s} - L_2\varepsilon_2\text{LB}.
    \end{align*}
    Then from (b) and the union bound, we know that with probability at least $1-\delta_2$, for every $\bx\in E$, if $g(\bx) + s(\bx) < (\alpha - \varepsilon_3 - \varepsilon/3)\text{OPT}_{g+s}-L_2\varepsilon_2\text{LB}$, then
    \[\hat g_{\cR}(\bx)+s(\bx) < (\alpha-\varepsilon_3)(1-\varepsilon_1)\cdot\text{OPT}_{g+s} - L_2\varepsilon_2\text{LB}.\]
    Then by the union bound, we know that with probability at least $1-\delta_1-\delta_2$,
    \[(g+s)(\pi(\bx_{out})) \ge (\alpha - \varepsilon_3 - \varepsilon/3)\text{OPT}_{g+s}-L_2\varepsilon_2\text{LB}.\]
    Since from (d), $(g+s)(\bx)$ is $L_1$-Lipschitz, then with probability at least $1-\delta_1-\delta_2$, we have
    \[(g+s)(\bx_{out}) \ge (g+s)(\pi(\bx_{out})) - L_1\varepsilon_2\text{LB} \ge (\alpha - \varepsilon_3 - \varepsilon/3-(L_1+L_2)\varepsilon_2)\text{OPT}_{g+s}.\]
\end{proof}

Combining Lemma \ref{lem-direct-chernoff} and Lemma \ref{lem-approx-ratio} together, we have the following lemma,
\begin{lemma}\label{lem-approx-ratio-with-parameter}
    If $(g+s)(\bx)$ is $L_1$-Lipschitz, and $(\hat g_{\cR}+s)(\bx)$ is $L_2$-Lipschitz. Suppose that we have an lower bound $\text{LB}$ for the optimal value $\text{OPT}_{g+s}$, and an oracle $\cA$ that can get an $(\alpha-\varepsilon/3)$-approximation for $\text{OPT}_{g+s}$, where $\alpha-\varepsilon/3 > 0$. Let
    \[\theta^{(1)} = \frac{8n\cdot \ln\left(4n^{\ell}\right)}{\text{LB}\cdot (\alpha-\varepsilon/3)^2\varepsilon^2/9}, \quad \theta^{(2)} = \frac{2\alpha'\cdot n\cdot\ln\left(4n^{\ell}\cN(\cP,\frac{\varepsilon/3}{L_1+L_2}\text{LB})\right)}{(\varepsilon/3 - \frac{1}{4}(\alpha-\varepsilon/3)^2 \varepsilon/3)^2\text{LB}}.\]
    If $|\cR| = \theta \ge \max\{\theta^{(1)},\theta^{(2)}\}$, and $\bx_{out} = \cA(\hat g_{\cR} + h, \varepsilon \text{LB})$, then with probability at least $1-\frac{1}{2n^{\ell}}$,
    \[(g+s)(\bx_{out}) \ge (\alpha - \varepsilon)\text{OPT}_{g+s}.\]
\end{lemma}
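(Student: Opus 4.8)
The plan is to obtain this lemma as an immediate corollary of Lemmas~\ref{lem-direct-chernoff} and~\ref{lem-approx-ratio} by instantiating the free parameters $\varepsilon_1,\varepsilon_2,\varepsilon_3,\delta_1,\delta_2,\alpha'$ so that three things happen at once: (i) the two sampling thresholds produced by Lemma~\ref{lem-direct-chernoff} reduce to exactly the displayed $\theta^{(1)}$ and $\theta^{(2)}$; (ii) the leftover error terms in the conclusion of Lemma~\ref{lem-approx-ratio} telescope to the multiplicative factor $\alpha-\varepsilon$; and (iii) the two failure probabilities sum to $\frac{1}{2n^\ell}$. All the probabilistic content is already contained in the two preceding lemmas, so the proof is really a parameter-fitting exercise followed by a union bound.

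First I would fix the net scale and the confidence levels. Setting $\varepsilon_2 = \frac{\varepsilon/3}{L_1+L_2}$ makes the covering number invoked by Lemma~\ref{lem-direct-chernoff} equal to $\cN(\cP,\frac{\varepsilon/3}{L_1+L_2}\text{LB})$ (the quantity appearing in $\theta^{(2)}$) and simultaneously forces $(L_1+L_2)\varepsilon_2 = \varepsilon/3$; setting $\delta_1 = \delta_2 = \frac{1}{4n^\ell}$ turns $\ln(1/\delta_1)$ into $\ln(4n^\ell)$ and $\ln(\cN/\delta_2)$ into $\ln(4n^\ell\,\cN(\cP,\cdot))$, matching the logarithmic factors of both thresholds. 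I would then take $\varepsilon_3 = \varepsilon/3$, which is precisely the multiplicative slack of the assumed oracle, so that hypothesis (c) of Lemma~\ref{lem-approx-ratio} is met verbatim by the $(\alpha-\varepsilon/3)$-approximate $\cA$, and so that the identification $\alpha' = \alpha-\varepsilon_3 = \alpha-\varepsilon/3$ is used when applying the second half of Lemma~\ref{lem-direct-chernoff} (with $T = L_2\varepsilon_2\text{LB}$). Finally I would choose $\varepsilon_1$ of order $(\alpha-\varepsilon/3)\varepsilon$ (e.g.\ $\varepsilon_1 = \frac{(\alpha-\varepsilon/3)\varepsilon}{6}$), which respects the side constraints $0<\alpha'\varepsilon_1<\varepsilon/3$ and $0<\varepsilon_1<(\alpha-\varepsilon_3)\varepsilon/3$ and drives $\theta^{(1)} = \frac{2n\ln(4n^\ell)}{\varepsilon_1^2\,\text{LB}}$ into the stated form $\frac{8n\ln(4n^\ell)}{\text{LB}(\alpha-\varepsilon/3)^2\varepsilon^2/9}$, while the denominator $(\varepsilon/3-\alpha'\varepsilon_1)^2$ of $\theta^{(2)}$ takes the displayed shape.

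With these parameters in place, the argument closes quickly. Since $\theta = |\cR|\ge\theta^{(1)}$, the first Chernoff estimate of Lemma~\ref{lem-direct-chernoff} supplies hypothesis (a) of Lemma~\ref{lem-approx-ratio} with probability at least $1-\delta_1$; since $\theta\ge\theta^{(2)}$ and the $\varepsilon_2\text{LB}$-net has $\cN(\cP,\varepsilon_2\text{LB})$ points, the second Chernoff estimate (together with a union bound over the net) supplies hypothesis (b) with probability at least $1-\delta_2$. Hypotheses (c) and (d) are exactly the oracle guarantee and the Lipschitz assumptions, both given. Invoking Lemma~\ref{lem-approx-ratio} then yields, with probability at least $1-\delta_1-\delta_2 = 1-\frac{1}{2n^\ell}$,
\[
(g+s)(\bx_{out}) \ge \left(\alpha - \varepsilon/3 - \varepsilon_3 - (L_1+L_2)\varepsilon_2\right)\text{OPT}_{g+s} = (\alpha-\varepsilon)\text{OPT}_{g+s},
\]
where the last equality uses $\varepsilon_3 = \varepsilon/3$ and $(L_1+L_2)\varepsilon_2 = \varepsilon/3$.

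The step I expect to be the main obstacle is the consistent bookkeeping of the constants in the second paragraph: a single choice of $\varepsilon_1$ must simultaneously reconcile the $\theta^{(1)}$ requirement (which scales like $1/\varepsilon_1^2$) with the denominator $(\varepsilon/3-\alpha'\varepsilon_1)^2$ of $\theta^{(2)}$, and one must verify that each simplification of these expressions only enlarges the threshold, so that $\theta\ge\max\{\theta^{(1)},\theta^{(2)}\}$ remains sufficient rather than being weakened. Indeed, matching the leading constant of $\theta^{(1)}$ forces a specific $\varepsilon_1$, and one must then check carefully that this same $\varepsilon_1$ makes $\theta^{(2)}$ at least as large as what the Chernoff bound demands (absorbing any residual numerical factor into the displayed constant). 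Confirming that the chosen $\varepsilon_1$ keeps both side constraints strict, and that $\alpha-\varepsilon/3>0$ makes every threshold and the final ratio well defined, is the only genuinely delicate point; everything else is substitution into the two lemmas followed by the union bound.
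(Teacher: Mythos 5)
Your proposal matches the paper's own proof essentially verbatim: the paper also derives the lemma by plugging $\delta_1=\delta_2=\frac{1}{4n^\ell}$, $\varepsilon_2=\frac{\varepsilon/3}{L_1+L_2}$, $\varepsilon_3=\varepsilon/3$, $\alpha'=\alpha-\varepsilon/3$, and an $\varepsilon_1$ proportional to $(\alpha-\varepsilon/3)\varepsilon/3$ into Lemmas~\ref{lem-direct-chernoff} and~\ref{lem-approx-ratio} and then applying a union bound. Your explicit choice $\varepsilon_1=\frac{1}{2}(\alpha-\varepsilon/3)\varepsilon/3$ is in fact the one that reproduces the displayed $\theta^{(1)}$ and respects the side constraint $\varepsilon_1<(\alpha-\varepsilon_3)\varepsilon/3$ (the paper writes $\varepsilon_1=\frac{1}{2}(\alpha-\varepsilon/3)$, apparently dropping the $\varepsilon/3$ factor), and the residual mismatch you flag between $\alpha'\varepsilon_1=\frac{1}{2}(\alpha-\varepsilon/3)^2\varepsilon/3$ and the $\frac{1}{4}(\alpha-\varepsilon/3)^2\varepsilon/3$ appearing in the stated $\theta^{(2)}$ is a genuine constant-level inconsistency in the paper itself, not a gap in your argument.
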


\begin{proof}
    The proof of this lemma is a direct combination of Lemma \ref{lem-direct-chernoff} and Lemma \ref{lem-approx-ratio}. We choose the parameters $\delta_1 = \delta_2 = \frac{1}{4n^{\ell}}$ in Lemma \ref{lem-direct-chernoff} and Lemma \ref{lem-approx-ratio}. We choose $\varepsilon_1 = \frac{1}{2}(\alpha-\varepsilon/3)$, $\varepsilon_2 = \frac{\varepsilon/3}{L_1+L_2}$, $\varepsilon_3 = \varepsilon/3$ in Lemma \ref{lem-approx-ratio}, and $\alpha' = \alpha-\varepsilon/3$ in Lemma \ref{lem-direct-chernoff}.
    
    Now since $|\cR| = \theta \ge \max\{\theta^{(1)},\theta^{(2)}\}$, then the assumption of Lemma \ref{lem-direct-chernoff} is satisfied, and then the assumption (a),(b) of Lemma \ref{lem-approx-ratio} is satisfied.
    
    Then based on our assumption on the oracle $\cA$, we know that
    \[(\hat g_{\cR}+s)(\bx_{out}) \ge \alpha \max_{\bx\in\cP}(\hat g_{\cR}+s)(\bx) - \varepsilon/3\cdot\text{LB} \ge (\alpha-\varepsilon/3)\max_{\bx\in\cP}(\hat g_{\cR}+s)(\bx),\]
    then the assumption (c) of Lemma \ref{lem-approx-ratio} is satisfied.
    
    We also assume that $(g+s)(\bx)$ is $L_1$-Lipschitz, and $(\hat g_{\cR}+s)(\bx)$ is $L_2$-Lipschitz, so assumption (d) is also satisfied. Then we know that with probability at least $1-\frac{1}{2n^{\ell}}$,
    \[(g+s)(\bx_{out}) \ge (\alpha - \varepsilon)\text{OPT}_{g+s}.\]
\end{proof}

Now we show that with high probability, the sampling procedure will return a lower bound $\text{LB} \le \text{OPT}_{g+s}$.

\begin{lemma}\label{lem-high-probability-lb}
    For every $i=1,2,\dots,\lfloor\log_2(n+\lambda k)-1\rfloor$, suppose that $g_{\cR_i} + h$ is $L_2$-Lipschitz where $|\cR_i| = \theta_i = \left\lceil\frac{n\cdot\left(2+\frac{2}{3}\varepsilon'\right)\cdot\left(\ln\cN(\cP,\frac{\varepsilon/3}{L_2} x_i)+\ell \ln n+\ln 2 + \ln\log_2{(n+\lambda k)}\right)}{\varepsilon'^2 x_i}\right\rceil$,
    
    (a) if $x_i = \frac{n+k}{2^i} > \text{OPT}_{g+s}$, then with probability at most $\frac{1}{2n^{\ell}\log_2(n+\lambda k)}$, 
    \[(\hat g_{\cR_i}+s)(\by_i) \ge (1+\varepsilon'+\varepsilon/3) x_i;\]
    
    (b) if $x_i = \frac{n+k}{2^i} \le \text{OPT}_{g+s}$, then with probability at most $\frac{1}{2n^{\ell}\log_2(n+\lambda k)}$, 
    \[(\hat g_{\cR_i}+s)(\by_i) \ge (1+\varepsilon'+\varepsilon/3) \text{OPT}_{g+s}.\]
\end{lemma}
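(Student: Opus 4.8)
The plan is to bound, for a single stage $i$ of the sampling loop, the probability that the estimated objective at the algorithm's output $\by_i$ overshoots its target, by reducing the random test point $\by_i$ to a union bound over a fixed net on which a Chernoff bound applies. Both (a) and (b) are overshoot events: in (a), $x_i>\text{OPT}_{g+s}$ and the danger is that the break condition $(\hat g_{\cR_i}+s)(\by_i)\ge(1+\varepsilon'+\varepsilon/3)x_i$ fires and sets $\LB$ above $\text{OPT}_{g+s}$; in (b), $x_i\le\text{OPT}_{g+s}$ and the danger is $(\hat g_{\cR_i}+s)(\by_i)\ge(1+\varepsilon'+\varepsilon/3)\text{OPT}_{g+s}$. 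Since $s$ is deterministic and $\hat g_{\cR_i}(\bx)=\frac{n}{\theta_i}\sum_{j=1}^{\theta_i}X_j(\bx)$ with $X_j(\bx)=1-\prod_{v\in R_j}(1-h_v(\bx))\in[0,1]$ i.i.d.\ and $\E[X_j(\bx)]=g(\bx)/n$ (by the RR-set identity of \cite{WYC18}), the deviation of $(\hat g_{\cR_i}+s)(\bx)$ above its mean $(g+s)(\bx)\le\text{OPT}_{g+s}$ is exactly an upper tail of a bounded i.i.d.\ sum.

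First I would remove the dependence of $\by_i$ on the random RR sets. Fix a minimal $\frac{\varepsilon/3}{L_2}x_i$-net $E$ of $\cP$ with $|E|=\cN(\cP,\frac{\varepsilon/3}{L_2}x_i)$ and nearest-point map $\pi$. Since $\hat g_{\cR_i}+s$ is $L_2$-Lipschitz, $(\hat g_{\cR_i}+s)(\by_i)\le(\hat g_{\cR_i}+s)(\pi(\by_i))+L_2\cdot\frac{\varepsilon/3}{L_2}x_i=(\hat g_{\cR_i}+s)(\pi(\by_i))+\frac{\varepsilon}{3}x_i$. Hence in (a) the break event forces $(\hat g_{\cR_i}+s)(\bz)\ge(1+\varepsilon')x_i$ for the net point $\bz=\pi(\by_i)$, and in (b) it forces $(\hat g_{\cR_i}+s)(\bz)\ge(1+\varepsilon')\text{OPT}_{g+s}$ (using $x_i\le\text{OPT}_{g+s}$ to absorb the $\frac{\varepsilon}{3}x_i$ slack into $\frac{\varepsilon}{3}\text{OPT}_{g+s}$). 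In either case the bad event is contained in the union over the fixed points $\bz\in E$ of the event that $(\hat g_{\cR_i}+s)(\bz)$ exceeds the stated threshold, so a union bound over $E$ reduces everything to a per-point tail bound.

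Next I would apply the Chernoff upper tail (Proposition~\ref{prop-chernoff}) at a fixed $\bz\in E$. Writing $\delta_{\bz}$ for the gap between threshold and mean $(g+s)(\bz)$ — namely $\delta_{\bz}=(1+\varepsilon')x_i-(g+s)(\bz)\ge\varepsilon'x_i$ in (a) and $\delta_{\bz}=(1+\varepsilon')\text{OPT}_{g+s}-(g+s)(\bz)\ge\varepsilon'\text{OPT}_{g+s}$ in (b) — the bound with $\gamma=\delta_{\bz}/g(\bz)$ and $t\mu=\theta_i g(\bz)/n$ yields tail exponent $\frac{\theta_i\,\delta_{\bz}^2}{n(2g(\bz)+\frac23\delta_{\bz})}$. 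It then remains to show this exponent is at least $C:=\ln\cN(\cP,\frac{\varepsilon/3}{L_2}x_i)+\ell\ln n+\ln 2+\ln\log_2(n+\lambda k)$, since then each per-point probability is at most $e^{-C}=\frac{1}{\cN\cdot 2n^\ell\log_2(n+\lambda k)}$ and the union over $E$ delivers the claimed $\frac{1}{2n^\ell\log_2(n+\lambda k)}$.

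The crux — and the step I expect to be the main obstacle — is matching this exponent to the definition of $\theta_i$. Substituting $\theta_i\ge\frac{n(2+\frac23\varepsilon')C}{\varepsilon'^2 x_i}$, it suffices to prove $\frac{\delta_{\bz}^2}{2g(\bz)+\frac23\delta_{\bz}}\ge\frac{\varepsilon'^2 x_i}{2+\frac23\varepsilon'}$. I would reduce the left side to its worst case via $g(\bz)\le(g+s)(\bz)=:u$ (as $s\ge0$), so that with $\delta_{\bz}=(1+\varepsilon')M-u$, where $M=x_i$ in (a) and $M=\text{OPT}_{g+s}\ge x_i$ in (b), the quantity is at least $\phi(u):=\frac{((1+\varepsilon')M-u)^2}{\frac43 u+\frac23(1+\varepsilon')M}$. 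A short calculation gives $\phi'(u)<0$ for $u\le M$ and $\phi(M)=\frac{\varepsilon'^2 M}{2+\frac23\varepsilon'}\ge\frac{\varepsilon'^2 x_i}{2+\frac23\varepsilon'}$; since $u\le\text{OPT}_{g+s}<x_i=M$ in (a) and $u\le\text{OPT}_{g+s}=M$ in (b), monotonicity yields $\phi(u)\ge\phi(M)$, closing the bound. The delicate points are keeping the $\varepsilon'$ (net threshold) and $\varepsilon/3$ (Lipschitz slack) bookkeeping consistent with the break condition, and verifying that $\phi$ is decreasing so that the extremal placement $s(\bz)=0$, $g(\bz)=(g+s)(\bz)$ indeed governs the worst case.
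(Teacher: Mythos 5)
Your proposal is correct and follows essentially the same route as the paper's proof: reduce the random output $\by_i$ to a fixed $\frac{\varepsilon/3}{L_2}x_i$-net via the $L_2$-Lipschitz property (converting the $\varepsilon/3$ slack exactly as the paper's reduction to its claims (a') and (b')), apply the Chernoff upper tail at each fixed net point with deviation at least $\varepsilon' x_i$ (resp.\ $\varepsilon'\,\text{OPT}_{g+s}$), match the exponent to the definition of $\theta_i$ using $g(\bz)\le (g+s)(\bz)\le \text{OPT}_{g+s}$, and finish with a union bound over the $\cN(\cP,\frac{\varepsilon/3}{L_2}x_i)$ net points. Your worst-case analysis via the decreasing function $\phi(u)$ is a slightly more formal rendering of the paper's direct substitutions $g(\by)\le x_i$ and $g(\by)\le\text{OPT}_{g+s}$, but the argument is the same.
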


\begin{proof}
    Let $E_i$ be the $\frac{\varepsilon/3}{L_2} x_i$-net such that $|E_i| = \cN(\cP, \frac{\varepsilon/3}{L_2} x_i)$ and let $\pi_i(\bx):\cP\to E_i$ denote the mapping such that $||\pi_i(\bx) - \bx||_2 \le \frac{\varepsilon/3}{L_2} x_i$. Since $g_{\cR_i} + h$ is $L_2$-Lipschitz, we only have to prove that
    
    (a') if $x_i = \frac{n+k}{2^i} > \text{OPT}_{g+s}$, then with probability at most $\frac{1}{2n^{\ell}\log_2(n+\lambda k)}$, 
    \[(\hat g_{\cR_i}+s)(\pi_i(\by_i)) \ge (1+\varepsilon') x_i;\]
    
    (b') if $x_i = \frac{n+k}{2^i} \le \text{OPT}_{g+s}$, then with probability at most $\frac{1}{2n^{\ell}\log_2(n+\lambda k)}$, 
    \[(\hat g_{\cR_i}+s)(\pi_i(\by_i)) \ge (1+\varepsilon') \text{OPT}_{g+s}.\]
    
    Let $X_j^{\cR_i}(\bx) = 1-\prod_{v\in R_j}(1-h_v(\bx))$ where $\cR_i = \{R_1,R_2,\dots,R_{\theta_i}\}$, then $X_j^{\cR_i}(\bx) \in [0,1]$ and $\hat g_{\cR_i}(\bx) = \frac{n}{\theta}\sum_{j=1}^{\theta}X_h^{\cR_i}(\bx)$.
    
    We first prove (a'). We first fix a $\by\in\cP$, then if $x_i = \frac{n+k}{2^i} > \text{OPT}_{g+s}$, we have
    \begin{align*}
        &\Pr\left\{(\hat g_{\cR_i}+s)(\by) \ge (1+\varepsilon') x_i\right\}\\
        =& \Pr\left\{\frac{n}{\theta}\sum_{j=1}^{\theta_i}X_j^{\cR_i}(\by) + s(\by) \ge (1+\varepsilon') x_i\right\} \\
        =& \Pr\left\{\frac{n}{\theta}\sum_{j=1}^{\theta_i}X_j^{\cR_i}(\by)-g(\by)\ge (1+\varepsilon') x_i-s(\by)-g(\by)\right\}\\
        \le& \Pr\left\{\sum_{j=1}^{\theta_i}X_j^{\cR_i}(\by)-\frac{\theta_i}{n}g(\by)\ge \frac{\theta_i}{n}\varepsilon' x_i\right\}\\
        \le& \exp\left(-\frac{\left(\frac{\varepsilon' x_i}{g(\by)}\right)^2}{2+\frac{2}{3}\frac{\varepsilon' x_i}{g(\by)}}\frac{\theta_i}{n}g(\by)\right) \\
        =& \exp\left(-\frac{\left(\varepsilon' x_i\right)^2}{2g(\by)+\frac{2}{3}\varepsilon' x_i}\frac{\theta_i}{n}\right) \\
        \le& \exp\left(-\frac{\varepsilon'^2 x_i}{2+\frac{2}{3}\varepsilon'}\frac{\theta_i}{n}\right) \\
        \le& \exp\left(-\frac{\varepsilon'^2 x_i}{2+\frac{2}{3}\varepsilon'}\frac{1}{n}\frac{n\cdot\left(2+\frac{2}{3}\varepsilon'\right)\cdot\left(\ln\cN(\cP,\frac{\varepsilon/3}{L_2} x_i)+\ell \ln n+\ln 2 + \ln\log_2{(n+\lambda k)}\right)}{\varepsilon'^2 x_i}\right) \\
        =& \frac{1}{2n^{\ell}\cdot \cN(\cP,\frac{\varepsilon/3}{L_2} x_i)\cdot \log_2{(n+\lambda k)}}.
    \end{align*}
    Then note that there are at most $\cN(\cP,\frac{\varepsilon}{L_2} x_i)$ possibilities for $\pi_i(\by_i)$, so applying the union bound, we have
    \[\Pr\left\{(\hat g_{\cR_i}+s)(\pi_i(\by_i)) \ge (1+\varepsilon') x_i\right\}\le \frac{1}{2n^{\ell}\log_2(n+\lambda k)}.\]
    Then we prove (b'). If $x_i = \frac{n+k}{2^i} \le \text{OPT}_{g+s}$, then for any fixed $y\in\cP$, we have
    \begin{align*}
        &\Pr\left\{(\hat g_{\cR_i}+s)(\by) \ge (1+\varepsilon') \text{OPT}_{g+s}\right\}\\
        =& \Pr\left\{\frac{n}{\theta}\sum_{j=1}^{\theta_i}X_j^{\cR_i}(\by) + s(\by) \ge (1+\varepsilon') \text{OPT}_{g+s}\right\} \\
        =& \Pr\left\{\frac{n}{\theta}\sum_{j=1}^{\theta_i}X_j^{\cR_i}(\by)-g(\by)\ge (1+\varepsilon') \text{OPT}_{g+s}-s(\by)-g(\by)\right\}\\
        \le& \Pr\left\{\sum_{j=1}^{\theta_i}X_j^{\cR_i}(\by)-\frac{\theta_i}{n}g(\by)\ge \frac{\theta_i}{n}\varepsilon' \text{OPT}_{g+s}\right\}\\
        \le& \exp\left(-\frac{\left(\frac{\varepsilon' \text{OPT}_{g+s}}{g(\by)}\right)^2}{2+\frac{2}{3}\frac{\varepsilon' \text{OPT}_{g+s}}{g(\by)}}\frac{\theta_i}{n}g(\by)\right) \\
        =& \exp\left(-\frac{\left(\varepsilon' \text{OPT}_{g+s}\right)^2}{2g(\by)+\frac{2}{3}\varepsilon' \text{OPT}_{g+s}}\frac{\theta_i}{n}\right) \\
        \le& \exp\left(-\frac{\varepsilon'^2 \text{OPT}_{g+s}}{2+\frac{2}{3}\varepsilon'}\frac{\theta_i}{n}\right) \\
        \le& \exp\left(-\frac{\varepsilon'^2 \text{OPT}_{g+s}}{2+\frac{2}{3}\varepsilon'}\frac{1}{n}\frac{n\cdot\left(2+\frac{2}{3}\varepsilon'\right)\cdot\left(\ln\cN(\cP,\frac{\varepsilon/3}{L_2} x_i)+\ell \ln n+\ln 2 + \ln\log_2{(n+\lambda k)}\right)}{\varepsilon'^2 x_i}\right) \\
        \le& \frac{1}{2n^{\ell}\cdot \cN(\cP,\frac{\varepsilon/3}{L_2} x_i)\cdot \log_2{(n+\lambda k)}}.
    \end{align*}
    Then similar to the proof of (a'), applying the union bound will conclude the proof.
\end{proof}

Then with the help of Lemma \ref{lem-high-probability-lb}, we can prove Theorem \ref{thm-combining-gradient-rrset}.

{\thmcombining*}

\begin{proof}[Proof of Theorem \ref{thm-combining-gradient-rrset}]
    First we show that with probability at least $1-\frac{1}{2n^{\ell}}$, the output lower bound $\text{LB} \le \text{OPT}_{g+s}$. We first prove the case when $\text{OPT}_{g+s} \ge x_{\lfloor\log_2(n+\lambda k)\rfloor-1}$. Let $k$ denote the smallest index such that $\text{OPT}_{g+s} \ge x_k$. Then for any $i\le k-1$, we have $\text{OPT}_{g+s} < x_i$ and for any $j\ge k$, we have $\text{OPT}_{g+s} \ge x_j$. Then from Lemma \ref{lem-high-probability-lb} and union bound, we know that with probability at least $\frac{1}{2n^{\ell}}$, for every $i\le k-1$, $(\hat g_{\cR_i}+s)(\pi_i(\by_i)) \ge (1+\varepsilon') x_i$, and for every $j\ge k$, we have $(\hat g_{\cR_j}+s)(\by_j) \ge (1+\varepsilon'+\varepsilon/3) \text{OPT}_{g+s}$. Then from the definition of the algorithm, we know that $\text{LB}\le\text{OPT}_{g+s}$.
    
    Then for the case when $\text{OPT}_{g+s} < x_{\lfloor\log_2(n+\lambda k)\rfloor-1}$, from the union bound, we know that with probability at least $1-\frac{1}{2n^{\ell}}$ the `break' statement will not be executed. So $\text{LB} = 1 \le \text{OPT}_{g+s}$.
    
    Then we bound the probability that the algorithm does not return an $\alpha-\varepsilon$ approximation. Let $\cA$ denote the event that the algorithm does not return an $\alpha-\varepsilon$ approximation, and $\cB$ denote the event that the output of the Sampling procedure $\text{LB} > \text{OPT}_{g+s}$. We want to show that $\Pr\{\cA\}\le\frac{1}{n^{\ell}}$. We have
    \begin{align*}
        \Pr\{\cA\} =& \Pr\{\cA\land\cB\} +\Pr\{\cA\land\lnot\cB\} \\
        \le& \Pr\{\cB\} + \Pr\{\cA|\lnot\cB\}.
    \end{align*}
    From Lemma \ref{lem-high-probability-lb}, we have $\Pr\{\cB\} \le \frac{1}{2n^{\ell}}$. Since we generate new RR-sets before using the oracle to get the solution, so $\text{LB}$ can be viewed as fixed, and from Lemma \ref{lem-approx-ratio-with-parameter}, we know that $\Pr\{\cA|\lnot\cB\}\le\frac{1}{2n^{\ell}}$. Combined them together, we complete the proof.
\end{proof}

\section{Omitted Proofs in Subsection \ref{sec-gradient-method-algorithm}}
\subsection{Proof of Theorem \ref{thm-proximal-grad}}
In this section, we give the formal proof of Theorem \ref{thm-proximal-grad}. First we slightly review the iteration step and the notations. For the proximal gradient descent, the iteration is shown as follows:
\begin{equation}\label{equ-proximal-gradient2}
\left\{
\begin{array}{l}
    \bx^{(t+1)} = \text{prox}_{-\eta_t f_2}(\bx^{(t)} + \eta_t \bv^{(t)}), 
    \text{where } \E[\bv^{(t)}] = \nabla f_1(\bx^{(t)}),\\
    \text{prox}_\phi(\bx) := \argmin_{\by\in\cP} (\phi(\by) + \frac{1}{2}||\bx-\by||^2_2),
    \text{for any convex function } \phi,
\end{array}
\right.
\end{equation}
where $\eta_t \le \frac{1}{\beta}$ is the step size and $\bv^{(t)}$ is the stochastic gradient at $\bx^{(t)}$ .
Note that without loss of generality, we can assume that $f_2(\bx)=-\infty$ for all $\bx\notin\cP$, and we have
$\argmin_{\by\in\cP} (\phi(\by) + \frac{1}{2}||\bx-\by||^2_2) = 
\argmin_{\by} (\phi(\by) + \frac{1}{2}||\bx-\by||^2_2)$.
Let $G_{\eta}(\bx) = \frac{1}{\eta}(\bx - \text{prox}_{-\eta f_2}(\bx + \eta \bv))$ where $\bv$ is the stochastic gradient of $f_1$ at point $\bx$, then we have
$\bx^{(t+1)} = \bx^{(t)} - \eta_t G_{\eta_t}(\bx^{(t)})$.

To analyze the convergence of the proximal gradient descent, we have the following proposition from \cite{chekuri2014submodular}.
\begin{proposition}\label{prop-submodular-gradient}
    If the function $f(\bx)$ is DR-submodular and monotone, we have
    \[\langle\nabla f(\bx), \bx-\by\rangle \le 2f(\bx) - f(\bx\land \by) -f(\bx\lor \by).\]
\end{proposition}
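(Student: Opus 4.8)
The plan is to reduce the inequality to the fact that a differentiable DR-submodular function is concave along every non-negative direction, and then apply the first-order concavity bound twice. First I would record the elementary lattice identities. Writing $\bu := \bx - (\bx\land\by)$ and $\bw := \by - (\bx\land\by)$, a coordinatewise check gives $\bu = (\bx-\by)^+ \ge \vzero$ and $\bw = (\by-\bx)^+ \ge \vzero$, together with $\bx\lor\by = \bx + \bw$ and $\bx - \by = \bu - \bw$. Consequently the left-hand side splits as $\langle\nabla f(\bx),\bx-\by\rangle = \langle\nabla f(\bx),\bu\rangle - \langle\nabla f(\bx),\bw\rangle$, so it suffices to bound the two pieces separately.

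The key lemma I would establish is: if $f$ is differentiable and DR-submodular, then for any $\bz\in\cD$ and any \emph{sign-definite} displacement $\boldsymbol{d}$ (i.e. $\boldsymbol{d}\ge\vzero$ or $\boldsymbol{d}\le\vzero$) with $\bz+\boldsymbol{d}\in\cD$, one has $f(\bz+\boldsymbol{d}) \le f(\bz) + \langle\nabla f(\bz),\boldsymbol{d}\rangle$. To see this, set $\phi(t) = f(\bz+t\boldsymbol{d})$ on $[0,1]$; then $\phi'(t) = \sum_i d_i\,\partial_i f(\bz+t\boldsymbol{d})$. DR-submodularity is exactly the statement that each partial $\partial_i f$ is non-increasing in every coordinate, so when $\boldsymbol{d}$ is sign-definite the map $t\mapsto \bz+t\boldsymbol{d}$ is coordinatewise monotone and each summand $d_i\,\partial_i f(\bz+t\boldsymbol{d})$ is non-increasing in $t$; hence $\phi'$ is non-increasing, $\phi$ is concave, and the first-order concavity inequality $\phi(1)\le\phi(0)+\phi'(0)$ yields the claim. (Monotonicity of $f$ is not actually needed for this inequality; only DR-submodularity enters.)

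With the lemma in hand I would apply it twice, both times at the base point $\bz=\bx$. Taking $\boldsymbol{d} = (\bx\land\by)-\bx = -\bu \le \vzero$ gives $f(\bx\land\by)\le f(\bx) - \langle\nabla f(\bx),\bu\rangle$, that is, $\langle\nabla f(\bx),\bu\rangle \le f(\bx)-f(\bx\land\by)$. Taking $\boldsymbol{d}=\bw\ge\vzero$ gives $f(\bx\lor\by)=f(\bx+\bw)\le f(\bx)+\langle\nabla f(\bx),\bw\rangle$, that is, $-\langle\nabla f(\bx),\bw\rangle \le f(\bx)-f(\bx\lor\by)$. Adding the two inequalities and using the split of the left-hand side from the first paragraph yields exactly $\langle\nabla f(\bx),\bx-\by\rangle \le 2f(\bx)-f(\bx\land\by)-f(\bx\lor\by)$.

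The main obstacle is the key lemma, specifically verifying that DR-submodularity forces $\phi'$ to be non-increasing \emph{precisely because} $\boldsymbol{d}$ is sign-definite: if $\boldsymbol{d}$ had mixed signs the monotonicity of each term would fail, which is exactly why the meet/join decomposition into the two non-negative pieces $\bu$ and $\bw$ is indispensable. A secondary technical point is the domain: the argument requires $\bx\land\by$, $\bx\lor\by$, and the connecting segments to lie in $\cD$, which holds for the box-type domains ($\R^d_+$ or $[0,1]^d$) considered here but would in general demand that $\cD$ be a sublattice.
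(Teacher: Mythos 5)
Your proof is correct. Note, however, that the paper itself gives no proof of this proposition: it is imported verbatim from \cite{chekuri2014submodular} (Proposition~\ref{prop-submodular-gradient} is stated as a quoted result and used as a black box inside the proof of Theorem~\ref{thm-proximal-grad}). So there is no internal argument to compare against; what you have written is a self-contained derivation of the cited fact, and it is the standard one: split $\bx-\by$ into the sign-definite pieces $\bu=(\bx-\by)^+$ and $\bw=(\by-\bx)^+$, observe that the paper's definition of DR-submodularity, specialized to differentiable $f$, says exactly that each $\partial_i f$ is antitone in every coordinate, conclude that $t\mapsto f(\bz+t\boldsymbol{d})$ is concave for any sign-definite direction $\boldsymbol{d}$ (both summand monotonicity checks, for $\boldsymbol{d}\ge\vzero$ and $\boldsymbol{d}\le\vzero$, go through as you state), and apply the first-order bound twice at the base point $\bx$ toward $\bx\land\by$ and $\bx\lor\by$. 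Two of your side remarks are worth keeping: (i) monotonicity of $f$ is indeed not used in this inequality --- in the paper it enters only downstream, where $f_1(\bx\lor\bz)\ge f_1(\bz)$ and $f_1(\bx\land\bz)\ge 0$ are used to weaken the right-hand side to $2f_1(\bx)-f_1(\bz)$; and (ii) the lattice-domain caveat is real but harmless here, since $\hat g_{\cR}$ is defined and DR-submodular on all of the box-type $\cD$ ($\R^d_+$ or $[0,1]^d$), so $\bx\land\by$, $\bx\lor\by$, and the connecting segments stay in the domain even when they leave the constrained set $\cP$.
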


\begin{lemma}\label{lem-subgradient}
    Let $\bu = G_{\eta}(\bx) + \bv$, then $\bu\in -\partial f_2(\bx-\eta G_{\eta}(\bx))$, i.e. $\bu$ is the subgradient of $-f_2$ at point $\bx-\eta G_{\eta}(\bx)$. Here $\bv$ is the stochastic gradient of $f_1$ at point $\bx$, and $G_{\eta}(\bx) = \frac{1}{\eta}(\bx - \text{prox}_{-\eta f_2}(\bx + \eta \bv))$.
\end{lemma}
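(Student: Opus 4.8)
The plan is to unfold the definition of the proximal operator and read off the first-order optimality condition of the strongly convex minimization problem that defines it. Write $\bz := \text{prox}_{-\eta f_2}(\bx + \eta \bv) = \bx - \eta G_{\eta}(\bx)$, so that the claim to establish is $\bu \in -\partial f_2(\bz)$. By definition, $\bz$ is the unique minimizer over $\by\in\cP$ of $\psi(\by) := -\eta f_2(\by) + \frac{1}{2}||\bx + \eta\bv - \by||^2_2$. Adopting the convention already stated in the excerpt that $f_2 \equiv -\infty$ off $\cP$ (so that $-f_2$ is a proper lower-semicontinuous convex function and the constraint $\by\in\cP$ is absorbed into the objective), $\psi$ is convex and $\bz$ minimizes it over all of $\R^d$.

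First I would write down the optimality condition $\vzero \in \partial\psi(\bz)$. The quadratic term is smooth with gradient $\by \mapsto \by - (\bx + \eta\bv)$, and the nonsmooth term $-\eta f_2$ contributes $\eta\,\partial(-f_2)$ (using $\partial(\eta \phi) = \eta\,\partial\phi$ for $\eta>0$). Since one summand is differentiable, the subdifferential sum rule applies cleanly and gives $\vzero \in \eta\,\partial(-f_2)(\bz) + (\bz - \bx - \eta\bv)$, equivalently $\frac{1}{\eta}(\bx + \eta\bv - \bz) \in \partial(-f_2)(\bz)$.

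It then remains to identify the left-hand side with $\bu$. Because $\bz = \bx - \eta G_{\eta}(\bx)$, we have $\bx - \bz = \eta G_{\eta}(\bx)$, so $\frac{1}{\eta}(\bx + \eta\bv - \bz) = \frac{1}{\eta}(\bx - \bz) + \bv = G_{\eta}(\bx) + \bv = \bu$. Hence $\bu \in \partial(-f_2)(\bz) = -\partial f_2(\bx - \eta G_{\eta}(\bx))$, which is exactly the assertion of the lemma.

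The only delicate point is the subdifferential bookkeeping: since $f_2$ is concave one must work throughout with $-f_2$ as a convex function and keep the sign convention straight (so that $-\partial f_2$ denotes the subdifferential of $-f_2$), and since the feasibility constraint is folded into $-f_2$ via the $-\infty$ convention, the set $\partial(-f_2)(\bz)$ implicitly carries the normal-cone contribution of $\cP$ at $\bz$. Both are standard once the prox is expressed as an unconstrained convex minimization, so I expect no genuine obstacle here; the lemma is essentially the fixed-point/optimality characterization of the proximal step, and its purpose is to supply the subgradient inequality for $f_2$ that will drive the convergence analysis of Theorem~\ref{thm-proximal-grad}.
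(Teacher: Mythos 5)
Your proposal is correct and follows essentially the same route as the paper: both reduce the claim to the first-order optimality condition of the convex problem defining $\text{prox}_{-\eta f_2}(\bx+\eta\bv)$ (the paper phrases it as the generic fact that $\bw = \text{prox}_{g}(\ba)$ implies $\ba-\bw\in\partial g(\bw)$, then substitutes $g=-\eta f_2$ and $\ba = \bx+\eta\bv$ and divides by $\eta$). Your additional care about the subdifferential sum rule and the $-\infty$ extension absorbing the constraint $\by\in\cP$ is consistent with the conventions the paper already adopts.
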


\begin{proof}
    First it is easy to show that if $g$ is convex and $\bu = \text{prox}_{g}(\bx)$, then we have $\bx-\bu\in\partial g(\bu)$. This is due to the fact that $\bu$ minimize the function $g_1(\by) = g(\by) + \frac{1}{2}||\by-\bx||_2^2$, and $g_1(\by)$ is convex in $\by$. Then we have
    \[0\in\partial g(\bu) + \bu-\bx \Rightarrow \bx-\bu \in \partial g(\bu).\]
    Then note that $\bx-\eta G_{\eta}(\bx) = \text{prox}_{-\eta f_2}(\bx+\eta \bv)$, then we have
    \[\bx+\eta \bv - \text{prox}_{-\eta f_2}(\bx+\eta\nabla f(\bx)) \in -\eta\partial f_2(\bx-\eta G_{\eta}(\bx)),\]
    and rearranging the terms we have
    \[\eta(\bv + G_{\eta}(\bx)) \in -\eta\partial f_2(\bx-\eta G_{\eta}(\bx)),\]
    which concludes the proof.
\end{proof}

\begin{lemma}
    Suppose $f_1(\bx)$ is a monotone DR-submodular function on convex set $\cP$ and $f_2(\bx)$ is concave on set $\cP$. Note that we assume $f_2(\bx) = -\infty$ for all $x\notin \cP$. If $f_1(\bx)$ is $\beta$-smooth, and $\eta \le \frac{1}{\beta}$ is the step size, then for any $\bx,\bz\in\cP$, we have
    \begin{align*}(f_1+f_2)(\bx-\eta G_{\eta}(\bx)) + (f_1+f_2)(\bx) \ge& (f_1+f_2)(\bz) + \frac{\eta}{2}||G_{\eta}(\bx)||_2^2 - G_{\eta}(\bx)^T(\bx-\bz)\\
    &\quad +f_2(\bx) - (\bv - \nabla f_1(\bx))^T(\bx-\eta G_{\eta}(\bx)-\bz).
    \end{align*}
\end{lemma}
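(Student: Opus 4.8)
The plan is to establish the inequality by combining three ingredients---$\beta$-smoothness of $f_1$, concavity of $f_2$ together with the subgradient characterization from Lemma~\ref{lem-subgradient}, and the DR-submodular/monotone structure of $f_1$ captured by Proposition~\ref{prop-submodular-gradient}---and then to reduce everything, after algebraic cancellation, to a single pointwise inequality on $f_1$. Throughout I would write $\bx^+ = \bx - \eta G_{\eta}(\bx) = \text{prox}_{-\eta f_2}(\bx + \eta\bv)$ for brevity, so that $\bx^+ - \bx = -\eta G_{\eta}(\bx)$ and $\bz - \bx^+ = (\bz - \bx) + \eta G_{\eta}(\bx)$; this last substitution is what drives all the cancellations at the end.

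First I would use the smoothness of $f_1$. The descent lemma for a $\beta$-smooth function gives $f_1(\bx^+) \ge f_1(\bx) + \langle \nabla f_1(\bx), \bx^+ - \bx\rangle - \frac{\beta}{2}||\bx^+-\bx||_2^2$, and substituting $\bx^+ - \bx = -\eta G_{\eta}(\bx)$ together with $\eta \le 1/\beta$ (so that $\beta\eta^2 \le \eta$) yields $f_1(\bx^+) \ge f_1(\bx) - \eta\langle\nabla f_1(\bx), G_{\eta}(\bx)\rangle - \frac{\eta}{2}||G_{\eta}(\bx)||_2^2$. Second, by Lemma~\ref{lem-subgradient} the vector $\bu = G_{\eta}(\bx)+\bv$ lies in $-\partial f_2(\bx^+)$; reading this as a subgradient of the convex function $-f_2$ at $\bx^+$ gives the supporting inequality $f_2(\bx^+) \ge f_2(\bz) + \langle G_{\eta}(\bx)+\bv,\; \bz - \bx^+\rangle$ for every $\bz \in \cP$. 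Third, Proposition~\ref{prop-submodular-gradient} gives $\langle \nabla f_1(\bx), \bx - \bz\rangle \le 2 f_1(\bx) - f_1(\bx\land\bz) - f_1(\bx\lor\bz)$; since $f_1$ is monotone and non-negative (the latter inherited from the hypotheses of Theorem~\ref{thm-proximal-grad}) we have $f_1(\bx\lor\bz) \ge f_1(\bz)$ and $f_1(\bx\land\bz) \ge 0$, so this sharpens to the key pointwise bound $\langle\nabla f_1(\bx), \bx-\bz\rangle \le 2 f_1(\bx) - f_1(\bz)$.

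Finally I would add the smoothness and concavity bounds and collect terms. Splitting $\bv = \nabla f_1(\bx) + (\bv - \nabla f_1(\bx))$, the stochastic-error part paired with $\bz - \bx^+$ in the concavity bound equals $-(\bv-\nabla f_1(\bx))^T(\bx^+ - \bz)$, which is exactly the error term appearing in the claim, so these identical contributions cancel from both sides. Expanding $\bz - \bx^+ = (\bz-\bx)+\eta G_{\eta}(\bx)$ in the deterministic remainder, the $\langle \nabla f_1(\bx), G_{\eta}(\bx)\rangle$ terms cancel and the $\frac{\eta}{2}||G_{\eta}(\bx)||_2^2$ and $\langle G_{\eta}(\bx), \bz-\bx\rangle$ terms cancel, leaving precisely the requirement $2f_1(\bx) + \langle\nabla f_1(\bx),\bz-\bx\rangle \ge f_1(\bz)$, which is the key bound just established. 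I expect the main obstacle to be not any single inequality but the careful bookkeeping of the inner-product terms: one must track the stochastic-gradient error $\bv - \nabla f_1(\bx)$ separately so that it lands exactly on the advertised error term, and verify that the substitution for $\bz - \bx^+$ makes all $G_{\eta}(\bx)$-dependent quantities cancel, collapsing the whole estimate to the single DR-submodular inequality $\langle\nabla f_1(\bx), \bx-\bz\rangle \le 2f_1(\bx) - f_1(\bz)$.
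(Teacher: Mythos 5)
Your proof is correct and follows essentially the same route as the paper's: the quadratic bound from $\beta$-smoothness combined with $\eta \le 1/\beta$, the subgradient inequality for $-f_2$ at $\bx-\eta G_{\eta}(\bx)$ via Lemma~\ref{lem-subgradient}, splitting off the stochastic error $\bv-\nabla f_1(\bx)$ so it lands on the advertised term, and closing with $\langle\nabla f_1(\bx),\bx-\bz\rangle\le 2f_1(\bx)-f_1(\bz)$ from Proposition~\ref{prop-submodular-gradient} together with monotonicity and non-negativity of $f_1$. The only difference is cosmetic: you derive lower bounds on $(f_1+f_2)(\bx-\eta G_{\eta}(\bx))$ directly rather than upper bounds on its negation, which in fact handles the $\beta\eta^2\le\eta$ relaxation slightly more cleanly than the paper's intermediate display.
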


\begin{proof}
    First note that $x-\eta G_{\eta}(\bx)\in\cP$, since $x-\eta G_{\eta}(\bx) = \text{prox}_{-\eta f_2}(\bx^{(k)} + \eta \bv^{(k)})$ where $\bv^{(k)}$ is the stochastic gradient of $f_1$ at point $\bx^{(k)}$, and we know that $-\eta f_2(\bx) = +\infty$ for all $\bx\notin\cP$.
    
    From the smoothness of function $f_1$ and the convexity of $-f_2$ and the previous lemma(Lemma \ref{lem-subgradient}), we have
    \begin{align*}
        &-(f_1+f_2)(\bx-\eta G_{\eta}(\bx))\\
        =& -f_1(\bx-\eta G_{\eta}(\bx)) -f_2(\bx-\eta G_{\eta}(\bx)) \\
        \le& -f_1(\bx) + \langle -\nabla f_1(\bx), -\eta G_{\eta}(\bx)\rangle + \frac{\beta}{2}||\eta G_{\eta}(\bx)||_2^2 -f_2(\bx-\eta G_{\eta}(\bx)) \\
        \le& -f_1(\bx) + \eta \nabla f_1(\bx)^T G_{\eta}(\bx) + \frac{\beta}{2}||\eta G_{\eta}(\bx)||_2^2\\
        &\quad - f_2(\bz) + (\bv + G_{\eta}(\bx))^T(\bx-\eta G_{\eta}(\bx)-\bz) \\
        =& -f_1(\bx) + \eta \nabla f_1(\bx)^T G_{\eta}(\bx) + \frac{\beta}{2}||\eta G_{\eta}(\bx)||_2^2\\
        &\quad - f_2(\bz) + (\nabla f_1(\bx) + G_{\eta}(\bx))^T(\bx-\eta G_{\eta}(\bx)-\bz) + (\bv - \nabla f_1(\bx))^T(\bx-\eta G_{\eta}(\bx)-\bz) \\
        =& -f_1(\bx) + \nabla f_1(\bx)^T(\bx-\bz) - \frac{\beta}{2}||\eta G_{\eta}(\bx)||_2^2 -f_2(\bz) + G_{\eta}(\bx)^T(\bx-\bz)\\
        &\quad + (\bv - \nabla f_1(\bx))^T(\bx-\eta G_{\eta}(\bx)-\bz).
    \end{align*}
    Then from the proposition(Proposition \ref{prop-submodular-gradient})
    \[\langle\nabla f_1(\bx), \bx-\bz\rangle \le 2f_1(\bx) - f_1(\bx\land \bz) -f_1(\bx\lor \bz) \le 2f_1(\bx) - f_1(\bz),\]
    we have
    \begin{align*}
        -(f_1+f_2)(\bx-\eta G_{\eta}(\bx)) \le& -f_1(\bx) + \nabla f_1(\bx)^T(\bx-\bz) - \frac{\beta}{2}||\eta G_{\eta}(\bx)||_2^2 -f_2(\bz)  \\
        &\quad + G_{\eta}(\bx)^T(\bx-\bz)+ (\bv - \nabla f_1(\bx))^T(\bx-\eta G_{\eta}(\bx)-\bz)\\
        \le& -f_1(\bx) + 2f_1(\bx) - f_1(\bz) - \frac{\eta}{2}||G_{\eta}(\bx)||_2^2 -f_2(\bz) \\
        &\quad + G_{\eta}(\bx)^T(\bx-\bz)+ (\bv - \nabla f_1(\bx))^T(\bx-\eta G_{\eta}(\bx)-\bz)\\
        =& f_1(\bx) - (f_1+f_2)(\bz) + G_{\eta}(\bx)^T(\bx-\bz)- \frac{\eta}{2}||G_{\eta}(\bx)||_2^2\\
        &\quad + (\bv - \nabla f_1(\bx))^T(\bx-\eta G_{\eta}(\bx)-\bz).\\
    \end{align*}
    Rearranging the terms, we have
    \begin{align*}(f_1+f_2)(\bx-\eta G_{\eta}(\bx)) + (f_1+f_2)(\bx) \ge& (f_1+f_2)(\bz) + \frac{\eta}{2}||G_{\eta}(\bx)||_2^2 - G_{\eta}(\bx)^T(\bx-\bz)\\
    &\quad +f_2(\bx) - (\bv - \nabla f_1(\bx))^T(\bx-\eta G_{\eta}(\bx)-\bz).
    \end{align*}
\end{proof}

{\thmproximalconverge*}

\begin{proof}[Proof of Theorem \ref{thm-proximal-grad}]
    Note that we assume the function $f_2$ to be the concave extension of the original function $d$, i.e. $f_2(\bx) = -\infty$ for all $x\notin\cP$. Then we know that
    \[\bx^{(t+1)} = \text{prox}_{-\eta f_2}(\bx^{(t)} + \eta_t\bv^{(t)}) \in \cP,\]
    which means that $f_2(\bx^{(t+1)}) \ge 0$, for all $t=1,2,\dots,T$. Then in the previous lemma, let $\bz = \bx^*$ where $\bx^*$ maximize $f_1+f_2$ in the set $\cP$. 
    We first consider the case when we have exact gradient $\bv^{(t)} = \nabla f_1(\bx^{(t)})$ and we set $\eta_t = \eta = \frac{1}{\beta}$, and we have
    \begin{align*}
        &(f_1+f_2)(\bx^{(t+1)}) + (f_1+f_2)(\bx^{(t)})\\
        \ge& (f_1+f_2)(\bx^*) + \frac{\eta}{2}||G_{\eta}(\bx^{(t)})||_2^2 - G_{\eta}(\bx^{(t)})^T(\bx^{(t)}-\bx^*) +f_2(\bx^{(t)}) \\
        \ge& (f_1+f_2)(\bx^*) + \frac{\eta}{2}||G_{\eta}(\bx^{(t)})||_2^2 - G_{\eta}(\bx^{(t)})^T(\bx^{(t)}-\bx^*) \\
        =& (f_1+f_2)(\bx^*) + \frac{1}{2\eta}(\eta G_{\eta}(\bx^{(t)}))^T(\eta G_{\eta}(\eta \bx^{(t)}) - 2\bx^{(t)}+2\bx^*) \\
        =& (f_1+f_2)(\bx^{*}) + \frac{1}{2\eta}\left(||\eta G_{\eta}(\bx^{(t)}) - \bx^{(t)} + \bx^*||_2^2 - ||\bx^{(t)} - \bx^*||_2^2\right) \\
        =& (f_1+f_2)(\bx^{*}) + \frac{1}{2\eta}\left(||\bx^{(t+1)} - \bx^*||_2^2 - ||\bx^{(t)} - \bx^*||_2^2\right).
    \end{align*}
    Then we sum up the above inequalities and we get
    \begin{align*}
        &\sum_{t=0}^{T-1}((f_1+f_2)(\bx^{(t+1)}) + (f_1+f_2)(\bx^{(t)}))\\
        \ge& \sum_{t=0}^{T-1}\left((f_1+f_2)(\bx^{*}) + \frac{1}{2\eta}\left(||\bx^{(t+1)} - \bx^*||_2^2 - ||\bx^{(t)} - \bx^*||_2^2\right)\right) \\
        =& T(f_1+f_2)(\bx^*) + \frac{1}{2\eta}||\bx^{(T)} - \bx^*||_2^2 - \frac{1}{2\eta}||\bx^{(0)} - \bx^*||_2^2 \\
        \ge& T(f_1+f_2)(\bx^*) - \frac{1}{2\eta}||\bx^{(0)} - \bx^*||_2^2\\
        \ge& T(f_1+f_2)(\bx^*) - \frac{1}{2\eta}\Delta^2.
    \end{align*}
    Then, we complete the proof by the fact that
    \[\max_{t=0,1,2,\dots,T} h(x^{(t)}) \ge \frac{1}{2T}\sum_{t=0}^{T-1}(h(x^{(t+1)}) + h(x^{(t)})),\]
    and we have
    \[\max_{t=0,1,2,\dots,T} h(x^{(t)}) \ge \frac{1}{2}(f_1+f_2)(\bx^*) - \frac{\beta \Delta^2}{4T}.\]
    Then we prove the stochastic gradient case. We first have the following property \cite{combettes2005signal}: For convex function $\phi$ and any $\bx,\by$, we have
    \[||\text{prox}_{\phi}(\bx) - \text{prox}_{\phi}(\by)||_2 \le ||\bx-\by||_2.\]
    From the previous lemma and the previous property, we take the expectation of $\bv^{(t)}$ and we can get
    \begin{align*}
        &\E_{\bv^{(t)}}(f_1+f_2)(\bx^{(t+1)}) + (f_1+f_2)(\bx^{(t)})\\
        \ge& (f_1+f_2)(\bx^*) + \frac{\eta}{2}\E_{\bv^{(t)}}||G_{\eta}(\bx^{(t)})||_2^2 - \E_{\bv^{(t)}} G_{\eta}(\bx^{(t)})^T(\bx^{(t)}-\bx^*) + f_2(\bx^{(t)}) \\
        &\quad -\E_{\bv^{(t)}}(\bv^{(t)} - \nabla f_1(\bx^{(t)}))^T(\bx^{(t)}-\eta G_{\eta}(\bx^{(t)})-\bx^*)\\
        =& (f_1+f_2)(\bx^*) + \frac{\eta}{2}\E_{\bv^{(t)}}||G_{\eta}(\bx^{(t)})||_2^2 - \E_{\bv^{(t)}} G_{\eta}(\bx^{(t)})^T(\bx^{(t)}-\bx^*) + f_2(\bx^{(t)}) \\
        &\quad -\E_{\bv}(\bv^{(t)} - \nabla f_1(\bx^{(t)}))^T(\text{prox}_{-\eta f_2}(\bx^{(t)} + \eta \bv^{(t)})-\text{prox}_{-\eta f_2}(\bx^{(t)} + \eta \nabla f_1(\bx^{(t)})))\\
        \ge& (f_1+f_2)(\bx^*) + \frac{\eta}{2}\E_{\bv^{(t)}}||G_{\eta}(\bx^{(t)})||_2^2 - \E_{\bv^{(t)}} G_{\eta}(\bx^{(t)})^T(\bx^{(t)}-\bx^*) + f_2(\bx^{(t)}) \\
        &\quad -\eta \E_{\bv}||\bv^{(t)} - \nabla f_1(\bx^{(t)})||_2^2\\
        \ge& (f_1+f_2)(\bx^*) + \frac{\eta}{2}\E_{\bv^{(t)}}||G_{\eta}(\bx^{(t)})||_2^2 - \E_{\bv^{(t)}}G_{\eta}(\bx^{(t)})^T(\bx^{(t)}-\bx^*) -\sigma^2\eta \\
        =& (f_1+f_2)(\bx^*) + \frac{1}{2\eta}\E_{\bv^{(t)}}(\eta G_{\eta}(\bx^{(t)}))^T(\eta G_{\eta}(\eta \bx^{(t)}) - 2\bx^{(t)}+2\bx^*)-\sigma^2\eta \\
        =& (f_1+f_2)(\bx^{*}) + \frac{1}{2\eta}\E_{\bv^{(t)}}\left(||\eta G_{\eta}(\bx^{(t)}) - \bx^{(t)} + \bx^*||_2^2 - ||\bx^{(t)} - \bx^*||_2^2\right)-\sigma^2\eta \\
        =& (f_1+f_2)(\bx^{*}) + \frac{1}{2\eta}\E_{\bv^{(t)}}\left(||\bx^{(t+1)} - \bx^*||_2^2 - ||\bx^{(t)} - \bx^*||_2^2\right)-\sigma^2\eta.
    \end{align*}
    Then we take expectation through all the randomness and sum them up, we have
    \begin{align*}
        &\sum_{t=0}^{T-1}(\E(f_1+f_2)(\bx^{(t+1)}) + \E(f_1+f_2)(\bx^{(t)}))\\
        \ge& \sum_{t=0}^{T-1}\E\left((f_1+f_2)(\bx^{*}) + \frac{1}{2\eta}\left(||\bx^{(t+1)} - \bx^*||_2^2 - ||\bx^{(t)} - \bx^*||_2^2\right)\right)-T\sigma^2\eta \\
        =& T(f_1+f_2)(\bx^*) + \frac{1}{2\eta}\E||\bx^{(T)} - \bx^*||_2^2 - \frac{1}{2\eta}||\bx^{(0)} - \bx^*||_2^2-T\sigma^2\eta \\
        \ge& T(f_1+f_2)(\bx^*) - \frac{1}{2\eta}||\bx^{(0)} - \bx^*||_2^2-T\sigma^2\eta\\
        \ge& T(f_1+f_2)(\bx^*) - \frac{1}{2\eta}\Delta^2-T\sigma^2\eta.
    \end{align*}
    Then we plug in $\eta = 1/(\beta + \frac{\sigma}{\Delta}\sqrt{2T})$, we have
    \begin{align*}
        &\E\max_{t=0,1,2,\dots,T} (f_1+f_2)(\bx^{(t)})\\
        \ge& \max_{t=0,1,2,\dots,T} \E(f_1+f_2)(\bx^{(t)})\\
        \ge& \frac{1}{2T}\sum_{t=0}^{T-1}(\E(f_1+f_2)(\bx^{(t+1)}) + \E(f_1+f_2)(\bx^{(t)}))\\
        \ge& \frac{1}{2}(f_1+f_2)(\bx^*) - \frac{1}{2T}\left(\frac{1}{2\eta}\Delta^2+T\sigma^2\eta\right)\\
        \ge& \frac{1}{2}(f_1+f_2)(\bx^*) - \frac{\beta \Delta^2}{4T} - \frac{\sigma \Delta}{\sqrt{2T}}.
    \end{align*}
\end{proof}

\subsection{Proofs of Lemma \ref{lem:generalGradient}}
The following lemma is the more detailed version of Lemma~\ref{lem:generalGradient}.
\begin{lemma}[Detailed version of Lemma~\ref{lem:generalGradient}]\label{lem-lipschitz-smoothness-general}
If functions $h_v(\bx)$'s are $L_h$-Lipschitz, then function $\hat g_{\cR}(\bx)$ 
is $(\nu^{(1)}(\cR)nL_h)$-Lipschitz, and function $g(\bx)$ is $(n^2L_h)$-Lipschitz. 
If functions $h_v(\bx)$'s are $\beta_h$-smooth, then function $\hat g_{\cR}(\bx)$ is
$(\nu^{(1)}(\cR) n \beta_h + \nu^{(2)}(\cR) n L_h^2)$-smooth.
The gradient of function $\hat g_{\cR}(\bx)$ is
%
%
    \[\nabla \hat g_{\cR}(\bx) = \frac{n}{\theta}\sum_{R\in\cR}\sum_{v'\in R}\nabla h_{v'}(\bx)\left(\prod_{v\in R,v\neq v'}(1-h_v(\bx))\right),\]
    and can be computed in time $O(\sum_{R\in\cR}|R|(1+T_h))$ if we assume that the gradient of $h_v(\bx)$ can be generated in time $O(T_h)$. 
\end{lemma}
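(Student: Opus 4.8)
The plan is to establish the gradient formula first, since both the Lipschitz and the smoothness bounds follow from it. Writing $f_R(\bx) = 1 - \prod_{v\in R}(1-h_v(\bx))$ so that $\hat g_{\cR} = \frac{n}{\theta}\sum_{R\in\cR} f_R$, I would apply the product rule to $\prod_{v\in R}(1-h_v(\bx))$: differentiating term-by-term gives $\nabla f_R(\bx) = \sum_{v'\in R}\nabla h_{v'}(\bx)\prod_{v\in R, v\neq v'}(1-h_v(\bx))$, and summing over $\cR$ with the factor $n/\theta$ yields the stated formula. For the running time, the naive evaluation of all the leave-one-out products $\prod_{v\neq v'}(1-h_v(\bx))$ costs $O(|R|^2)$ per RR set; I would instead precompute prefix and suffix products of the $(1-h_v(\bx))$ along $R$ in $O(|R|)$ time, so that each leave-one-out product is read off in $O(1)$. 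Together with one gradient evaluation of $\nabla h_{v'}$ per element at cost $O(T_h)$, this gives $O(|R|(1+T_h))$ per RR set and hence $O(\sum_{R\in\cR}|R|(1+T_h))$ overall.

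For the Lipschitz bounds I would use the fact, noted in Section~\ref{sec:model}, that when the gradient exists the $L$-Lipschitz condition is equivalent to $\|\nabla f\|_2 \le L$ everywhere. Since every $h_v$ takes values in $[0,1]$, each factor $1-h_v(\bx)\in[0,1]$, so each leave-one-out product is at most $1$; combined with $\|\nabla h_{v'}(\bx)\|_2\le L_h$ and the triangle inequality, $\|\nabla\hat g_{\cR}(\bx)\|_2 \le \frac{n}{\theta}\sum_{R\in\cR}\sum_{v'\in R}L_h = n L_h \nu^{(1)}(\cR)$, giving the claimed constant for $\hat g_{\cR}$. For $g(\bx) = n\,\E_R[f_R(\bx)]$ the same per-RR-set bound gives $\|\nabla f_R(\bx)\|_2\le |R| L_h \le n L_h$ using $|R|\le n$, so by passing the expectation through the norm, $\|\nabla g(\bx)\|_2 \le n\cdot n L_h = n^2 L_h$.

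The smoothness bound is the main obstacle, because $h_v$ need not be twice differentiable, so I cannot simply bound a Hessian; instead I would bound $\|\nabla f_R(\bx)-\nabla f_R(\by)\|_2$ directly. Abbreviating $a_v = 1-h_v$, the per-element difference $\nabla h_{v'}(\bx)\prod_{v\neq v'}a_v(\bx) - \nabla h_{v'}(\by)\prod_{v\neq v'}a_v(\by)$ would be split, by adding and subtracting $\nabla h_{v'}(\bx)\prod_{v\neq v'}a_v(\by)$, into one piece controlled by the smoothness of $h_{v'}$ and one controlled by the change in the product. For the product piece I would use the telescoping inequality $|\prod_i u_i - \prod_i w_i|\le \sum_i|u_i-w_i|$ valid for factors in $[0,1]$, together with $|a_v(\bx)-a_v(\by)| = |h_v(\bx)-h_v(\by)| \le L_h\|\bx-\by\|_2$, to get $|\prod_{v\neq v'}a_v(\bx)-\prod_{v\neq v'}a_v(\by)| \le (|R|-1)L_h\|\bx-\by\|_2$. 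Bounding $\|\nabla h_{v'}(\bx)\|_2\le L_h$, $\prod_{v\neq v'}a_v(\by)\le 1$, and $\|\nabla h_{v'}(\bx)-\nabla h_{v'}(\by)\|_2\le\beta_h\|\bx-\by\|_2$, each $v'$ contributes at most $\bigl((|R|-1)L_h^2+\beta_h\bigr)\|\bx-\by\|_2$; summing over the $|R|$ elements shows $f_R$ is $(|R|^2 L_h^2 + |R|\beta_h)$-smooth. Finally, $\|\nabla\hat g_{\cR}(\bx)-\nabla\hat g_{\cR}(\by)\|_2 \le \frac{n}{\theta}\sum_{R\in\cR}(|R|^2 L_h^2+|R|\beta_h)\|\bx-\by\|_2 = (\nu^{(1)}(\cR)n\beta_h + \nu^{(2)}(\cR)n L_h^2)\|\bx-\by\|_2$, which is exactly the claimed smoothness constant. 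The only delicate point is getting the telescoping product bound and the add-subtract split right so that the two error sources combine cleanly into the $\beta_h$ and $L_h^2$ terms.
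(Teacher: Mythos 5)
Your proposal is correct, and for the core claims it follows the same route as the paper: the gradient formula via the product rule, the $\nu^{(1)}(\cR)nL_h$ bound by bounding each leave-one-out product by $1$ and each $\|\nabla h_{v'}\|_2$ by $L_h$, and the smoothness bound via the same add-and-subtract decomposition into a $\beta_h$ term and a product-difference term controlled by the telescoping inequality $|\prod_i u_i - \prod_i w_i|\le\sum_i|u_i-w_i|$ (your $(|R|-1)L_h$ versus the paper's slightly looser $|R|L_h$ is immaterial). Two sub-arguments differ genuinely. First, for the $O(|R|(1+T_h))$ cost per RR set, the paper computes the full product $\prod_{v\in R}(1-h_v(\bx))$ once and recovers each leave-one-out product by division, which forces a without-loss-of-generality detour to handle vanishing factors $1-h_v(\bx)=0$; your prefix/suffix-product scheme avoids division entirely and needs no such case analysis, which is cleaner. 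Second, for the $n^2L_h$ Lipschitz constant of $g$, the paper expands $\nabla g(\bx)=\sum_{u'}f_{u'}(\bx)\nabla h_{u'}(\bx)$ over all $2^n$ seed sets and bounds the coefficient $|f_{u'}(\bx)|\le n$ by a marginal-influence argument, whereas you pass through the RIS identity $g(\bx)=n\,\E_R\!\left[1-\prod_{v\in R}(1-h_v(\bx))\right]$ and reuse the per-RR-set bound $\|\nabla f_R\|_2\le|R|L_h\le nL_h$. Your route is shorter and reuses machinery already in place; its only extra obligation is justifying the exchange of gradient and expectation over $R$, which is harmless here since the expectation is a finite sum of smooth functions with uniformly bounded gradients.
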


\begin{proof}[Proof of Lemma \ref{lem-lipschitz-smoothness-general}]
    First we have the following formula for the gradient of $\hat g_{\cR}(\bx)$.
    \begin{align*}
        \nabla \hat g_{\cR}(\bx) =& \nabla\frac{n}{\theta}\sum_{R\in\cR}\left(1- \prod_{v\in R}(1-h_v(\bx))\right) \\
        =&\frac{n}{\theta}\sum_{R\in\cR}\nabla\left(1-\prod_{v\in R}(1-h_v(\bx))\right) \\
        =&\frac{n}{\theta}\sum_{R\in\cR}\sum_{v'\in R}\nabla h_{v'}(\bx)\left(\prod_{v\in R,v\neq v'}(1-h_v(\bx))\right).
    \end{align*}
    Next we show that we can generate the exact gradient of $\hat g_{\cR}(\bx)$ in $O(\sum_{R\in\cR}|R|)$ time(assuming that generating the gradient of $h_v$ needs $O(1)$ time). First without loss of generality, we can assume that $(1-h_v(\bx) \neq 0$. Otherwise, if $1-h_u(\bx) = 0$ and $u\in R$, then $\prod_{v\in R,v\neq u}(1-h_u(\bx)) = 0$, and the problem is simpler. Then we can compute $\prod_{v\in R}(1-h_v(\bx))$ in $O(|R|)$ time and then compute $\prod_{v\in R,v\neq v'}(1-h_v(\bx))$ in $O(1)$ time. Then computing $\sum_{v'\in R}\nabla h_{v'}(\bx)\left(\prod_{v\in R,v\neq v'}(1-h_v(\bx))\right)$ needs another $O(|R| T_h)$ time, and the total time complexity to compute the gradient $\nabla \hat g_{\cR}(\bx)$ is $O(\sum_{R\in\cR}|R|(1+T_h))$. 
    Then we compute the gradient of $g(\bx)$.
    \begin{align*}
    \nabla g(\bx) =& \sum_{S\subseteq V}\sigma(S)\left(\sum_{u'\in S}\nabla_x h_{u'}(\bx)\left(\prod_{u\in S,u\neq u'}h_u(\bx)\right)\left(\prod_{v\notin S}(1-h_v(\bx))\right)\right. \\
    &\quad\quad\quad\quad - \left.\sum_{v'\notin S}\nabla_x h_{v'}(\bx)\left(\prod_{u\in S}h_u(\bx)\right)\left(\prod_{v\notin S,v\neq v'}(1-h_v(\bx))\right) \right) \\
    =& \sum_{u'\in V}\left[\sum_{S:u'\in S}\sigma(S)\left(\prod_{u\in S,u\neq u'}h_u(\bx)\right)\left(\prod_{v\notin S}(1-h_v(\bx))\right)\cdot\nabla h_{u'}(\bx)\right. \\
    &\quad\quad\quad\quad - \left.\sum_{T:u'\notin T}\sigma(T)\left(\prod_{u\in T}h_u(\bx)\right)\left(\prod_{v\notin T,v\neq u'}(1-h_v(\bx))\right)\cdot\nabla h_{u'}(\bx)\right] \\
    =& \sum_{u'\in V}f_{u'}(\bx) \nabla h_{u'}(\bx),
\end{align*}
where $f_{u'}(\bx)$ is defined as
\begin{align*}
    f_{u'}(\bx) :=& \sum_{S:u'\in S}\sigma(S)\left(\prod_{u\in S,u\neq u'}h_u(\bx)\right)\left(\prod_{v\notin S}(1-h_v(\bx))\right) \\
    &\quad - \sum_{T:u'\notin T}\sigma(T)\left(\prod_{u\in T}h_u(\bx)\right)\left(\prod_{v\notin T,v\neq u'}(1-h_v(\bx))\right).
\end{align*}
    We know that
    \begin{align*}
        &|f_{u'}(\bx)|\\
        =& \biggr|\sum_{S:u'\in S}\sigma(S)\left(\prod_{u\in S,u\neq u'}h_u(\bx)\right)\left(\prod_{v\notin S}(1-h_v(\bx))\right)  - \sum_{T:u'\notin T}\sigma(T)\left(\prod_{u\in T}h_u(\bx)\right)\left(\prod_{v\notin T,v\neq u'}(1-h_v(\bx))\right)\biggr| \\
    \le& \biggr|\sum_{S:u'\in S}\sigma(S)\left(\prod_{u\in S,u\neq u'}h_u(\bx)\right)\left(\prod_{v\notin S}(1-h_v(\bx))\right)\biggr|  + \biggr| \sum_{T:u'\notin T}\sigma(T)\left(\prod_{u\in T}h_u(\bx)\right)\left(\prod_{v\notin T,v\neq u'}(1-h_v(\bx))\right)\biggr| \\
    =& \sum_{S:u'\in S}\sigma(S)\left(\prod_{u\in S,u\neq u'}h_u(\bx)\right)\left(\prod_{v\notin S}(1-h_v(\bx))\right)  + \sum_{T:u'\notin T}\sigma(T) \left(\prod_{u\in T}h_u(\bx)\right)\left(\prod_{v\notin T,v\neq u'}(1-h_v(\bx))\right) \\
    \le& n\sum_{S:u'\in S}\left(\prod_{u\in S,u\neq u'}h_u(\bx)\right)\left(\prod_{v\notin S}(1-h_v(\bx))\right) + n\sum_{T:u'\notin T}\left(\prod_{u\in T}h_u(\bx)\right)\left(\prod_{v\notin T,v\neq u'}(1-h_v(\bx))\right) \\
    =& n h_{u'}(\bx) + n(1-h_{u'}(\bx))\\
    =& n.
    \end{align*}
    Then we have
    \begin{align*}
        ||\nabla g(\bx)||_2 =& ||\sum_{u'\in V}f_{u'}(\bx) \nabla h_{u'}(\bx)||_2 \\
        \le& \sum_{u'\in V}|f_{u'}(\bx)|\cdot ||\nabla h_{u'}(\bx)||_2 \\
        \le& \sum_{u'\in V}n L_h \\
        \le& n^2 L_h.
    \end{align*}
    Then we know that $g(\bx)$ is $2n^2L_h$-Lipschitz. We also have
    \begin{align*}
        ||\nabla\hat g_{\cR}(\bx)||_2 =& \biggr|\biggr| \frac{n}{\theta}\sum_{R\in\cR}\sum_{v'\in R}\nabla h_{v'}(\bx)\left(\prod_{v\in R,v\neq v'}(1-h_v(\bx))\right)\biggr|\biggr|_2 \\
        \le& \frac{n}{\theta}\sum_{R\in\cR}\sum_{v'\in R}||\nabla h_{v'}(\bx)||_2\left(\prod_{v\in R,v\neq v'}(1-h_v(\bx))\right) \\
        \le& \nu^{(1)}(\cR) n L_h.
    \end{align*}
    So the function $\hat g_{\cR}(\bx)$ is also $n^2L_h$-Lipschitz.
    Then we show the smoothness of the function $\hat g_{\cR}(\bx)$. We have
    \begin{align*}
        &||\nabla \hat g_{\cR}(\bx) - \nabla \hat g_{\cR}(\by)||_2\\
        =& \biggr|\biggr|\frac{n}{\theta}\sum_{R\in\cR}\sum_{v'\in R}\nabla h_{v'}(\bx)\left(\prod_{v\in R,v\neq v'}(1-h_v(\bx))\right) - \frac{n}{\theta}\sum_{R\in\cR}\sum_{v'\in R}\nabla h_{v'}(\by)\left(\prod_{v\in R,v\neq v'}(1-h_v(\by))\right)\biggr|\biggr|_2 \\
        \le& \frac{n}{\theta}\sum_{R\in\cR}\sum_{v'\in R}\biggr|\biggr|\nabla h_{v'}(\bx)\left(\prod_{v\in R,v\neq v'}(1-h_v(\bx))\right)- \nabla h_{v'}(\by)\left(\prod_{v\in R,v\neq v'}(1-h_v(\by))\right)\biggr|\biggr|_2\\
        =& \frac{n}{\theta}\sum_{R\in\cR}\sum_{v'\in R}\biggr|\biggr|\nabla h_{v'}(\bx)\left(\prod_{v\in R,v\neq v'}(1-h_v(\bx))\right) - \nabla h_{v'}(\by)\left(\prod_{v\in R,v\neq v'}(1-h_v(\bx))\right) \\
        &\quad + \nabla h_{v'}(\by)\left(\prod_{v\in R,v\neq v'}(1-h_v(\bx))\right)- \nabla h_{v'}(\by)\left(\prod_{v\in R,v\neq v'}(1-h_v(\by))\right)\biggr|\biggr|_2\\
        \le&  \underbrace{\frac{n}{\theta}\sum_{R\in\cR}\sum_{v'\in R}\biggr|\biggr|\nabla h_{v'}(\bx)\left(\prod_{v\in R,v\neq v'}(1-h_v(\bx)) \right) - \nabla h_{v'}(\by)\left(\prod_{v\in R,v\neq v'}(1-h_v(\bx))\right)\biggr|\biggr|_2}_{\mathbb A}\\
        &\quad + \underbrace{\frac{n}{\theta}\sum_{R\in\cR}\sum_{v'\in R}\biggr|\biggr|\nabla h_{v'}(\by)\left(\prod_{v\in R,v\neq v'}(1-h_v(\bx))\right) - \nabla h_{v'}(\by)\left(\prod_{v\in R,v\neq v'}(1-h_v(\by))\right)\biggr|\biggr|_2}_{\mathbb B}.
    \end{align*}
    For term $\mathbb A$, we have
    \begin{align*}
        \mathbb A =& \frac{n}{\theta}\sum_{R\in\cR}\sum_{v'\in R}\biggr|\prod_{v\in R,v\neq v'}(1-h_v(\bx))\biggr|\cdot || \nabla h_{v'}(\bx) - \nabla h_{v'}(\by)||_2 \\
        \le& \frac{n}{\theta}\sum_{R\in\cR}\sum_{v'\in R} \cdot 1\cdot \beta_h ||\bx - \by||_2 \\
        \le& \beta_h \nu^{(1)}(\cR)  n ||\bx - \by||_2,
    \end{align*}
    where we use the assumption that $h_v(\bx)$ is $\beta_h$-smooth. As for the term $\mathbb B$, we first have
    \begin{align*}
        &\biggr|\prod_{v\in R\setminus\{v'\}=\{v_1,\dots,v_{|R|-1}\}}(1-h_v(\bx)) - \prod_{v\in R\setminus\{v'\}}(1-h_v(\by))\biggr| \\
        =& \biggr|\prod_{v\in R\setminus\{v'\}}(1-h_v(\bx)) - \prod_{i=1}^{|R|-2}(1-h_{v_i}(\bx))\cdot (1-h_{v_{|R|-1}}(\by)\\
        &\quad +  \prod_{i=1}^{|R|-2}(1-h_{v_i}(\bx))\cdot (1-h_{v_{|R|-1}}(\by) - \prod_{i=1}^{|R|-3}(1-h_{v_i}(\bx))\cdot \prod_{j=|R|-2}^{|R|-1}(1-h_{v_j}(\by))\\
        &\quad +\cdots+(1-h_{v_1}(\bx))\cdot\prod_{j=2}^{|R|-1}(1-h_{v_j}(\by))- \prod_{v\in R\setminus\{v'\}}(1-h_v(\by))\biggr|\\
        \le& \sum_{i=1}^{|R|-1}|1-h_{v_i}(\bx)-1+h_{v_i}(\by)| \\
        \le& |R|L_h ||\bx-\by||_2,
    \end{align*}
    where the last inequality comes from the $L_h$-lipschitz property of the function $h_v(\bx)$. Then we have
    \begin{align*}
        \mathbb B \le& \frac{n}{\theta}\sum_{R\in\cR}\sum_{v'\in R}\biggr|\prod_{v\in R\setminus\{v'\}=\{v_1,\dots,v_{|R|-1}\}}(1-h_v(\bx)) - \prod_{v\in R\setminus\{v'\}}(1-h_v(\by))\biggr|\cdot ||\nabla h_{v'}(\by)||_2\\
        \le& \frac{n}{\theta}\sum_{R\in\cR}\sum_{v'\in R} |R|L_h ||\bx-\by||_2\cdot ||\nabla h_{v'}(\by)||_2\\
        \le& \nu^{(2)}(\cR) n L_h^2 ||\bx-\by||_2.
    \end{align*}
    Then we know that the function is $(\nu^{(1)}(\cR) n \beta_h + \nu^{(2)}(\cR) n L_h^2)$-smooth.
\end{proof}

\subsection{Proof of Lemma \ref{lem-proximal-onenorm-twonorm}}

{\lemproximalonenorm*}

\begin{proof}[Proof of Lemma \ref{lem-proximal-onenorm-twonorm}]
    First, it is easy to know that if $c(\bx) = ||\bx||_2$ and $\cD = \R_{+}^d$, then the proximal step can be finished in time $O(d)$. In this case, the set $\cP$ is defined as $\cP = \{\bx|\ ||\bx||_2\le k,\bx\succeq 0\}$, and proximal step is defined as
    \[\text{prox}_{-\eta s}(\bx) := \argmin_{\by\in\cP}-\eta s(\by) + \frac{1}{2}||\bx-\by||^2 = \argmin_{\by\in\cP}\eta\lambda ||\by||_2 + \frac{1}{2}||\bx-\by||^2.\]
    It is obvious that $\by$ should lies in the line generated by $0$ and $\bx$, and we can solve for $\by$ by using the basic technique for solving optimal value of a uni-variate quadratic function.
    
    Then we show that how to do the proximal step when $c(\bx) = ||\bx||_1$ and $\cD = \R_{+}^d$. In this case, we know that $\cP = \{\bx|\ ||\bx||_1\le k,\bx\succeq 0\}$, and we want to find $\by\in\cP$ to minimize
    \[\text{prox}_{-\eta s}(\bx) := \argmin_{\by\in\cP}-\eta s(\by) + \frac{1}{2}||\bx-\by||^2 = \argmin_{\by\in\cP}\eta\lambda ||\by||_1 + \frac{1}{2}||\bx-\by||^2.\]
    We use $\eta' = \eta\cdot\lambda$ for convenience. The proximal step can also be written as minimizing
    \[\min_{y_i \ge 0,\sum_i y_i \le k}\eta' \sum_{i}y_i + \frac{1}{2}\sum_{i}(x_i-y_i)^2.\]
    First, we have $y_i \le \max\{x_i-\eta',0\}$ for all $i$. If not, suppose $y_i > \max\{x_i-\eta',0\}$, then we let $y_i' = \max\{x_i-\eta',0\}$. The new solution has smaller value and is also feasible.
    
    Then we also have the following property: if $\by$ is optimal, then for any $i\neq j$, if $y_i,y_j\neq 0$, then we have $x_i - y_i = x_j-y_j$. Suppose that $x_i - y_i > x_j-y_j$, then let $\varepsilon$ be sufficiently small. Let $y_i' = y_i+\varepsilon$ and $y_j' = y_j-\varepsilon$, the new solution also lies in set $\cP$, but the function value of $\eta' \sum_{i}y_i + \frac{1}{2}\sum_{i}(x_i-y_i)^2$ become smaller.
    
    We also have: suppose $\by$ is optimal, then if $x_i \ge x_j$. If $y_i = 0$, then $y_j = 0$. Otherwise, suppose $y_i = 0$ but $y_j > 0$. We can pick sufficiently small $\varepsilon$ and let $y_i' = y_i+\varepsilon$ and $y_j' = y_j-\varepsilon$. The function value will decrease but the new solution is also feasible.
    
    We also have: suppose $\by$ is optimal, then if $y_i = 0$, then for any $j$ such that $y_j\neq 0$, $x_j - y_j \ge x_i - y_i$. Otherwise, we can find sufficiently small $\varepsilon$ and let $y_i' = y_i+\varepsilon$, $y_j' = y_j-\varepsilon$. The function value will decrease and the new solution is feasible.
    
    Given the previous properties, we have the following structure of optimal value $\by$. Suppose that $\{(i)\}$ is a permutation of $[d]$ such that $x_{(1)} \le x_{(2)} \le \cdots \le x_{(d)}$, then there exists $i_0\in [d]$ such that $y_{(i)} = 0$ for all $i\le i_0$, and $0 < y_{(i)} \le x_{(i)}-\lambda'$ for all $i > i_0$. Besides, for all $i,j > i_0$, we have $x_{(i)} - y_{(i)} =  x_{(j)} - y_{(j)} > x_{(i_0)} - y_{(i_0)}$. We also have one of the following: For all $i$, $y_i = \max\{x_i-\eta',0\}$; otherwise, $\sum_i y_i = k$. Then, there is only one $i_0$ that satisfy this the previous structure, and we show that we can use $O(d\log d)$ time to find $i_0$. We first use $O(d\log d)$ time to sort $x_i$ and get $x_{(i)}$. We first let $y_i = \max\{x_i-\eta',0\}$ and we test if $\sum_i y_i \le k$. If yes, then $\by$ is the optimal solution. Otherwise, we know that $\sum_i y_i = k$. We binary search for $(i_0)$, and we use $i_1$ to denote the binary search variable. Each time we set $y_{(j)} = 0$ for all $j \le i_1$, and we set $y_{(j)}$ such that $x_{(j)}-y_{(j)}$ are the same for all $j > i_1$ and $\sum_i y_i = k$. Then we test if $x_{(d)}-y_{(d)} \ge x_{(i_1)}-y_{(i_1)}$, and $y_{(j)} \ge 0$ for all $j > i_1$. If both are yes, then $i_0$ is $i_1$ and $\by$ is the optimal solution. If there exists $j>i_1$ such that $y_{(j)} < 0$, then $i_1$ should be larger. If $x_{(d)}-y_{(d)} < x_{(i_1)}-y_{(i_1)}$, then $i_1$ should be smaller. Each test needs time $O(d)$, and there are at most $O(\log d)$ binary search step, so the total complexity is $O(d \log d)$.
\end{proof}

\subsection{Proof of Lemma \ref{lem-concave-upperbound}}\label{sec-proof-concave-upperbound}

{\lemconcaveupperbound*}

\begin{proof}[Proof of Lemma \ref{lem-concave-upperbound}]
    First, we optimize the function $(\bar g_{\cR}+s)(\bx)$ in the set $\cP$. First, it is easy to know that the function $(\bar g_{\cR}+s)(\bx)$ is concave, since first we know that $s(\bx)$ is concave, $q_{v,j}(\bx)$ are concave for all $v,j$, the constant $1$ is also concave. Then because addition of 2 concave function is also concave, and the point-wise minimum of 2 concave function is also concave, so the function $(\bar g_{\cR}+s)(\bx)$ is concave. Then since we assume that the function $(\bar g_{\cR}+s)(\bx)$ is $L_{\bar g+s}$-Lipschitz, by the projected subgradient descent, in $O\left(\frac{L_{\bar g+s}}{\varepsilon^2}\right)$ iteration, we can get a solution $\by$ such that $(\bar g_{\cR}+s)(\by) \ge \max_{\bx\in\cP}(\bar g_{\cR}+s)(\bx) - \varepsilon$. Then we show that
    \[(\hat g_{\cR}+s)(\by) \ge \left(1-\frac{1}{e}\right)\max_{\bx\in\cP}(\hat g_{\cR}+s)(\bx) - \varepsilon.\]
    From Proposition \ref{prop-concave-upperbound}, we can know that
    \[\left(1-\frac{1}{e}\right)\bar g_{\cR}(\bx) \le \hat g_{\cR}(\bx) \le \bar g_{\cR}(\bx),\forall \bx\in\cP,\]
    and because $s(\bx)$ is non-negative on set $\cP$, we have
    \[\left(1-\frac{1}{e}\right)(\bar g_{\cR}+s)(\by) \le (\hat g_{\cR}+s)(\by) \le (\bar g_{\cR}+s)(\by).\]
    Then we have
    \begin{align*} 
    (\hat g_{\cR}+s)(\by) \ge& \left(1-\frac{1}{e}\right)(\bar g_{\cR}+s)(\by)\\
    \ge& \left(1-\frac{1}{e}\right)\left(\max_{\bx\in\cP}(\bar g_{\cR}+s)(\bx) - \varepsilon\right)\\
    \ge& \left(1-\frac{1}{e}\right)\max_{\bx\in\cP}(\hat g_{\cR}+s)(\bx) - \varepsilon.
    \end{align*}
\end{proof}

\subsection{Proof of Lemma \ref{lem:independentGradient}}

The following lemma is a more detailed version of Lemma~\ref{lem:independentGradient}.
\begin{lemma}[Detailed version of Lemma~\ref{lem:independentGradient}]\label{lem-lipschitz-independent}
	Suppose that functions $q_{v,j}(x_j)$'s are $L_q$-Lipschitz, then 
function $g(\bx)$ is $n^2\sqrt{d}L_q$-Lipschitz, and 
functions $\hat g_{\cR}(\bx)$ and $\bar g_{\cR}(\bx)$ are
$\nu^{(1)}(\cR) n \sqrt{d}L_q$-Lipschitz.
The subgradient of the function $\bar g_{\cR}(\bx)$ is
    \begin{align*}
        \partial \bar g_{\cR}(\bx) &= \frac{n}{\theta}\sum_{R\in \cR}\partial \min\left\{1,\sum_{j\in [d], v\in                                        R}q_{v,j}(x_j)\right\}\\
                         &=\frac{n}{\theta}\sum_{R\in \cR}\left\{
                            \begin{aligned}
                            0 & \text{, if }\sum_{j\in [d],v\in R} q_{v,j}(x_j)\ge 1 \\
                            \sum_{v\in R, j\in [d]}\nabla q_{v,j}(x_j) & \text{, if }\sum_{j\in [d],v\in R} q_{v,j}(x_j)<1
                            \end{aligned}
                            \right.\\
\end{align*}
    and can be computed in time $O(T_q)$ if we assume that the gradient and function value of $q_{v,j}(\bx)$ can be generated in time $O(\sum_{R\in\cR}|R|(1+T_q))$.
\end{lemma}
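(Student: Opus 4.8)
The plan is to reduce the two Lipschitz claims about $g$ and $\hat{g}_{\cR}$ to the general-case bound already proved in Lemma~\ref{lem:generalGradient} (detailed version Lemma~\ref{lem-lipschitz-smoothness-general}), by first showing that in the independent strategy activation case each $h_v$ is $\sqrt{d}\,L_q$-Lipschitz. First I would differentiate $h_v(\bx) = 1 - \prod_{j\in[d]}(1-q_{v,j}(x_j))$ coordinate-wise: for each $i$, $\partial h_v / \partial x_i = q_{v,i}'(x_i)\prod_{j\neq i}(1-q_{v,j}(x_j))$. Since $q_{v,i}$ is $L_q$-Lipschitz, $|q_{v,i}'(x_i)|\le L_q$, and since every $q_{v,j}(x_j)\in[0,1]$ the remaining product lies in $[0,1]$; hence each partial derivative is at most $L_q$ in magnitude and $\|\nabla h_v(\bx)\|_2\le\sqrt{d}\,L_q$. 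Substituting $L_h=\sqrt{d}\,L_q$ into Lemma~\ref{lem-lipschitz-smoothness-general} then gives directly that $g$ is $n^2\sqrt{d}\,L_q$-Lipschitz and that $\hat{g}_{\cR}$ is $\nu^{(1)}(\cR)\,n\sqrt{d}\,L_q$-Lipschitz.

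For $\bar g_{\cR}$, which is nondifferentiable because of the $\min\{1,\cdot\}$ terms, I would instead work with subgradients, recalling from the proof of Lemma~\ref{lem-concave-upperbound} that $\bar g_{\cR}$ is concave. Applying the subdifferential sum rule together with the rule for a pointwise minimum of concave functions, a valid subgradient of $\min\{1,\sum_{j\in[d],v\in R}q_{v,j}(x_j)\}$ is $\vzero$ when the inner sum is $\ge 1$ (the constant piece $1$, of gradient $\vzero$, is then active) and $\sum_{v\in R,j\in[d]}\nabla q_{v,j}(x_j)$ when the sum is $<1$ (the concave piece is active); here $\nabla q_{v,j}(x_j)$ is the vector $q_{v,j}'(x_j)\be_j\in\R^d$. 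Scaling by $n/\theta$ and summing over $\cR$ reproduces exactly the claimed formula for $\partial\bar g_{\cR}(\bx)$.

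To recover the Lipschitz constant of $\bar g_{\cR}$ I would bound the norm of every subgradient and invoke the standard equivalence, for a concave function on a convex set, between $L$-Lipschitzness and containment of the subdifferential in the radius-$L$ ball. For a single RR set $R$, the nonzero branch $\sum_{v\in R,j\in[d]}\nabla q_{v,j}(x_j)$ has $j$-th coordinate $\sum_{v\in R}q_{v,j}'(x_j)$, of absolute value at most $|R|L_q$, so its $2$-norm is at most $\sqrt{d}\,|R|L_q$; averaging, $\frac{n}{\theta}\sum_{R\in\cR}\sqrt{d}\,|R|L_q=\nu^{(1)}(\cR)\,n\sqrt{d}\,L_q$. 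Since the subdifferential at a kink is just the convex hull of $\vzero$ and the nonzero branch, convexity of the norm ball extends the bound to the whole subdifferential, giving that $\bar g_{\cR}$ is $\nu^{(1)}(\cR)\,n\sqrt{d}\,L_q$-Lipschitz. The running-time claim then follows by noting that for each $R$ one first evaluates $\sum_{j\in[d],v\in R}q_{v,j}(x_j)$ to select the branch and, when it is below $1$, accumulates the vectors $\nabla q_{v,j}(x_j)$, at cost $O(T_q)$ per $(v,j)$ term.

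The step I expect to be the main obstacle is the careful treatment of the subdifferential at the kinks of the $\min$: I must verify that choosing the $\vzero$ subgradient exactly on the closed set $\{\sum_{j\in[d],v\in R}q_{v,j}(x_j)\ge 1\}$ still yields a genuine element of $\partial\bar g_{\cR}(\bx)$, and that the uniform norm bound persists at those boundary points where the subdifferential is the full segment between $\vzero$ and $\sum_{v\in R,j\in[d]}\nabla q_{v,j}(x_j)$. The remaining pieces are routine, resting on the already-established general-case Lipschitz lemma.
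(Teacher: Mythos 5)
Your proof is correct, and the first half follows the paper's route: establish that each $h_v$ is $\sqrt{d}\,L_q$-Lipschitz and then substitute $L_h=\sqrt{d}\,L_q$ into the general-case Lemma~\ref{lem-lipschitz-smoothness-general} to get the constants for $g$ and $\hat g_{\cR}$. (The paper proves the $\sqrt{d}\,L_q$-Lipschitzness of $h_v$ by a telescoping-product bound $|h_v(\bx)-h_v(\by)|\le\sum_j|q_{v,j}(x_j)-q_{v,j}(y_j)|\le L_q\|\bx-\by\|_1\le L_q\sqrt{d}\|\bx-\by\|_2$ rather than by bounding $\|\nabla h_v\|_2$ coordinate-wise; your gradient argument is equivalent where the $q_{v,j}$ are differentiable, which the subgradient formula implicitly assumes anyway.) Where you genuinely diverge is the Lipschitz constant of $\bar g_{\cR}$: the paper never touches the subdifferential for this, instead using the elementary inequality $|\min\{1,a\}-\min\{1,b\}|\le|a-b|$ to reduce directly to $\frac{n}{\theta}\sum_{R}\sum_{j,v\in R}L_q|x_j-y_j|\le\nu^{(1)}(\cR)nL_q\sqrt{d}\|\bx-\by\|_2$, and then dismisses the supergradient formula and running time as immediate. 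Your route — compute the supergradient selection, bound its norm by $\sqrt{d}\,|R|L_q$ per RR set, and invoke the equivalence between Lipschitzness and a uniformly bounded superdifferential for a concave function — yields the same constant and has the side benefit of justifying the displayed subgradient formula (including the choice of $\vec{0}$ on the closed set where the inner sum reaches $1$, which is a valid element of the convex hull of the active branches) more carefully than the paper does; the cost is exactly the kink-point bookkeeping you flag, which the paper's function-value argument sidesteps entirely.
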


\begin{proof}[Proof of Lemma \ref{lem-lipschitz-independent}]
    First recall that $h_v(\bx) = 1 - \prod_{j\in [d]} (1-q_{v,j}(x_j))$. Then since $q_{v,j}(\bx)$ is $L_q$-Lipschitz, it can be easily shown that $h_v(\bx)$ is $L_q\sqrt{d}$-Lipschitz, since
    \begin{align*}
        |h_v(\bx) - h_v(\by)| =& \bigg|1 - \prod_{j\in [d]} (1-q_{v,j}(x_j)) - 1 + \prod_{j\in [d]} (1-q_{v,j}(x_j))\bigg|\\
        \le& \bigg|\prod_{j=1}^d (1-q_{v,j}(x_j)) - (1-q_{v,j}(y_1))\prod_{j=2}^d (1-q_{v,j}(x_j))\bigg|\\
        &\quad +\bigg|(1-q_{v,j}(y_1))\prod_{j=2}^d (1-q_{v,j}(y_j)) - \prod_{j=1}^2(1-q_{v,j}(y_j))\prod_{j=3}^d (1-q_{v,j}(x_j))\bigg|\\
        &\quad +\cdots +\bigg|\prod_{j=1}^{d-1}(1-q_{v,j}(y_j))\cdot (1-q_{v,j}(x_d)) - \prod_{j=1}^d(1-q_{v,j}(y_j))\bigg|\\
        \le& \sum_{j=1}^d|q_{v,j}(x_j)-q_{v,j}(y_j)|\\
        \le& L_q\cdot ||\bx-\by||_1\\
        \le& L_q\sqrt{d}||\bx-\by||_2.
        \end{align*}
        Then from the previous lemma (Lemma \ref{lem-lipschitz-smoothness-general}), we 
        can see that $g(\bx)$ is $n^2\sqrt{d}L_q$-Lipschitz and
        $\hat g_{\cR}(\bx)$ is all $\nu^{(1)}(\cR) n \sqrt{d}L_q$-Lipschitz. We also have
        \begin{align*}
            |\bar g_{\cR}(\bx) - \bar g_{\cR}(\by)| \le& \frac{n}{\theta}\sum_{R\in \cR}\bigg|\min\left\{1,\sum_{j\in [d], v\in                                        R}q_{v,j}(x_j)\right\}-\min\left\{1,\sum_{j\in [d], v\in R}q_{v,j}(y_j)\right\}\bigg| \\
            \le& \frac{n}{\theta}\sum_{R\in \cR}\bigg|\sum_{j\in [d], v\in R}q_{v,j}(x_j)-\sum_{j\in [d], v\in R}q_{v,j}(y_j)\bigg|\\
            \le& \frac{n}{\theta}\sum_{R\in \cR}\sum_{j\in [d], v\in R}|q_{v,j}(x_j)-q_{v,j}(y_j)|\\
            \le& \frac{n}{\theta}\sum_{R\in \cR}\sum_{j\in [d], v\in R} L_q|x_j-y_j| \\
            \le& \frac{n}{\theta}\sum_{R\in \cR}\sum_{v\in R}L_q||\bx-\by||_1 \\
            \le& \nu^{(1)}(\cR) n L_q\sqrt{d}||\bx-\by||_2.
        \end{align*}
        As for the subgradient of $\bar g_{\cR}(\bx)$ and the time complexity to generate the subgradient, it is trivial. Note that in the time complexity $\sum_{R\in\cR}|R|(1+T_q)$, the constant $1$ is used for basic operations.
\end{proof}

\section{Omitted Proofs for Time Complexity (Theorem \ref{thm:timesimple})} \label{app:timeComplexity}
In this section, we present our time complexity results for $\ProxGradRIS$ and $\UpperGradRIS$. We divide this section into 2 parts. In the first part, we show the time complexity of $\ProxGradRIS$ and $\UpperGradRIS$ in a general form (Theorem \ref{thm-proximal-time} and \ref{thm-concave-upperbound-time}), and then Theorem \ref{thm:timesimple} will become a corollary. However, the time complexity bound in Theorem \ref{thm-proximal-time} and \ref{thm-concave-upperbound-time} is too conservative and cannot reflect the empirical running time in experiments. In order to close this gap, we give time complexity bounds based on the moments of RR-set size in the second part. Then we will give some statistics of the moments of RR-set size in the section describing our experiments.

\subsection{Proof of Theorem \ref{thm:timesimple}}
In this subsection, we prove Theorem \ref{thm:timesimple}. 
We actually prove the full version of the running times for the two algorithms
 in Theorems~\ref{thm-proximal-time} and~\ref{thm-concave-upperbound-time}.
Our proof follows from the original proof of the time complexity of IMM algorithm. First, we have to show a lemma(Lemma \ref{lem-high-prob-lb-and-opt}), which states that in Algorithm \ref{alg-sampling}, with high probability, we output $\text{LB}$ does not differ so much from the optimal value $\text{OPT}_{g+s}$. For convenience, all the notations follow from Algorithm \ref{alg-sampling}. We use $\text{OPT}_{g+s} = (g+s)(\bx_{g+s}^*)$ to denote the maximum value of $(g+s)$ in set $\cP$.

\begin{lemma}\label{lem-high-prob-lb-and-opt}
    For every $i = 1,2,\dots,\lfloor\log_2(n+\lambda k)\rfloor-1$, if $\text{OPT}_{g+s} \ge (1+\varepsilon/3+\varepsilon')^2 \cdot x_i / (\alpha - \varepsilon/3)$, then with probability at least $1-\frac{1}{2n^{\ell}\cdot \cN(\cP,\frac{\varepsilon/3}{L_2} x_i)\cdot \log_2{(n+\lambda k)}}$, $(\hat g_{\cR_i}+s)(\by_i) \ge (\alpha-\varepsilon/3)\text{OPT}_{g+s} / (1+\varepsilon/3+\varepsilon')$ and $(\hat g_{\cR_i}+s)(\by_i) \ge (1+\varepsilon/3+\varepsilon')x_i$.
\end{lemma}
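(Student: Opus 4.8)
The plan is to reduce the two claimed inequalities to a single high-probability concentration event at the fixed maximizer $\bx^*_{g+s}$, and then settle everything else deterministically. Write $\rho := 1+\varepsilon/3+\varepsilon'$ for brevity and abbreviate $A := \ln\cN(\cP,\frac{\varepsilon/3}{L_2}x_i)+\ell\ln n+\ln 2+\ln\log_2(n+\lambda k)$, so that the target failure probability is exactly $e^{-A}$ and the sample count satisfies $\theta_i\ge \frac{n(2+\frac23\varepsilon')A}{\varepsilon'^2 x_i}$. I would introduce the good event
\[
E := \Big\{(\hat g_{\cR_i}+s)(\bx^*_{g+s})\ge \text{OPT}_{g+s}/\rho\Big\}
\]
and first show that, conditioned on $E$, both conclusions hold. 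Since $\cA$ is invoked as $\cA(\cR_i,\hat g_{\cR_i}+s,\varepsilon x_i/3)$, its approximation guarantee gives, on $E$, $(\hat g_{\cR_i}+s)(\by_i)\ge \alpha\max_{\by\in\cP}(\hat g_{\cR_i}+s)(\by)-\varepsilon x_i/3\ge \alpha\,\text{OPT}_{g+s}/\rho-\varepsilon x_i/3$. The hypothesis $\text{OPT}_{g+s}\ge \rho^2 x_i/(\alpha-\varepsilon/3)$ implies, using $\alpha-\varepsilon/3\le 1$ and $\rho\ge 1$, that $x_i\le \text{OPT}_{g+s}/\rho$, so the additive loss $\varepsilon x_i/3$ is at most $(\varepsilon/3)\text{OPT}_{g+s}/\rho$; this yields the first inequality $(\hat g_{\cR_i}+s)(\by_i)\ge(\alpha-\varepsilon/3)\text{OPT}_{g+s}/\rho$. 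The second inequality then follows by one more substitution of the hypothesis, since $(\alpha-\varepsilon/3)\text{OPT}_{g+s}/\rho\ge (\alpha-\varepsilon/3)\rho^2 x_i/((\alpha-\varepsilon/3)\rho)=\rho x_i$.

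The core step is then to prove $\Pr\{E^c\}\le e^{-A}$, which is a one-point concentration bound and hence needs no $\varepsilon$-net union bound. Setting $X_j=1-\prod_{v\in R_j}(1-h_v(\bx^*_{g+s}))\in[0,1]$, we have $\hat g_{\cR_i}(\bx^*_{g+s})=\frac{n}{\theta_i}\sum_{j}X_j$ with $\E[\frac{n}{\theta_i}\sum_j X_j]=g(\bx^*_{g+s})$ by the RR-set identity for $g$. Writing $g^\ast:=g(\bx^*_{g+s})$ and $s^\ast:=s(\bx^*_{g+s})$, the event $E^c$ is exactly $\sum_j X_j<\frac{\theta_i}{n}(\text{OPT}_{g+s}/\rho-s^\ast)$. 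If this threshold is non-positive the probability is zero, so I may assume it is positive; then it equals $(1-\gamma)\,\E[\sum_j X_j]$ with $\gamma=\frac{\text{OPT}_{g+s}(\rho-1)/\rho}{g^\ast}\in(0,1)$, and the lower-tail bound of Proposition~\ref{prop-chernoff} applies:
\[
\Pr\{E^c\}\le \exp\!\Big(-\tfrac{\gamma^2}{2}\cdot\tfrac{\theta_i}{n}g^\ast\Big)=\exp\!\Big(-\tfrac{\theta_i\,(\text{OPT}_{g+s}(\rho-1)/\rho)^2}{2n\,g^\ast}\Big).
\]

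To finish I would lower-bound the exponent by $A$. Substituting $\theta_i\ge\frac{n(2+\frac23\varepsilon')A}{\varepsilon'^2 x_i}$ and using $g^\ast\le\text{OPT}_{g+s}$ makes the exponent at least $\frac{(2+\frac23\varepsilon')A\,\text{OPT}_{g+s}((\rho-1)/\rho)^2}{2\varepsilon'^2 x_i}$; the hypothesis $\text{OPT}_{g+s}/x_i\ge \rho^2/(\alpha-\varepsilon/3)\ge\rho^2$ cancels the $1/\rho^2$, and since $\rho-1=\varepsilon/3+\varepsilon'\ge\varepsilon'$ the exponent is at least $\frac{(2+\frac23\varepsilon')A}{2}\ge A$, giving $\Pr\{E^c\}\le e^{-A}=\frac{1}{2n^{\ell}\cN(\cP,\frac{\varepsilon/3}{L_2}x_i)\log_2(n+\lambda k)}$. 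The two places I expect to need care are (i) the degenerate regime where the concentration threshold is non-positive, which is precisely the case in which the deterministic budget-saving term $s^\ast$ already exceeds $\text{OPT}_{g+s}/\rho$ and the claim holds with certainty, so it must be excluded from the Chernoff step rather than bounded by it; and (ii) carrying the deterministic shift $s^\ast$ correctly through the tail bound, verifying that the deviation ratio $\gamma$ stays below $1$ (automatic exactly when the threshold is positive). This additive $s^\ast$ shift is the main structural departure from the standard IMM sampling analysis, which has no budget-saving term.
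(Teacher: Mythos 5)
Your proposal is correct and follows essentially the same route as the paper's proof: reduce both claimed inequalities to the single one-point lower-tail Chernoff event at $\bx^*_{g+s}$, transfer to $\by_i$ via the oracle's approximation guarantee, and obtain the second inequality deterministically from the first together with the hypothesis on $\text{OPT}_{g+s}$. You are in fact somewhat more careful than the paper at two points — converting the oracle's additive error $\varepsilon x_i/3$ into the multiplicative $(\alpha-\varepsilon/3)$ factor, and excluding the degenerate non-positive-threshold case so that the Chernoff deviation ratio $\gamma$ is genuinely in $(0,1)$ — but these are refinements of the same argument, not a different one.
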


\begin{proof}
    First we know that $(\hat g_{\cR_i}+s)(\by_i) \ge (\alpha - \varepsilon/3)\max_{\by\in\cP}(\hat g_{\cR_i}+s)(\by)$. For any $\cR$, let $X_i^{\cR}(\bx) = 1-\prod_{v\in R_i}(1-h_v(\bx))$ where $\cR = \{R_1,R_2,\dots,R_{\theta}\}$, then $X_i^{\cR}(\bx) \in [0,1]$ and $\hat g_{\cR}(\bx) = \frac{n}{\theta}\sum_{i=1}^{\theta}X_i^{\cR}(\bx)$. We also know that $X_i^{\cR}(\bx)$ are independent. By Chernoff Bound(Proposition \ref{prop-chernoff}), we have
    \begin{align*}
        &\Pr\{(\hat g_{\cR_i}+s)(\by_i) \le (\alpha-\varepsilon/3)\text{OPT}_{g+s} / (1+\varepsilon/3+\varepsilon')\}\\
        \le&\Pr\{(\alpha - \varepsilon/3)\max_{\by\in\cP}(\hat g_{\cR_i}+s)(\by) \le (\alpha-\varepsilon/3)\text{OPT}_{g+s} / (1+\varepsilon/3+\varepsilon')\}\\
        =& \Pr\{\max_{\by\in\cP}(\hat g_{\cR_i}+s)(\by) \le \text{OPT}_{g+s} / (1+\varepsilon/3+\varepsilon')\}\\
        \le& \Pr\{(\hat g_{\cR_i}+s)(\bx_{g+s}^*) \le \text{OPT}_{g+s} / (1+\varepsilon/3+\varepsilon')\}\\
        =& \Pr\{\hat g_{\cR_i}(\bx_{g+s}^*) \le \text{OPT}_{g+s} / (1+\varepsilon/3+\varepsilon')-s(\bx_{g+s}^*)\}\\
        =& \Pr\left\{\sum_{j=1}^{\theta_i}X_j^{\cR_i}(\bx_{g+s}^*)\frac{n}{\theta_i} \le \text{OPT}_{g+s} / (1+\varepsilon/3+\varepsilon')-s(\bx_{g+s}^*)\right\}\\
        =& \Pr\left\{\sum_{j=1}^{\theta_i}X_j^{\cR_i}(\bx_{g+s}^*) \le \frac{\theta_i}{n}\text{OPT}_{g+s} / (1+\varepsilon/3+\varepsilon')-\frac{\theta_i}{n}s(\bx_{g+s}^*)\right\}\\
        =& \Pr\left\{\sum_{j=1}^{\theta_i}X_j^{\cR_i}(\bx_{g+s}^*)-\frac{\theta_i}{n}g(\bx_{g+s}^*) \le \frac{\theta_i}{n}\text{OPT}_{g+s} / (1+\varepsilon/3+\varepsilon')-\frac{\theta_i}{n}g(\bx_{g+s}^*)-\frac{\theta_i}{n}s(\bx_{g+s}^*)\right\}\\
        \le& \exp\left(-\frac{\varepsilon'^2}{2(1+\varepsilon/3+\varepsilon')^2}\frac{\theta_i}{n}\text{OPT}_{g+s}\right)\\
        \le& \exp\left(- \frac{(1+\varepsilon/3+\varepsilon')^2 \cdot x_i\varepsilon'^2n\cdot\left(2+\frac{2}{3}\varepsilon'\right)\cdot\left(\ln\cN(\cP,\frac{\varepsilon/3}{L_2} x_i)+\ell \ln n+\ln 2 + \ln\log_2{(n+\lambda k)}\right)}{2(\alpha - \varepsilon/3)(1+\varepsilon/3+\varepsilon')^2n\varepsilon'^2 x_i}\right)\\
        \le& \frac{1}{2n^{\ell}\cdot \cN(\cP,\frac{\varepsilon/3}{L_2} x_i)\cdot \log_2{(n+\lambda k)}}.
    \end{align*}
    Combined with the assumption that $\text{OPT}_{g+s} \ge (1+\varepsilon/3+\varepsilon')^2 \cdot x_i / (\alpha - \varepsilon/3)$, we have: if $(\hat g_{\cR_i}+s)(\by_i) \ge (\alpha-\varepsilon/3)\text{OPT}_{g+s} / (1+\varepsilon/3+\varepsilon')$, then
    \[(\hat g_{\cR_i}+s)(\by_i) \ge (\alpha-\varepsilon/3)\text{OPT}_{g+s} / (1+\varepsilon/3+\varepsilon') \ge \frac{\alpha-\varepsilon/3}{1+\varepsilon/3+\varepsilon'}(1+\varepsilon/3+\varepsilon')^2 \cdot x_i / (\alpha - \varepsilon/3) = (1+\varepsilon/3+\varepsilon')x_i.\]
\end{proof}

Then, with the previous high probability lemma, we can upper bound $\E[\theta^{(1)}+\theta^{(2)}+\theta_{i_{ret}}]$, where $i_{ret}$ denote the index of Algorithm \ref{alg-sampling} that break from the for-loop. We use $L_1,L_2$ to denote the Lipschitz constant of the function $(g+s)$ and the function $(\hat g_{\cR_i}+s)$.

\begin{lemma}\label{lem-expectation-upperbound}
    Let $i_{ret}$ denote the index of Algorithm \ref{alg-sampling} that break from the for-loop. If we have $n+\lambda k = O(n^{\ell})$, then
    \[\E[\theta^{(1)}+\theta^{(2)}+\theta_{i_{ret}}] = O\left(\frac{n\cdot\ln\left(n^{\ell}\cN(\cP,\frac{\varepsilon}{L_1+L_2}\text{LB})\right)}{\varepsilon^2\text{OPT}_{g+s}}\right).\]
\end{lemma}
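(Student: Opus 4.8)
The plan is to follow the running-time analysis of IMM~\cite{tang15}, adapted to our infinite solution space through the covering number. The whole argument hinges on pinning down the break index $i_{ret}$ of Algorithm~\ref{alg-sampling} and the resulting lower bound $\LB$ relative to $\text{OPT}_{g+s}$. First I would introduce a threshold index $i^\ast$, defined as the smallest $i$ with $x_i \le (\alpha-\varepsilon/3)\,\text{OPT}_{g+s}/(1+\varepsilon/3+\varepsilon')^2$. Since consecutive guesses satisfy $x_{i^\ast}=x_{i^\ast-1}/2$ while $x_{i^\ast-1}$ still exceeds the threshold, this forces $x_{i^\ast}=\Theta(\text{OPT}_{g+s})$. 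The point of $i^\ast$ is that for every $i\ge i^\ast$ the hypothesis of Lemma~\ref{lem-high-prob-lb-and-opt} is satisfied, so each such iteration triggers the break condition $(\hat g_{\cR_i}+s)(\by_i)\ge(1+\varepsilon'+\varepsilon/3)x_i$ except with probability at most $\frac{1}{2n^\ell\cN(\cP,\frac{\varepsilon/3}{L_2}x_i)\log_2(n+\lambda k)}$.

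Next I would define the good event $\mathcal G=\{i_{ret}\le i^\ast\}$ and argue $\Pr\{\mathcal G\}\ge 1-\frac{1}{2n^\ell}$. Indeed, $\{i_{ret}>i^\ast\}$ is contained in the event that the break fails at iteration $i^\ast$ itself, whose probability is at most $\frac{1}{2n^\ell\cN(\cdots)\log_2(n+\lambda k)}\le\frac{1}{2n^\ell}$ by Lemma~\ref{lem-high-prob-lb-and-opt}. On $\mathcal G$ everything is controlled deterministically. The break condition gives $\LB=(\hat g_{\cR_{i_{ret}}}+s)(\by_{i_{ret}})/(1+\varepsilon'+\varepsilon/3)\ge x_{i_{ret}}$, and since $x_i$ decreases in $i$ and $i_{ret}\le i^\ast$, we get $\LB\ge x_{i_{ret}}\ge x_{i^\ast}=\Omega(\text{OPT}_{g+s})$. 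Moreover $\theta_i$ is increasing in $i$ (both $1/x_i$ and $\ln\cN(\cP,\frac{\varepsilon/3}{L_2}x_i)$ grow as $i$ grows), so $\theta_{i_{ret}}\le\theta_{i^\ast}=O\big(\tfrac{n\ln(n^\ell\cN(\cP,\frac{\varepsilon/3}{L_2}x_{i^\ast}))}{\varepsilon'^2 x_{i^\ast}}\big)=O\big(\tfrac{n\ln(\cdots)}{\varepsilon^2\,\text{OPT}_{g+s}}\big)$, using $\varepsilon'=\Theta(\varepsilon)$ and $x_{i^\ast}=\Theta(\text{OPT}_{g+s})$.

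For $\theta^{(1)}$ and $\theta^{(2)}$ I would exploit that both scale as $1/\LB$, and that the covering-number factor in $\theta^{(2)}$ only shrinks as $\LB$ grows (the net radius $\frac{\varepsilon/3}{L_1+L_2}\LB$ increases), by monotonicity of $\cN(\cP,\cdot)$ in its second argument. Substituting $\LB=\Theta(\text{OPT}_{g+s})$ on $\mathcal G$, and noting the differences between $L_2$ and $L_1+L_2$ and between the radii $\frac{\varepsilon/3}{L_2}x_{i^\ast}$ and $\frac{\varepsilon}{L_1+L_2}\LB$ are only constant factors inside the log, gives each of $\theta^{(1)},\theta^{(2)}$ the claimed bound $O\big(\tfrac{n\ln(n^\ell\cN(\cP,\frac{\varepsilon}{L_1+L_2}\LB))}{\varepsilon^2\,\text{OPT}_{g+s}}\big)$ on $\mathcal G$.

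The remaining work, and the main obstacle, is the expectation over the rare failure event $\mathcal G^c$, where the loop breaks late: there $\LB$ can be as small as its initialized value $1$, so $\theta^{(1)}$ and $\theta^{(2)}$ blow up, and $\theta_{i_{ret}}$ can be as large as $\theta_{\lfloor\log_2(n+\lambda k)\rfloor-1}=O(n\,\mathrm{polylog}(n)/\varepsilon^2)$. The key observation that saves the bound is that $\Pr\{\mathcal G^c\}\le\frac{1}{2n^\ell}$ is small enough to kill these worst-case magnitudes: using the standing assumption $\text{OPT}_{g+s}\le n+\lambda k=O(n^\ell)$, the product $\Pr\{\mathcal G^c\}\cdot\theta_{\max}$ and the $\mathcal G^c$-contributions to $\E[\theta^{(1)}+\theta^{(2)}]$ are each $O\big(\tfrac{n\ln(\cdots)}{\varepsilon^2\,\text{OPT}_{g+s}}\big)$ and hence absorbed. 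Summing the three expectations over $\mathcal G$ and $\mathcal G^c$ then yields the stated bound on $\E[\theta^{(1)}+\theta^{(2)}+\theta_{i_{ret}}]$.
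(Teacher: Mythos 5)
Your proposal is correct and follows essentially the same route as the paper's proof: both identify the threshold index at which $x_i$ first drops below $\Theta(\text{OPT}_{g+s})$, invoke Lemma~\ref{lem-high-prob-lb-and-opt} to show the loop breaks by that index except with probability $O(1/n^{\ell})$, and then split the expectation over the good and bad events, using $n+\lambda k = O(n^{\ell})$ to absorb the worst-case $\theta$ values on the failure event. The only cosmetic difference is that the paper packages the case split as a bound on $\E[1/x_{ret}]$ (and uses $\LB \ge x_{ret}$) before substituting into the three $\theta$ expressions, whereas you split $\E[\theta^{(1)}+\theta^{(2)}+\theta_{i_{ret}}]$ directly.
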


\begin{proof}
    Let $x_{ret}$ denote the value $x_{i_{ret}}$ when Algorithm \ref{alg-sampling} break from the for-loop. We first prove that,
    \[\E\left[\frac{1}{x_{ret}}\right] = O\left(\frac{1+\frac{n+\lambda k}{n^{\ell}}}{\text{OPT}_{g+s}}\right).\]
    Let $i$ denote the smallest index such that $\text{OPT}_{g+s} \ge (1+\varepsilon/3+\varepsilon')^2 \cdot x_i / (\alpha - \varepsilon)$. From the previous lemma(Lemma \ref{lem-high-prob-lb-and-opt}), we know that with probability at least $1-\frac{1}{2n^{\ell}\cdot \cN(\cP,\frac{\varepsilon}{L_2} x_i)\cdot \log_2{(n+\lambda k)}}$, we have 
    \[(\hat g_{\cR_i}+s)(\by_i) \ge (1+\varepsilon/3+\varepsilon')x_i,\]
    so the algorithm will break from for-loop and $x_{ret}\ge x_i$. Let $\cA$ denote the event that $(\hat g_{\cR_i}+s)(\by_i) \ge (1+\varepsilon/3+\varepsilon')x_i$. Let $X=\frac{1}{x_{ret}}$ and we have
    \[\E[X] = \E[X|\cA]\Pr[\cA]+\E[X|\lnot \cA]\Pr[\lnot\cA].\]
    We know that $\E[X|\cA] = O\left(\frac{1}{\text{OPT}_{g+s}}\right)$ and $\Pr[\cA]\le 1$. We also know that $\E[X|\lnot \cA] \le 1$ since $x_j \ge 1$. Then 
    \[\Pr[\lnot\cA] \le \frac{1}{2n^{\ell}\cdot \cN(\cP,\frac{\varepsilon/3}{L_2} x_i)\cdot \log_2{(n+\lambda k)}} = O\left(\frac{n+\lambda k}{n^{\ell} \text{OPT}_{g+s}}\right),\]
    where we use the fact that $g(\bx)\le n$ and $s(\bx) = \lambda(k-c(\bx))\le \lambda k$. Then we have
    \[\E\left[\frac{1}{x_{ret}}\right] = O\left(\frac{1+\frac{n+\lambda k}{n^{\ell}}}{\text{OPT}_{g+s}}\right) = O\left(\frac{1}{OPT_{g+s}}\right).\]
    Recall that
    \[\theta_i = \left\lceil\frac{n\cdot\left(2+\frac{2}{3}\varepsilon'\right)\cdot\left(\ln\cN(\cP,\frac{\varepsilon/3}{L_2} x_i)+\ell \ln n+\ln 2 + \ln\log_2{(n+\lambda k)}\right)}{\varepsilon'^2 x_i}\right\rceil,\]
    and
    \[\theta^{(1)} = \frac{8n\cdot \ln\left(4n^{\ell}\right)}{\text{LB}\cdot (\alpha-\varepsilon/3)^2\varepsilon^2/9}, \quad \theta^{(2)} = \frac{2\alpha'\cdot n\cdot\ln\left(4n^{\ell}\cN(\cP,\frac{\varepsilon/3}{L_1+L_2}\text{LB})\right)}{(\varepsilon/3 - \frac{1}{4}(\alpha-\varepsilon/3)^2 \varepsilon/3)^2\text{LB}}.\]
    We know that $\text{LB} \ge x_{ret}$, so we have
    \[\E[\theta^{(1)}+\theta^{(2)}+\theta_{i_{ret}}] = O\left(\frac{n\cdot\ln\left(n^{\ell}\cN(\cP,\frac{\varepsilon/3}{L_1+L_2}\text{LB})\right)}{\varepsilon^2/9\text{OPT}_{g+s}}\right) = O\left(\frac{n\cdot\ln\left(n^{\ell}\cN(\cP,\frac{\varepsilon}{L_1+L_2}\text{LB})\right)}{\varepsilon^2\text{OPT}_{g+s}}\right).\]
\end{proof}

The above lemma is the main lemma for the time complexity of Algorithm \ref{alg-meta}. Next, we show some existing propositions and lemmas, which will help to prove the time complexity. The following lemmas and assumptions comes from \cite{tang15}, and we use the lemmas and assumptions to prove Theorem \ref{thm-proximal-time} and \ref{thm-concave-upperbound-time}.

\begin{definition}[Martingale]
    A series of random variables $X_1,X_2,\dots$ is a martingale, if for all $i\ge 1$, $\E[|X_i|] < +\infty$ and $\E[X_{i+1}|X_1,\dots,X_i] = X_i$.
\end{definition}

\begin{lemma}[Sufficient Condition for Martingale]\label{lem-sufficent-martingle}
    Suppose $X_1,X_2,\dots, X_t$ are $t$ random variables on $[0,1]$, which satisfy that there exists a constant $\mu$, $\E[X_{i}|X_1,\dots,X_{i-1}] = \mu$ for all $i\in [t]$. Let $Z_i = \sum_{j=1}^i(X_j-\mu)$, then $Z_1,Z_2,\dots,Z_t$ is a martingale.
\end{lemma}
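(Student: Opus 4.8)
The plan is to verify directly the two defining properties of a martingale given in the preceding definition, applied to the sequence $Z_1,\dots,Z_t$: finite first moment and the conditional-mean identity $\E[Z_{i+1}\mid Z_1,\dots,Z_i] = Z_i$. Both follow quickly from the boundedness of the $X_j$ and the hypothesis $\E[X_i\mid X_1,\dots,X_{i-1}]=\mu$.

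First I would establish integrability. Since each $X_j$ takes values in $[0,1]$, taking expectations of the hypothesis shows $\mu=\E[X_j]\in[0,1]$, so $|X_j-\mu|\le 1$ for every $j$. Hence by the triangle inequality $|Z_i| = \bigl|\sum_{j=1}^i (X_j-\mu)\bigr| \le \sum_{j=1}^i |X_j-\mu| \le i \le t$, and therefore $\E[|Z_i|]\le t < +\infty$ for all $i\in[t]$, giving the required finite-moment condition.

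Next I would verify the conditional-mean identity. The key structural observation is that, because $\mu$ is a fixed constant, the information contained in $Z_1,\dots,Z_i$ is exactly the information contained in $X_1,\dots,X_i$: one recovers $X_1 = Z_1+\mu$ and $X_{j} = Z_j - Z_{j-1} + \mu$ for $2\le j\le i$, and conversely each $Z_j$ is a function of $X_1,\dots,X_j$. Thus the two generated $\sigma$-algebras coincide, $\sigma(Z_1,\dots,Z_i)=\sigma(X_1,\dots,X_i)$, and conditioning on one is the same as conditioning on the other. Writing $Z_{i+1}=Z_i+(X_{i+1}-\mu)$ and using that $Z_i$ is measurable with respect to this $\sigma$-algebra, I would compute
\[
\E[Z_{i+1}\mid Z_1,\dots,Z_i] = Z_i + \E[X_{i+1}-\mu \mid X_1,\dots,X_i] = Z_i + (\mu-\mu) = Z_i,
\]
where the middle equality invokes the hypothesis $\E[X_{i+1}\mid X_1,\dots,X_i]=\mu$. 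This is exactly the martingale property, completing the argument.

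I do not expect any substantive obstacle here, as the statement is a routine ``centered partial sums form a martingale'' fact; the only point requiring a line of care is the $\sigma$-algebra equivalence $\sigma(Z_1,\dots,Z_i)=\sigma(X_1,\dots,X_i)$, which legitimizes replacing conditioning on the $Z_j$'s by conditioning on the $X_j$'s so that the given martingale-difference hypothesis can be applied.
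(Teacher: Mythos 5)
Your proof is correct. Note that the paper itself gives no proof of this lemma --- it is imported verbatim from the IMM paper of Tang et al.\ as a standard fact --- so there is nothing to compare against; your argument is the routine one and is sound. Both steps check out: integrability follows since $\mu=\E[X_j]\in[0,1]$ forces $|X_j-\mu|\le 1$ and hence $\E[|Z_i|]\le i<\infty$, and the conditional-mean identity follows once you observe (as you correctly do, and this is the only point needing care given that the paper's martingale definition conditions on the sequence's own past) that $\sigma(Z_1,\dots,Z_i)=\sigma(X_1,\dots,X_i)$, so the hypothesis $\E[X_{i+1}\mid X_1,\dots,X_i]=\mu$ applies directly to give $\E[Z_{i+1}\mid Z_1,\dots,Z_i]=Z_i$.
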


\begin{proposition}[Stopping Time of Martingale]\label{prop-stopping-time}
    Let random variable $\tau$ denote the stopping time of a martingale $\{X_i\}_{i\ge 1}$. If there is a constant $c$ that is independent to $\{X_i\}_{i\ge 1}$ and $\tau \le c$, then $\E[X_{\tau}] = \E[X_1]$.
\end{proposition}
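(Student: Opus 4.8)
The plan is to prove this bounded version of the optional stopping theorem by a telescoping decomposition of $X_\tau$ combined with the defining martingale identity. Since the hypothesis supplies a deterministic bound $\tau \le c$, I would first assume without loss of generality that $c$ is a positive integer (otherwise replace $c$ by $\lceil c\rceil$) and that $\tau \ge 1$, so that $X_\tau$ is well defined. Write $\cF_i = \sigma(X_1,\dots,X_i)$ for the natural filtration; the martingale hypothesis of the excerpt then reads $\E[X_{i+1}\mid \cF_i] = X_i$, i.e. $\E[X_{i+1}-X_i\mid\cF_i]=0$, and $\tau$ being a stopping time means $\{\tau \le i\}\in\cF_i$ for every $i$.

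The first step is to express $X_\tau$ as a finite telescoping sum, truncated at $c$:
\[
X_\tau = X_1 + \sum_{i=1}^{c-1}(X_{i+1}-X_i)\,\I[\tau > i].
\]
This holds pointwise: on the event $\{\tau = t\}$ the indicator $\I[\tau>i]$ equals $1$ precisely for $i=1,\dots,t-1$, so the right-hand side collapses to $X_1+(X_t-X_1)=X_t$. The crucial structural observation, which I would verify explicitly, is that $\{\tau>i\}=\{\tau\le i\}^{c}\in\cF_i$, so each indicator $\I[\tau>i]$ is $\cF_i$-measurable.

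Next I would take expectations term by term; the sum is finite and each summand is integrable since $\E|X_i|<\infty$ for all $i\le c$, so no interchange of limit and expectation is in question. For a fixed $i$, conditioning on $\cF_i$ and pulling the $\cF_i$-measurable factor $\I[\tau>i]$ outside the inner conditional expectation yields
\[
\E\bigl[(X_{i+1}-X_i)\,\I[\tau>i]\bigr]
= \E\bigl[\I[\tau>i]\,\E[X_{i+1}-X_i\mid\cF_i]\bigr] = 0,
\]
by the martingale property. Summing these vanishing terms over $i$ leaves $\E[X_\tau]=\E[X_1]$, which is the claim.

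The argument is entirely standard, so I do not anticipate a serious obstacle; the only point that genuinely requires care is the measurability bookkeeping, namely confirming that $\I[\tau>i]$ is $\cF_i$-measurable so that it may legitimately be factored out of the conditional expectation. The deterministic bound $\tau\le c$ is exactly what renders the decomposition a finite sum and makes every interchange of expectation and summation trivially valid, thereby sidestepping the integrability and uniform-integrability subtleties that would arise for an unbounded stopping time.
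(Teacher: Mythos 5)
Your proof is correct. Note, however, that the paper does not actually prove this proposition at all: it is imported verbatim as a known result from the IMM paper \cite{tang15} (the surrounding text reads ``The following lemmas and assumptions comes from \cite{tang15}''), so there is no in-paper argument to compare against. What you have written is the standard proof of the optional stopping theorem for a deterministically bounded stopping time: the telescoping identity $X_\tau = X_1 + \sum_{i=1}^{c-1}(X_{i+1}-X_i)\,\I[\tau>i]$ is verified pointwise on each event $\{\tau=t\}$, the $\cF_i$-measurability of $\I[\tau>i]=\I[\{\tau\le i\}^{c}]$ lets you pull the indicator out of the conditional expectation, and the martingale property kills each summand. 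The finiteness of the sum (at most $c-1$ terms, each integrable since $\E|X_i|<\infty$) makes all the expectation interchanges trivial, exactly as you say. The only cosmetic remark is that the paper's definition of a martingale conditions on $X_1,\dots,X_i$, which is precisely the natural filtration you chose, so your setup matches the paper's conventions; and rounding $c$ up to an integer is harmless since $\tau$ is integer-valued. Your writeup is a complete and self-contained justification of a fact the paper leaves as a citation.
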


Besides, we have the following assumptions.

\begin{restatable}{assumption}{assumptriggeringset}\label{assump-triggering-set}
	In the triggering model, the time complexity to sample a triggering set $T_v$ for any $v$ is $O(|N^{-}(v)|)$, where $N^{-}(v)$ is the set of in-neighbors of node $v$.
\end{restatable}

\begin{restatable}{assumption}{assumpoptimalvalue}\label{assump-optimal-value}
	We assume that $\max_{\bx\in\cP} g(\bx) \ge \max_{v\in V}\sigma(v)$,
	where $\sigma(v)$ denote the influence spread of node $v$. Besides, we have $\text{OPT}_{g+s} \ge \max_{v\in V}\sigma(v)$.
\end{restatable}

Given a set $R\subseteq V$, let $\omega(R)$ denote the sum of in-degree of nodes in $R$. Based on Assumption \ref{assump-triggering-set}, we know that generating a RR-set needs time $O(\omega(R)+1)$. Next, we use $\text{EPT} = \E[\omega(R)]$ to denote the expectation of $\omega(R)$, and we have the following lemma, which also comes from \cite{tang15}.

\begin{lemma}\label{lem-ept}
    \[\text{EPT} = \E[\omega(R)] = \frac{m}{n}\cdot \E_{\tilde v}[\sigma(\tilde v)],\]
    where $\sigma(\tilde v)$ denote the influence spread of node $\tilde v$.
\end{lemma}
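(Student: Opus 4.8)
The plan is to expand $\omega(R)$ as a sum of per-node indicator contributions, interchange summation and expectation, and then recognize each per-node probability as a rescaled single-node influence spread via the reverse-reachability duality. Writing the in-degree of $u$ as $|N^-(u)|$, the definition of $\omega$ gives
\[\omega(R) = \sum_{u \in R}|N^-(u)| = \sum_{u \in V}|N^-(u)|\cdot \I\{u \in R\}.\]
Taking expectation over the randomness defining $R$ — the uniform root $v$ together with the live-edge graph induced by the sampled triggering sets $\{T_w\}_{w\in V}$ — and using linearity of expectation,
\[\E[\omega(R)] = \sum_{u \in V}|N^-(u)|\cdot \Pr[u \in R].\]

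The crux is evaluating $\Pr[u\in R]$. By the definition of the RR set, conditioned on a fixed live-edge graph we have $u \in R_v$ exactly when $u$ can reach the root $v$ under reverse propagation, equivalently when $v$ is reachable from $u$ in that live-edge graph. Since the root is uniform on $V$ and independent of the live-edge graph, averaging over $v$ gives $\Pr[u\in R] = \tfrac{1}{n}\sum_{w\in V}\Pr_g[u \text{ reaches } w]$. By the classical equivalence between the triggering model and live-edge-graph reachability~\cite{kempe03journal}, the single-seed influence spread satisfies $\sigma(\{u\}) = \sum_{w\in V}\Pr_g[u\text{ reaches }w]$, so $\Pr[u\in R] = \sigma(\{u\})/n$. (This is just the specialization of the RR-set identity $\sigma(S) = n\cdot\Pr_R[S\cap R\neq\emptyset]$ to the singleton $S=\{u\}$, already noted in Section~\ref{sec-properties-rrset}.)

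Substituting back yields $\E[\omega(R)] = \tfrac{1}{n}\sum_{u\in V}|N^-(u)|\,\sigma(\{u\})$. To match the claimed form I would take $\tilde v$ to be the node obtained by choosing a directed edge uniformly at random and keeping its head, so that $\Pr[\tilde v = u] = |N^-(u)|/m$, using $\sum_{u\in V}|N^-(u)| = m$. Then $\E_{\tilde v}[\sigma(\tilde v)] = \tfrac{1}{m}\sum_{u\in V}|N^-(u)|\,\sigma(\{u\})$, and multiplying by $m/n$ recovers exactly $\E[\omega(R)]$, proving the lemma. The computation itself is routine linearity of expectation; the only point that genuinely needs care — and the place where the statement's intended meaning must be pinned down — is the sampling distribution of $\tilde v$, which is \emph{in-degree-weighted} rather than uniform, together with the correct use of the reachability duality to convert $\Pr[u\in R]$ into $\sigma(\{u\})/n$.
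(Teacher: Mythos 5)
Your proof is correct and is essentially the standard argument from \cite{tang15}, which the paper cites for this lemma without reproducing the proof: decompose $\omega(R)=\sum_{u\in V}|N^-(u)|\,\I\{u\in R\}$, apply linearity of expectation, and use the RR-set identity $\Pr[u\in R]=\sigma(\{u\})/n$ (the singleton case of $\sigma(S)=n\cdot\Pr_R[S\cap R\neq\emptyset]$) to obtain $\E[\omega(R)]=\frac{1}{n}\sum_{u\in V}|N^-(u)|\,\sigma(\{u\})=\frac{m}{n}\E_{\tilde v}[\sigma(\tilde v)]$. You also correctly pin down the one point the paper's statement leaves implicit, namely that $\tilde v$ must be drawn with probability proportional to in-degree (equivalently, as the head of a uniformly random edge), not uniformly.
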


With the help of the previous lemma and proposition, we can prove Theorem \ref{thm-proximal-time} and Theorem \ref{thm-concave-upperbound-time}, and their corresponding corollaries.

\begin{restatable}{theorem}{thmproximaltime}\label{thm-proximal-time}
    Under Assumption \ref{assump-triggering-set} and Assumption \ref{assump-optimal-value}. Suppose that the proximal step can be finished in time $T_{prox}$,. Besides, suppose that $c(\bx)$ is $L_c$-Lipschitz and $\beta_c$-smooth, $h_v(\bx)$ are $L_h$-Lipschitz and $\beta_h$-smooth for all $v$, and the gradient of $h_v(\bx)$ and $c(\bx)$ can be generated in time $T_h$ and $T_c$. We also assume that the balance variable $\lambda$ is also a constant and $n+\lambda k\le n^l$. Then the expected running time of Algorithm \ref{alg-meta} with proximal gradient descent oracle is bounded by
    \begin{align*}
        O\left(\frac{\beta_h n^2 + L_h^2 n^3}{\varepsilon}\cdot \left((1+T_h)\frac{(m+n)\cdot\ln\left(n^{\ell}\cN(\cP,\frac{\varepsilon}{2n^2L_h+2\lambda L_c})\right)}{\varepsilon^2}+\log_2(n+\lambda k)T_c\right)\right.\\
        +\left.\frac{\log_2(n+\lambda k)(\beta_h n^2 + L_h^2 n^3 )T_{prox}}{\varepsilon}\right).
    \end{align*}
\end{restatable}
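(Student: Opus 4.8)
The plan is to decompose the total running time of Algorithm~\ref{alg-meta} into (i) the cost of generating all RR sets and (ii) the work performed inside the calls to the proximal-gradient oracle $\cA$ --- both the $\le\lfloor\log_2(n+\lambda k)-1\rfloor$ calls in the for-loop of Algorithm~\ref{alg-sampling} and the single final call in Algorithm~\ref{alg-meta} --- where each oracle call contributes gradient evaluations of $\hat g_{\cR}$, proximal steps, and cost-function evaluations. I would bound the (random) number of RR sets and the (random) total RR-set size in expectation, and then multiply by the appropriate per-unit costs.

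First I would control the number of RR sets. The total generated is $\theta_{i_{ret}}+\tilde\theta\le\theta_{i_{ret}}+\theta^{(1)}+\theta^{(2)}$, whose expectation is already bounded by Lemma~\ref{lem-expectation-upperbound}. The subtlety is that this count, call it $\tau$, is a \emph{stopping time} determined by the very samples whose in-degree sums must be added up, so $\E[\sum_{j\le\tau}\omega(R_j)]$ cannot be split naively. To resolve this I would form the martingale from the in-degree sums $\omega(R_1),\omega(R_2),\dots$ via Lemma~\ref{lem-sufficent-martingle} and invoke the optional-stopping result (Proposition~\ref{prop-stopping-time}), so that the expected generation time equals $\E[\tau]\cdot(\text{EPT}+1)$. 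Substituting $\text{EPT}=\frac{m}{n}\E_{\tilde v}[\sigma(\tilde v)]$ from Lemma~\ref{lem-ept} and using Assumption~\ref{assump-optimal-value} (so that $\text{OPT}_{g+s}\ge\max_v\sigma(v)\ge\E_{\tilde v}[\sigma(\tilde v)]$ and $\text{OPT}_{g+s}\ge 1$) collapses $\frac{n}{\text{OPT}_{g+s}}\big(\frac{m}{n}\E_{\tilde v}[\sigma(\tilde v)]+1\big)$ to $O(m+n)$; together with $\text{LB}\ge 1$ (which lets me replace $\cN(\cP,\frac{\varepsilon}{L_1+L_2}\text{LB})$ by $\cN(\cP,\frac{\varepsilon}{2n^2L_h+2\lambda L_c})$ since $L_1+L_2=2n^2L_h+2\lambda L_c$) this yields the generation cost $O\big(\frac{(m+n)\ln(n^{\ell}\cN(\cP,\frac{\varepsilon}{2n^2L_h+2\lambda L_c}))}{\varepsilon^2}\big)$.

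Next I would count the per-call work. By Lemma~\ref{lem:generalGradient}, $\hat g_{\cR}$ is $\beta$-smooth with $\beta=\nu^{(1)}(\cR)n\beta_h+\nu^{(2)}(\cR)nL_h^2\le\beta_h n^2+L_h^2 n^3$; feeding this into Theorem~\ref{thm-proximal-grad} with the exact gradient ($\sigma=0$) and the additive-error targets $\Theta(\varepsilon x_i)$ (for-loop) and $\Theta(\varepsilon\,\text{LB})$ (final call) makes the iteration count $T=O\big(\frac{\beta_h n^2+L_h^2 n^3}{\varepsilon}\big)$ up to the diameter factor $\Delta^2$. Each step evaluates $\nabla\hat g_{\cR}$ in time $O((1+T_h)\sum_{R\in\cR}|R|)$, performs one proximal step in time $T_{prox}$, and evaluates $c$ in time $O(T_c)$. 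Since $|R|\le\omega(R)+1$, the expected total $\sum_{R\in\cR}|R|$ is again bounded by the same martingale argument, so multiplying $T$ by the gradient-evaluation cost reproduces $\frac{\beta_h n^2+L_h^2 n^3}{\varepsilon}\cdot(1+T_h)\frac{(m+n)\ln(\cdots)}{\varepsilon^2}$, while the proximal-step and cost-evaluation costs accumulate an extra $\log_2(n+\lambda k)$ factor across the for-loop rounds, giving $\frac{\log_2(n+\lambda k)(\beta_h n^2+L_h^2 n^3)T_{prox}}{\varepsilon}$ and $\frac{\beta_h n^2+L_h^2 n^3}{\varepsilon}\log_2(n+\lambda k)T_c$. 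Because $x_i$ halves each round, $\theta_i$ grows geometrically and the summed for-loop cost is dominated up to constants by its last round, so only this logarithmic factor survives; adding the (dominated) generation cost of the previous step gives the stated bound.

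The hard part will be the martingale / optional-stopping bookkeeping: verifying the hypotheses of Lemma~\ref{lem-sufficent-martingle} and the boundedness required by Proposition~\ref{prop-stopping-time}, and keeping the two independently generated batches of RR sets --- the accumulated for-loop family $\cR_{i_{ret}}$ and the freshly regenerated $\tilde\theta$ sets --- properly separated so that the stopping-time identity applies to each. A secondary, purely bookkeeping obstacle is establishing the geometric growth of $\theta_i$ so that the for-loop's total work is dominated by its final round and contributes only the $\log_2(n+\lambda k)$ factor where it appears.
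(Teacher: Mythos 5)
Your proposal matches the paper's proof essentially step for step: the same decomposition into RR-set generation versus oracle work, the same martingale/optional-stopping argument on the in-degree sums to handle the stopping time $\tau$, the same use of Lemma~\ref{lem-expectation-upperbound}, Lemma~\ref{lem-ept}, and Assumption~\ref{assump-optimal-value} to collapse the generation cost to $O\bigl((m+n)\ln(\cdots)/\varepsilon^2\bigr)$, and the same per-iteration accounting (smoothness bound $\beta_h n^2+L_h^2n^3$ from Lemma~\ref{lem:generalGradient} fed into Theorem~\ref{thm-proximal-grad}, with the $\log_2(n+\lambda k)$ factor on $T_{prox}$ and $T_c$ and the geometric domination of the for-loop by its last round). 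No substantive differences from the paper's argument.
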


\begin{proof}[Proof of Theorem \ref{thm-proximal-time}]
    Let $i_{ret}$ denote the index where Algorithm \ref{alg-sampling} break from the for-loop. First, note that in the sampling procedure, Algorithm \ref{alg-sampling} generate at most $2\theta_{i_{ret}}+\tilde\theta$ number of RR-sets, which is bounded by $O(\theta^{(1)}+\theta^{(2)}+\theta_{i_{ret}})$. We use $\tau = 2\theta_{i_{ret}}+\tilde\theta$ to denote the number of RR-sets. By Assumption \ref{assump-triggering-set}, generating such number of RR-sets needs time $O(\sum_{j=1}^{\tau}(\omega(R_j)+1))$. Let $W_i = \sum_{j=1}^{i}(\omega(R_j)-\text{EPT})$ for $i=1,\dots,\tau$. Because the procedure to generate $R_j$ is independent to the procedure that generates $R_1,\dots,R_{j-1}$, we have $\E[\omega(R_j)|\omega(R_1),\dots,\omega(R_{j-1})] = \E[\omega(R_j)] = \text{EPT}$. From Lemma \ref{lem-sufficent-martingle}, we know that $\{W_i\}_{i\le\tau}$ is a martingale, and $\tau$ is a stopping time. Obviously, $\tau$ has an upper bound, which can be derived by setting $x_{ret} =  1$ and $LB = 1$. Then by the stopping time theorem(Proposition \ref{prop-stopping-time}), the time complexity for generating RR-sets is $O(\E[\sum_{j=1}^{\tau}(\omega(R_j)+1)]) = O(\E[W_{\tau}]+\E[\tau\cdot(\text{EPT}+1)]) = O(\E[\theta^{(1)}+\theta^{(2)}+\theta_{i_{ret}}]\cdot (\text{EPT}+1)) = O(\E[\tau]\cdot(\text{EPT}+1))$.
    
    Then, we count the total time complexity for calling the proximal gradient oracle. Each time we use the proximal gradient to optimize $\hat g_{\cR_i}+s$ or $\hat g_{\cR}+s$, we will use $O\left(\frac{\beta}{\varepsilon x_i}\right)$ or $O\left(\frac{\beta}{\varepsilon \text{LB}}\right)$ number of iterations, where $\beta$ is the smoothness constant for $\hat g_{\cR_i}$ and $\hat g_{\cR}$. We can lower bound $x_i$ and $\text{OB}$ by $1$, and by Lemma \ref{lem-lipschitz-smoothness-general}, we have $\hat g_{\cR_i}$ and $\hat g_{\cR}$ are all $(\beta_h n^2 + L_h^2 n^3 )$-smooth. So each time the number of iteration is upper bounded by $N = O\left(\frac{\beta_h n^2 + L_h^2 n^3 }{\varepsilon}\right)$. Each time we need a proximal step, so the total proximal step time complexity is bounded by $O\left(\frac{\log_2(n+\lambda k)(\beta_h n^2 + L_h^2 n^3 )T_{prox}}{\varepsilon}\right)$. Then we consider the time complexity that generate the gradient. By Lemma \ref{lem-lipschitz-smoothness-general}, the time complexity of generating gradient is $O(\sum_{R\in\cR_i}|R|(1+T_h)+T_c)$ or $O(\sum_{R\in\cR}|R|(1+T_h)+T_c)$, then the total time complexity for generating gradient is bounded by
    \[O\left(N\cdot \left(2\sum_{R\in\cR_{i_{ret}}}|R|(1+T_h)+\sum_{R\in\cR}|R|(1+T_h)+\log_2(n+\lambda k)T_c\right)\right).\]
    By the martingale stopping time theorem(Proposition \ref{prop-stopping-time}), we know that
    \[\E[2\sum_{R\in\cR_{i_{ret}}}|R|+\sum_{R\in\cR}|R|] = \E[\tau]\cdot \E[|R_1|],\]
    and $|R_1|\le\omega(R_1)+1$ since each RR-set is weak connected. Then we have
    \[\E[2\sum_{R\in\cR_{i_{ret}}}|R|+\sum_{R\in\cR}|R|] \le \E[\tau]\cdot (\text{EPT}+1),\]
    and the total expected time complexity of generating gradient is bounded by
    \[O\left(N\cdot \left((1+T_h)(\text{EPT}+1)\E[\tau]+\log_2(n+\lambda k)T_c\right)\right).\]
    Then the total expected time complexity is bounded by(the time to generate RR-set is far less than the time to generate all of the gradient)
    \begin{align*}
        O\left(\frac{\beta_h n^2 + L_h^2 n^3 }{\varepsilon}\cdot \left((1+T_h)(\text{EPT}+1)\E[\tau]+\log_2(n+\lambda k)T_c\right)\right.\\
        +\left.\frac{\log_2(n+\lambda k)(\beta_h n^2 + L_h^2 n^3 )T_{prox}}{\varepsilon}\right).
    \end{align*}
    By Assumption \ref{assump-optimal-value}, Lemma \ref{lem-lipschitz-smoothness-general}, Lemma \ref{lem-expectation-upperbound} and Lemma \ref{lem-ept}, we have
    \begin{align*}
        (\text{EPT}+1)\E[\tau] =& O\left(\frac{n\cdot\ln\left(n^{\ell}\cN(\cP,\frac{\varepsilon}{2n^2L_h+2\lambda L_c})\text{LB}\right)}{\varepsilon^2\text{OPT}_{g+s}}\left(\frac{m}{n}\cdot \E_{\tilde v}[\sigma(\tilde v)]+1\right)\right) \\
        \le& O\left(\frac{(m+n)\cdot\ln\left(n^{\ell}\cN(\cP,\frac{\varepsilon}{2n^2L_h+2\lambda L_c})\right)}{\varepsilon^2}\right).
    \end{align*}
    The total time complexity is bounded by(let $T_h$ and $T_c$ be constants)
    \begin{align*}
        O\left(\frac{\beta_h n^2 + L_h^2 n^3 }{\varepsilon}\cdot \left((1+T_h)\frac{(m+n)\cdot\ln\left(n^{\ell}\cN(\cP,\frac{\varepsilon}{2n^2L_h+2\lambda L_c})\right)}{\varepsilon^2}+\log_2(n+\lambda k)T_c\right)\right.\\
        +\left.\frac{\log_2(n+\lambda k)(\beta_h n^2 + L_h^2 n^3 )T_{prox}}{\varepsilon}\right).
    \end{align*}
\end{proof}

\begin{restatable}{theorem}{thmconcaveupperboundtime}\label{thm-concave-upperbound-time}
    Under Assumption \ref{assump-triggering-set} and Assumption \ref{assump-optimal-value}. Suppose that the projection step can be finished in time $T_{proj}$,. Besides, suppose that $c(\bx)$ is $L_c$-Lipschitz and $\beta_c$-smooth, $q_{v,j}(x_j)$ are $L_q$-Lipschitz for all $v,j$, and the gradient and function value of $q_{v,l}(x_j)$ and $c(\bx)$ can be generated in time $T_q$ and $T_c$($L_c,L_q,\beta_c,T_q,T_c$ are all constants). We also assume that the balance variable $\lambda$ is also a constant and $n+\lambda k\le n^l$. Then the expected running time of Algorithm \ref{alg-meta} with proximal gradient descent oracle is bounded by
    \begin{align*}
        O\left(\frac{(n^2\sqrt{d}L_q+\lambda L_c)^2}{\varepsilon^2}\cdot \left((1+T_h)\frac{(m+n)\cdot\ln\left(n^{\ell}\cN(\cP,\frac{\varepsilon}{2n^2\sqrt{d}L_q+\lambda 2L_c})\right)}{\varepsilon^2}+\log_2(n+\lambda k)T_c\right)\right.\\
        +\left.\frac{(n^2\sqrt{d}L_q+\lambda L_c)^2\log_2(n+\lambda k)T_{proj}}{\varepsilon^2}\right).
    \end{align*}
\end{restatable}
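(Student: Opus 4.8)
The plan is to mirror the proof of Theorem~\ref{thm-proximal-time} almost step for step, substituting the projected-subgradient iteration count and the Lipschitz constants of the independent-strategy-activation case for the corresponding proximal-gradient quantities. As before, I would split the total running time of Algorithm~\ref{alg-meta} into two parts: (i) the cost of generating all RR-sets, and (ii) the total cost of the calls to the subgradient oracle $\cA$, where each call in turn decomposes into per-iteration subgradient evaluations plus the projection step. (Here I read the ``$T_{prox}$'' role as being played by the projection time $T_{proj}$ and the per-iteration gradient cost as using $T_q$ in place of $T_h$.)

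First I would control the total number of RR-sets. Exactly as in Theorem~\ref{thm-proximal-time}, the sampling procedure generates $\tau = 2\theta_{i_{ret}} + \tilde\theta = O(\theta^{(1)} + \theta^{(2)} + \theta_{i_{ret}})$ RR-sets, and Lemma~\ref{lem-expectation-upperbound} already bounds $\E[\theta^{(1)} + \theta^{(2)} + \theta_{i_{ret}}]$ in terms of $1/\text{OPT}_{g+s}$ and the covering number, now instantiated with $L_1 = L_2 = n^2\sqrt{d}L_q + \lambda L_c$ from Lemma~\ref{lem:independentGradient}. The cost of generating these RR-sets is $O(\sum_{j=1}^\tau (\omega(R_j) + 1))$; since $\tau$ is a stopping time bounded by a fixed constant (obtained by setting $x_{ret} = \text{LB} = 1$), I would set up the martingale $W_i = \sum_{j\le i}(\omega(R_j) - \text{EPT})$ and apply the optional-stopping theorem (Proposition~\ref{prop-stopping-time}) to conclude $\E[\sum_j (\omega(R_j)+1)] = O(\E[\tau](\text{EPT}+1))$, with $\text{EPT}$ evaluated via Lemma~\ref{lem-ept}.

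Second I would bound the oracle cost. The key difference from the proximal case is the iteration count: by Lemma~\ref{lem-concave-upperbound} the projected subgradient method reaches the required additive accuracy in $N = O((\Delta L_{\bar g+s}/\varepsilon)^2)$ iterations, and by Lemma~\ref{lem:independentGradient} we have $L_{\bar g+s} \le \nu^{(1)}(\cR) n\sqrt d L_q + \lambda L_c \le n^2\sqrt d L_q + \lambda L_c$ (using $\nu^{(1)}(\cR) \le n$), so after lower-bounding each guess $x_i$ (and $\text{LB}$) by $1$ we get $N = O((n^2\sqrt d L_q + \lambda L_c)^2/\varepsilon^2)$. Each iteration costs $O(\sum_{R\in\cR}|R|(1+T_q) + T_c)$ for the subgradient (Lemma~\ref{lem:independentGradient}) plus $T_{proj}$ for the projection. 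Applying the optional-stopping theorem a second time to replace $\E[\sum_{R}|R|]$ by $\E[\tau]\cdot\E[|R_1|]$, and using $|R_1|\le\omega(R_1)+1$, the total subgradient-evaluation cost becomes $O\!\big(N\cdot((1+T_q)(\text{EPT}+1)\E[\tau] + \log_2(n+\lambda k)T_c)\big)$ and the total projection cost $O(N\log_2(n+\lambda k)T_{proj})$. Substituting the bound on $(\text{EPT}+1)\E[\tau]$ from Lemmas~\ref{lem-expectation-upperbound} and~\ref{lem-ept} (where Assumption~\ref{assump-optimal-value} gives $\E_{\tilde v}[\sigma(\tilde v)]\le \text{OPT}_{g+s}$, so the $1/\text{OPT}_{g+s}$ factor is absorbed and only $(m+n)$ survives) yields exactly the claimed expression.

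The main obstacle, as in the proximal case, is the careful handling of the two nested sources of randomness — the random stopping time $\tau$ and the random RR-set sizes $|R_j|,\omega(R_j)$ — which cannot be decoupled into a naive product of expectations; the martingale optional-stopping argument is precisely what makes $\E[\sum_j \omega(R_j)] = \E[\tau]\,\text{EPT}$ rigorous, and it must be invoked both for RR-set generation and inside the per-iteration gradient cost. A secondary point to watch is that the subgradient method contributes a $1/\varepsilon^2$ factor (from the $1/\sqrt{T}$ convergence rate of non-smooth optimization) rather than the $1/\varepsilon$ of the smooth proximal method, which is why the final bound carries $(n^2\sqrt d L_q + \lambda L_c)^2/\varepsilon^2$ in place of the $(\beta_h n^2 + L_h^2 n^3)/\varepsilon$ appearing in Theorem~\ref{thm-proximal-time}.
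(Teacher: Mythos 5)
Your proposal is correct and follows exactly the route the paper takes: the paper's own proof of Theorem~\ref{thm-concave-upperbound-time} is literally ``almost the same as the previous one,'' changing only the iteration count to $O\left((n^2\sqrt{d}L_q+\lambda L_c)^2/\varepsilon^2\right)$ and the Lipschitz constant to $n^2\sqrt{d}L_q$, which is precisely the substitution you carry out (your write-up is in fact more detailed than the paper's, correctly re-invoking Lemmas~\ref{lem-expectation-upperbound}, \ref{lem-ept}, \ref{lem-concave-upperbound}, \ref{lem:independentGradient} and the optional-stopping argument from the proof of Theorem~\ref{thm-proximal-time}). The only cosmetic mismatch is that the theorem's displayed bound writes $(1+T_h)$ where your derivation naturally produces $(1+T_q)$; that appears to be a typo inherited from the proximal-case statement rather than a gap in your argument.
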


\begin{proof}[Proof of Theorem \ref{thm-concave-upperbound-time}]
    The proof of this theorem is almost the same as the previous one. We only have to change the iteration step from $O\left(\frac{\beta_h n^2 + L_h^2 n^3 }{\varepsilon}\right)$ to $O\left(\frac{(n^2\sqrt{d}L_q+\lambda L_c)^2}{\varepsilon^2}\right)$ and the Lipschitz constant for $\hat g_{\cR}$ and $g$ from $n^2L_h$ to $n^2\sqrt{d}L_q$.
\end{proof}

The above 2 theorems are the main theorem of the the time complexity. First, we have the fact that when $c(\bx) = ||\bx||_1$ or $c(\bx) = ||\bx||_2$, then proximal step and the projection step can all be finished in time $\tilde O(d)$, and the time to generate the function value and gradient of $c(\bx)$ is also $O(d)$. Besides, as shown in \cite{van2014probability}, 
	we know that the covering number 
	$\cN(\cP,\varepsilon) \le \cN(\mathbb B_1(k),\varepsilon) \le \cN(\mathbb B_2(k),\varepsilon) \le \left(3k/\varepsilon\right)^d$. Then since $||\bx||_1 \ge ||\bx_2||$, we have $\mathbb B_1(k) \subseteq\mathbb B_2(k)$, and $\cN(\mathbb B_1(k),\varepsilon) \le \left(3k/\varepsilon\right)^d$. We can directly get the time complexity bounds in Theorem \ref{thm:timesimple}.
	$\cN(\cP,\varepsilon) \le \cN(\mathbb B_1(k),\varepsilon) \le \cN(\mathbb B_2(k),\varepsilon) \le \left(3k/\varepsilon\right)^d$. Then since $||\bx||_1 \ge ||\bx_2||$, we have $\mathbb B_1(k) \subseteq\mathbb B_2(k)$, and $\cN(\mathbb B_1(k),\varepsilon) \le \left(3k/\varepsilon\right)^d$. We can directly get the time complexity bounds in Theorem \ref{thm:timesimple}.
	
	\subsection{Time Complexity Bound Based on the Moments of RR Set Size}
	\label{app:timeMoments}
	In this subsection, we give the proof of our time complexity bounds based on the moments of the size of the RR-sets and the optimal value $\text{OPT}_{g+s}$ for $\ProxGradRIS$ and $\UpperGradRIS$. We use $\nu^{(1)}$, $\nu^{(2)}$ and $\nu^{(3)}$ to denote the first, second and third moments of the mean size of a random generated RR-set. Formally, we have
	\[\nu^{(1)}= \E_{R}[|R|],\quad\nu^{(2)}=E_{R}[|R|^2], \quad\nu^{(3)}=E_{R}[|R|^3].\]
	Given the time complexity theorem in Theorem \ref{thm-proximal-time-heu} and \ref{thm-concave-upperbound-time-heu} and some statistics for the variable $\nu^{(1)},\nu^{(2)},\nu^{(3)}$, we can know why in experiments, $\ProxGradRIS$ and $\UpperGradRIS$ do not make so much difference with the heuristic greedy algorithm in terms of the running time.
	
	To derive the time complexity bounds based on the moments of the size of RR-sets, we need to slightly revise our sampling algorithm. In Algorithm \ref{alg-sampling}, we use the previous generated RR-sets and only generate $\theta_i - \theta_{i-1}$ RR-sets in round $i$ (line~\ref{line:generateRRset1} of Algorithm~\ref{alg-sampling}). 
	However, in this subsection, we assume that we generate $\theta_i$ RR-sets in round $i$ and we do not use the previous generated RR-sets. 
	This is because we need to use the martingale stopping time for each round, and to construct a martingale, we need the empirical moments $\nu^{(1)}(\cR_i),\nu^{(2)}(\cR_i), \nu^{(3)}(\cR_i)$ to be independent to $\cR_{j}$ for all $j < i$.
	The following theorem summarizes the time complexity of $\ProxGradRIS$ 
	with the above resampling of RR sets adjustment.
	
	\begin{restatable}{theorem}{thmproximaltimeheu}\label{thm-proximal-time-heu}
    Under Assumption \ref{assump-triggering-set} and Assumption \ref{assump-optimal-value}. Suppose that the proximal step can be finished in time $T_{prox}$,. Besides, suppose that $c(\bx)$ is $L_c$-Lipschitz and $\beta_c$-smooth, $h_v(\bx)$ are $L_h$-Lipschitz and $\beta_h$-smooth for all $v$, and the gradient of $h_v(\bx)$ and $c(\bx)$ can be generated in time $T_h$ and $T_c$. We also assume that the balance variable $\lambda$ is also a constant and $n+\lambda k\le n^l$. Then the expected running time of
    $\ProxGradRIS$ (revised by resampling of RR sets in each sampling iteration)
    is bounded by
    \begin{align*}
       & O\left(\frac{n^2(\nu^{(2)}\beta_h + \nu^{(3)} L_h^2)\ln\left(n^{\ell}\cN(\cP,\frac{\varepsilon}{2n^2\sqrt{d}L_q+\lambda 2L_c})\right)}{\varepsilon^3\text{OPT}_{g+s}} (1+T_h)\right.\\
       & \left.+ \frac{\nu^{(1)} n \beta_h + \nu^{(2)} n L_h^2}{\varepsilon}\log_2(n+\lambda k)(T_c+T_{prox})
        +\frac{(m+n)\cdot\ln\left(n^{\ell}\cN(\cP,\frac{\varepsilon}{2n^2\sqrt{d}L_q+\lambda 2L_c})\right)}{\varepsilon^2}\right).
    \end{align*}
    \end{restatable}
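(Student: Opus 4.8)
The plan is to retrace the proof of Theorem~\ref{thm-proximal-time} line by line, but to replace the conservative smoothness constant $\beta_h n^2 + L_h^2 n^3$ by the sample-dependent smoothness $\nu^{(1)}(\cR_i)n\beta_h + \nu^{(2)}(\cR_i)nL_h^2$ supplied by Lemma~\ref{lem:generalGradient}, and then to convert the resulting products of \emph{empirical} moments into the true moments $\nu^{(1)},\nu^{(2)},\nu^{(3)}$ via a martingale stopping-time argument. I would split the total cost into three parts: (i) generating the RR sets, (ii) the exact gradient evaluations inside the proximal-gradient oracle, and (iii) the proximal steps themselves, treating $\Delta$, $L_h$, $\beta_h$, $\lambda$ as constants as in Theorem~\ref{thm-proximal-time}.

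For part (i) I would argue exactly as before: under Assumption~\ref{assump-triggering-set} generating $\tau$ RR sets costs $O(\sum_j(\omega(R_j)+1))$, and setting $W_i=\sum_{j\le i}(\omega(R_j)-\text{EPT})$ produces a martingale (Lemma~\ref{lem-sufficent-martingle}), so Proposition~\ref{prop-stopping-time}, Lemma~\ref{lem-ept} and Assumption~\ref{assump-optimal-value} give an expected cost $O\!\big((\text{EPT}+1)\E[\tau]\big)=O\!\big(\tfrac{(m+n)\ln(\cdots)}{\varepsilon^2}\big)$, where the $\text{OPT}_{g+s}$ hidden in $\E[\tau]$ cancels against $\text{EPT}=\tfrac{m}{n}\E_{\tilde v}[\sigma(\tilde v)]$; this is the third term. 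For part (iii), by Theorem~\ref{thm-proximal-grad} with $\sigma=0$ each oracle call runs $N_i=O(\beta_i/\varepsilon)$ iterations (lower-bounding the error scale $x_i,\LB\ge 1$) with $\beta_i=\nu^{(1)}(\cR_i)n\beta_h+\nu^{(2)}(\cR_i)nL_h^2$; since the RR sets of round $i$ are freshly resampled and i.i.d.\ we have $\E[\beta_i]=\nu^{(1)}n\beta_h+\nu^{(2)}nL_h^2$, and there are $O(\log_2(n+\lambda k))$ calls, each iteration paying one proximal step $T_{prox}$ and one evaluation of $\nabla s$ in time $T_c$, yielding the middle term.

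The substantive work is part (ii). One gradient evaluation on $\cR_i$ costs $O\!\big((1+T_h)\sum_{R\in\cR_i}|R|\big)=O\!\big((1+T_h)\theta_i\nu^{(1)}(\cR_i)\big)$ by the formula and timing of Lemma~\ref{lem:generalGradient}, so the gradient cost of round $i$ is proportional to
\[
\beta_i\sum_{R\in\cR_i}|R|
= n\beta_h\,\theta_i\big(\nu^{(1)}(\cR_i)\big)^2 + nL_h^2\,\theta_i\,\nu^{(2)}(\cR_i)\nu^{(1)}(\cR_i).
\]
The obstacle I expect to be hardest is that $\beta_i$ and the per-iteration cost $\sum_{R\in\cR_i}|R|$ are built from the \emph{same} random RR sets, so the product cannot simply be factored in expectation. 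I would resolve this with two deterministic moment inequalities followed by one probabilistic step: Cauchy--Schwarz gives $\theta_i(\nu^{(1)}(\cR_i))^2\le\sum_{R\in\cR_i}|R|^2$ and the Chebyshev sum inequality (applied to the similarly ordered sequences $|R|$ and $|R|^2$) gives $\theta_i\nu^{(2)}(\cR_i)\nu^{(1)}(\cR_i)\le\sum_{R\in\cR_i}|R|^3$, so the display is at most $n\beta_h\sum_{R\in\cR_i}|R|^2+nL_h^2\sum_{R\in\cR_i}|R|^3$.

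Finally I would sum over all rounds and the final regeneration and take expectations. Here the resampling modification is essential: because each round draws fresh sets, the concatenated sequence $R_1,\dots,R_\tau$ is i.i.d.\ while $\tau$ is a bounded stopping time, so the martingales $\sum_{j\le i}(|R_j|^2-\nu^{(2)})$ and $\sum_{j\le i}(|R_j|^3-\nu^{(3)})$ together with Proposition~\ref{prop-stopping-time} give $\E\big[\sum_{j\le\tau}|R_j|^2\big]=\nu^{(2)}\E[\tau]$ and $\E\big[\sum_{j\le\tau}|R_j|^3\big]=\nu^{(3)}\E[\tau]$ (reusing sets across rounds would make $\nu^{(\cdot)}(\cR_i)$ depend on $\cR_j$, $j<i$, and break this identity). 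Substituting the uniform iteration factor $1/\varepsilon$ and $\E[\tau]=O\!\big(\tfrac{n\ln(\cdots)}{\varepsilon^2\text{OPT}_{g+s}}\big)$ from Lemma~\ref{lem-expectation-upperbound} then produces the first term $\tfrac{n^2(\nu^{(2)}\beta_h+\nu^{(3)}L_h^2)\ln(\cdots)}{\varepsilon^3\text{OPT}_{g+s}}(1+T_h)$, and collecting the three contributions completes the bound.
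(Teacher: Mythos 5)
Your proposal is correct and follows essentially the same route as the paper's proof: the same three-way cost decomposition, the same deterministic moment inequalities $\big(\nu^{(1)}(\cR_i)\big)^2\le\nu^{(2)}(\cR_i)$ and $\nu^{(1)}(\cR_i)\nu^{(2)}(\cR_i)\le\nu^{(3)}(\cR_i)$ (the paper proves the latter via $a^3+b^3\ge a^2b+ab^2$, which is your Chebyshev sum inequality), and the same observation that resampling is what makes the stopping-time identity legitimate. The only cosmetic difference is that you index the martingale by individual RR sets while the paper blocks it by rounds via $Y_i-Z_i$; these are the same argument.
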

	
	\begin{proof}[Proof of Theorem \ref{thm-proximal-time-heu}]
    The first part is to compute the expected time to generate the RR-sets. This part is similar to the same part in the proof of Theorem \ref{thm-proximal-time} and \ref{thm-concave-upperbound-time}. Let $i_{ret}$ denote the index where Algorithm \ref{alg-sampling} break from the for-loop, and we know that Algorithm \ref{alg-sampling} generate at most $2\theta_{i_{ret}}+\tilde\theta$ number of RR-sets, which is bounded by $O(\theta^{(1)}+\theta^{(2)}+\theta_{i_{ret}})$. We use $\tau = 2\theta_{i_{ret}}+\tilde\theta$ to denote the number of RR-sets. By Assumption \ref{assump-triggering-set}, generating such number of RR-sets needs time $O(\sum_{j=1}^{\tau}(\omega(R_j)+1))$. Let $W_i = \sum_{j=1}^{i}(\omega(R_j)-\text{EPT})$ for $i=1,\dots,\tau$. Because the procedure to generate $R_j$ is independent to the procedure that generates $R_1,\dots,R_{j-1}$, we have $\E[\omega(R_j)|\omega(R_1),\dots,\omega(R_{j-1})] = \E[\omega(R_j)] = \text{EPT}$. From Lemma \ref{lem-sufficent-martingle}, we know that $\{W_i\}_{i\le\tau}$ is a martingale, and $\tau$ is a stopping time. Obviously, $\tau$ has an upper bound, which can be derived by setting $x_{ret} =  1$ and $LB = 1$. Then by the stopping time theorem(Proposition \ref{prop-stopping-time}), the expected time complexity for generating RR-sets is $O(\E[\sum_{j=1}^{\tau}(\omega(R_j)+1)]) = O(\E[W_{\tau}]+\E[\tau\cdot(\text{EPT}+1)]) = O(\E[\theta^{(1)}+\theta^{(2)}+\theta_{i_{ret}}]\cdot (\text{EPT}+1)) = O(\E[\tau]\cdot(\text{EPT}+1))$. Based on Assumeption \ref{assump-triggering-set} and \ref{assump-optimal-value} and Lemma \ref{lem-expectation-upperbound} and \ref{lem-ept}, we know that
    \begin{align*}
        O(\E[\tau]\cdot(\text{EPT}+1)) =& O\left(\frac{n\cdot\ln\left(n^{\ell}\cN(\cP,\frac{\varepsilon}{L_1+L_2}\text{LB})\right)}{\varepsilon^2\text{OPT}_{g+s}}\cdot\left(\frac{m}{n}\cdot \E_{\tilde v}[\sigma(\tilde v)]+1\right)\right)\\
        =&O\left(\frac{(m+n)\cdot\ln\left(n^{\ell}\cN(\cP,\frac{\varepsilon}{2n^2\sqrt{d}L_q+\lambda 2L_c})\right)}{\varepsilon^2}\right).
    \end{align*}
    Then we count the time to generate the gradients and the proximal steps. In round $i$, Algorithm \ref{alg-sampling} will use the RR-sets $\cR_i$ with $|\cR_i| = \theta_i$. In $\ProxGradRIS$, in the $i$-th round, the algorithm iterates for $O(\frac{\nu^{(1)}(\cR_i) n \beta_h + \nu^{(2)}(\cR_i) n L_h^2}{\varepsilon})$ times. Then, the total time to generate the gradient and to complete the proximal steps in round $i$ are bounded by
    \begin{align*}
        &O\left(\frac{\nu^{(1)}(\cR_i) n \beta_h + \nu^{(2)}(\cR_i) n L_h^2}{\varepsilon}\left(\sum_{R\in\cR_i}|R|(1+T_h)+T_c\right)\right)\\
        =& O\left(\frac{\nu^{(1)}(\cR_i) n \beta_h + \nu^{(2)}(\cR_i) n L_h^2}{\varepsilon}\left(\theta_i \nu^{(1)}(\cR_i)(1+T_h)+T_c\right)\right)\\
        \le& O\left(\frac{\nu^{(2)}(\cR_i) n \beta_h + \nu^{(3)}(\cR_i) n L_h^2}{\varepsilon}\theta_i (1+T_h)+\frac{\nu^{(1)}(\cR_i) n \beta_h + \nu^{(2)}(\cR_i) n L_h^2}{\varepsilon}T_c\right),
    \end{align*}
    where the last line comes from the fact that $\nu^{(1)}(\cR_i)^2 \le \nu^{(2)}(\cR_i)$ and $\nu^{(1)}(\cR_i) \nu^{(2)}(\cR_i)\le \nu^{(3)}(\cR_i)$. $\nu^{(1)}(\cR_i)^2 \le \nu^{(2)}(\cR_i)$ comes directly from the Cauchy-Schwatz inequality, and $\nu^{(1)}(\cR_i) \nu^{(2)}(\cR_i)\le \nu^{(3)}(\cR_i)$ comes from the fact that for any $a,b\ge 0$, we have $a^3 + b^3 \ge ab^2 + a^2b$. Then we sum up all of them from $i=1$ to $i=i_{ret}$ and bound the expectation. We want to bound
    \[O\left(\E\left[\sum_{i=1}^{i_{ret}}\frac{\nu^{(2)}(\cR_i) n \beta_h + \nu^{(3)}(\cR_i) n L_h^2}{\varepsilon}\theta_i (1+T_h)+\frac{\nu^{(1)}(\cR_i) n \beta_h + \nu^{(2)}(\cR_i) n L_h^2}{\varepsilon}T_c\right]\right).\]
    First note that the second part is easy to compute, and it is just bounded by
    \[O\left(\frac{\nu^{(1)} n \beta_h + \nu^{(2)} n L_h^2}{\varepsilon}\log_2(n+\lambda k)T_c\right),\]
    since we know that $i_{ret} \le \log_2(n+\lambda k)$. Then we bound the first term. For convenience, we use the following notations
    \[Y_i = \frac{\nu^{(2)}(\cR_i) n \beta_h + \nu^{(3)}(\cR_i) n L_h^2}{\varepsilon}\theta_i (1+T_h), \quad Z_i = \frac{\nu^{(2)} n \beta_h + \nu^{(3)} n L_h^2}{\varepsilon}\theta_i (1+T_h).\]
    
    It is easy to show that $\E Y_i = Z_i$, and let $W_j = \sum_{i=1}^j(Y_i - Z_i)$. We will show that $W_j$ is a martingale. This is due to the fact that since we generate new RR-sets in each round, and each RR-set is independent to others, then $Y_i - Z_i$ is independent to all of the information before round $i$, then $W_j$ is a martingale. Also notice that $i_{ret}$ is a stopping time, since we only decide if the sampling procedure will stop based on the previous information. By the Martingale Stopping Time theorem(Proposition \ref{prop-stopping-time}), we have
    \[\E \sum_{i=1}^{i_{ret}}(Y_i - Z_i) = \E[Y_1-Z_1]\E[i_{ret}] = 0.\]
    Then we only have to bound $O(\E \sum_{i=1}^{i_{ret}}Z_i)$. We have the following
    \[\sum_{i=1}^{i_{ret}}Z_i = \sum_{i=1}^{i_{ret}}\frac{\nu^{(2)} n \beta_h + \nu^{(3)} n L_h^2}{\varepsilon}\theta_i (1+T_h) \le 2\frac{\nu^{(2)} n \beta_h + \nu^{(3)} n L_h^2}{\varepsilon}\theta_{i_{ret}} (1+T_h).\]
    
    Similarly, the total time to generate the gradient and to complete the proximal steps which are not counted in the sampling procedure(Algorithm \ref{alg-sampling}) are bounded by
    \[O\left(\frac{\nu^{(2)}(\cR) n \beta_h + \nu^{(3)}(\cR) n L_h^2}{\varepsilon}\tilde\theta (1+T_h)+\frac{\nu^{(1)}(\cR) n \beta_h + \nu^{(2)}(\cR) n L_h^2}{\varepsilon}T_c\right).\]
    Taking the expectation, because we generate the RR-sets independently and thus $\nu^{(2)}(\cR),\nu^{(3)}(\cR)$ are independent to $\tilde\theta$, we have the following bound for the expected time
    \[O\left(\frac{\nu^{(2)} n \beta_h + \nu^{(3)} n L_h^2}{\varepsilon}\E[\tilde\theta] (1+T_h)+\frac{\nu^{(1)} n \beta_h + \nu^{(2)} n L_h^2}{\varepsilon}T_c\right).\]
    Then from Lemma \ref{lem-expectation-upperbound}, we have
    \[\frac{\nu^{(2)} n \beta_h + \nu^{(3)} n L_h^2}{\varepsilon}\E[2\theta_{i_{ret}}+\tilde\theta] (1+T_h) \le \frac{n^2(\nu^{(2)}\beta_h + \nu^{(3)} L_h^2)\ln\left(n^{\ell}\cN(\cP,\frac{\varepsilon}{2n^2\sqrt{d}L_q+\lambda 2L_c})\right)}{\varepsilon^3\text{OPT}_{g+s}} (1+T_h).\]
    Also, it is easy to show that the expected time for proximal steps is bounded by
    \[\frac{\nu^{(1)} n \beta_h + \nu^{(2)} n L_h^2}{\varepsilon}\log_2(n+\lambda k)T_{prox},\]
    since the algorithm applies the proximal steps in each iteration, which is the same as calling the gradient of function $c(\bx)$.
    
    Combining all of them together(generating gradient, proximal steps, generating RR-sets), we know that the expected time complexity is
    \begin{align*}
        &O\left(\frac{n^2(\nu^{(2)}\beta_h + \nu^{(3)} L_h^2)\ln\left(n^{\ell}\cN(\cP,\frac{\varepsilon}{2n^2\sqrt{d}L_q+\lambda 2L_c})\right)}{\varepsilon^3\text{OPT}_{g+s}} (1+T_h)\right.\\
       & \left.+ \frac{\nu^{(1)} n \beta_h + \nu^{(2)} n L_h^2}{\varepsilon}\log_2(n+\lambda k)(T_c+T_{prox})
        +\frac{(m+n)\cdot\ln\left(n^{\ell}\cN(\cP,\frac{\varepsilon}{2n^2\sqrt{d}L_q+\lambda 2L_c})\right)}{\varepsilon^2}\right).
    \end{align*}
    \end{proof}
    
We remark that, when comparing 
Theorem~\ref{thm-proximal-time-heu} with Theorem~\ref{thm-proximal-time}, 
if we do the following relaxations for the corresponding terms
in the bound of Theorem~\ref{thm-proximal-time-heu}: $\nu^{(2)}\le \nu^{(1)} \cdot n$, 
$\nu^{(3)}\le \nu^{(1)} \cdot n^2$, $\nu^{(1)} \le n$, and $\nu^{(2)} \le n^2$, 
together with $\nu{(1)} /  \text{OPT}_{g+s} \le m/n$ (Lemma~\ref{lem-ept}), 
then we will have the bound given in Theorem~\ref{thm-proximal-time}.
This indicates how loose is the bounds we give in Theorem~\ref{thm:timesimple} in the main text.
In our experiments (Section~\ref{sec:experiment}), we will demonstrate how much this relaxation is 
	numerically in our dataset.

The following theorem summarizes our result for the $\UpperGradRIS$ algorithm,
	revised with the resampling of RR sets in each iteration step as described before.
    \begin{restatable}{theorem}{thmconcaveupperboundtimeheu}\label{thm-concave-upperbound-time-heu}
    Under Assumption \ref{assump-triggering-set} and Assumption \ref{assump-optimal-value}. Suppose that the projection step can be finished in time $T_{proj}$,. Besides, suppose that $c(\bx)$ is $L_c$-Lipschitz and $\beta_c$-smooth, $q_{v,j}(x_j)$ are $L_q$-Lipschitz for all $v,j$, and the gradient and function value of $q_{v,l}(x_j)$ and $c(\bx)$ can be generated in time $T_q$ and $T_c$($L_c,L_q,\beta_c,T_q,T_c$ are all constants). We also assume that the balance variable $\lambda$ is also a constant and $n+\lambda k\le n^l$. Then the expected running time of $\UpperGradRIS$ 
    (revised by resampling of RR sets in each sampling iteration)
     is bounded by
    \begin{align*}
        O\left(\frac{n(n^2d\nu^{(3)}L_q^2 + 2\lambda n\sqrt{d}\nu^{(2)}L_qL_c + \lambda^2L_c^2)\ln\left(n^{\ell}\cN(\cP,\frac{\varepsilon}{2n^2\sqrt{d}L_q+\lambda 2L_c})\right)}{\varepsilon^4\text{OPT}_{g+s}} (1+T_h)\right.\\
        + \frac{(n^2d\nu^{(3)}L_q^2 + 2\lambda n\sqrt{d}\nu^{(2)}L_qL_c + \lambda^2L_c^2)}{\varepsilon^2}\log_2(n+\lambda k)(T_c+T_{proj})\\
        \left.+\frac{(m+n)\cdot\ln\left(n^{\ell}\cN(\cP,\frac{\varepsilon}{2n^2\sqrt{d}L_q+\lambda 2L_c})\right)}{\varepsilon^2}\right).
    \end{align*}
    \end{restatable}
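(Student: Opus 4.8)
The plan is to follow the proof of Theorem~\ref{thm-proximal-time-heu} essentially verbatim, replacing only the ingredients specific to the proximal-gradient oracle by their projected-subgradient counterparts, exactly as the proof of Theorem~\ref{thm-concave-upperbound-time} adapted that of Theorem~\ref{thm-proximal-time}. Three quantities change: (i) the per-round iteration count, which by Lemma~\ref{lem-concave-upperbound} together with the Lipschitz bound $L_{\bar g_{\cR_i}+s}\le \nu^{(1)}(\cR_i)n\sqrt d L_q+\lambda L_c$ from Lemma~\ref{lem:independentGradient} becomes $O\big((\nu^{(1)}(\cR_i)n\sqrt d L_q+\lambda L_c)^2/\varepsilon^2\big)$ in round $i$, rather than the smoothness-based count of the proximal case; (ii) the per-iteration work, now a subgradient evaluation (Eq.~\eqref{eq:subgradientbarg}) costing $O(\sum_{R\in\cR_i}|R|(1+T_q)+T_c)=O(\theta_i\nu^{(1)}(\cR_i)(1+T_q)+T_c)$ plus a projection step of cost $T_{proj}$; and (iii) the global Lipschitz constants fed to $\GradRIS$, namely $L_1=L_2=n^2\sqrt d L_q+\lambda L_c$ from Lemma~\ref{lem:independentGradient}, which control the covering-number term $\cN(\cP,\frac{\varepsilon}{L_1+L_2}\text{LB})$ inside Lemma~\ref{lem-expectation-upperbound}.

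First I would reproduce the RR-set generation bound unchanged: with $\tau=2\theta_{i_{ret}}+\tilde\theta$ and $W_i=\sum_{j\le i}(\omega(R_j)-\text{EPT})$, Lemma~\ref{lem-sufficent-martingle} makes $\{W_i\}$ a martingale with stopping time $\tau$, so Proposition~\ref{prop-stopping-time} gives total generation time $O(\E[\tau](\text{EPT}+1))$; combining Lemma~\ref{lem-ept}, Lemma~\ref{lem-expectation-upperbound} and Assumption~\ref{assump-optimal-value} collapses this to the third summand $O\big((m+n)\ln(n^\ell\cN(\cdots))/\varepsilon^2\big)$. Next I would bound the per-round subgradient-plus-projection work: multiplying iteration count by per-iteration cost and expanding the square yields, for round $i$, a leading term proportional to $\theta_i\,\nu^{(1)}(\cR_i)\,(\nu^{(1)}(\cR_i)n\sqrt d L_q+\lambda L_c)^2(1+T_q)/\varepsilon^2$ plus the $T_c$- and $T_{proj}$-terms carrying the bare factor $(\nu^{(1)}(\cR_i)n\sqrt d L_q+\lambda L_c)^2/\varepsilon^2$. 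I would then apply the elementary moment inequalities $(\nu^{(1)})^2\le\nu^{(2)}$ and $(\nu^{(1)})^3\le\nu^{(3)}$ — valid because every RR set contains its root, so $|R|\ge1$ and hence $|R|^2\le|R|^3$ — to turn $\nu^{(1)}(\cR_i)(\nu^{(1)}(\cR_i)n\sqrt d L_q+\lambda L_c)^2$ into $n^2 d\,\nu^{(3)}(\cR_i)L_q^2+2\lambda n\sqrt d\,\nu^{(2)}(\cR_i)L_qL_c+\lambda^2 L_c^2\nu^{(1)}(\cR_i)$, matching the stated factor up to the benign $\nu^{(1)}$ on the lowest-order term.

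The crux is summing over rounds while converting the data-dependent empirical moments $\nu^{(\cdot)}(\cR_i)$ into the population moments $\nu^{(\cdot)}$ of the statement. This is precisely where the resampling modification is essential: because each $\cR_i$ is drawn afresh and independently of $\cR_1,\dots,\cR_{i-1}$, if I set $Y_i$ to be the random per-round cost and $Z_i=\E[Y_i]$ its population version — obtained by replacing each $\nu^{(r)}(\cR_i)$ by $\nu^{(r)}$ — then $\sum_{i\le j}(Y_i-Z_i)$ is a martingale and $i_{ret}$ is a stopping time, so Proposition~\ref{prop-stopping-time} yields $\E\sum_{i\le i_{ret}}Y_i=\E\sum_{i\le i_{ret}}Z_i$. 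Since $\theta_i$ roughly doubles as $x_i$ halves, $\sum_{i\le i_{ret}}Z_i\le 2Z_{i_{ret}}$, and taking expectations with $\E[\theta_{i_{ret}}]=O\big(n\ln(n^\ell\cN(\cdots))/(\varepsilon^2\text{OPT}_{g+s})\big)$ from Lemma~\ref{lem-expectation-upperbound} produces the first summand (with its extra factor $n/\varepsilon^2$ and the $1/\text{OPT}_{g+s}$) and, for the $T_c,T_{proj}$ contributions, the second summand scaled by $\log_2(n+\lambda k)$. The same martingale argument applied to the final batch of $\tilde\theta$ RR sets contributes identical orders, completing the bound after adding the three parts.

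I expect the only genuinely delicate step to be the martingale bookkeeping of the third paragraph: verifying that $i_{ret}$ is a legitimate stopping time with respect to the per-round filtration and that the optional-stopping hypotheses hold — in particular the deterministic boundedness of $\tau$, obtained from the worst case $x_{ret}=\text{LB}=1$ — so that the empirical third moment $\nu^{(3)}(\cR_i)$ may be safely replaced by its population value $\nu^{(3)}$. All remaining manipulations are the routine moment inequalities and the geometric summation already carried out in Theorem~\ref{thm-proximal-time-heu}.
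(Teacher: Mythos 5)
Your proposal is correct and follows essentially the same route as the paper: the paper's own proof of this theorem is a one-line reduction to Theorem~\ref{thm-proximal-time-heu}, swapping the smoothness-based iteration count for the subgradient count $O\bigl((\nu^{(1)}(\cR)n\sqrt{d}L_q+\lambda L_c)^2/\varepsilon^2\bigr)$ and the Lipschitz constant $n^2L_h$ for $n^2\sqrt{d}L_q$, which is exactly the substitution you carry out (with the martingale/stopping-time bookkeeping inherited unchanged). Your only loose point is the justification of $(\nu^{(1)})^3\le\nu^{(3)}$, which should come from $(\nu^{(1)})^2\le\nu^{(2)}$ and $\nu^{(1)}\nu^{(2)}\le\nu^{(3)}$ (as in the proof of Theorem~\ref{thm-proximal-time-heu}) rather than from $|R|\ge 1$, but this does not affect the argument.
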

\begin{proof}[Proof of Theorem \ref{thm-concave-upperbound-time-heu}]
    The proof of this theorem is almost the same as the previous one. We only have to change the iteration step from $O\left(\frac{\nu^{(1)}(\cR) n \beta_h + \nu^{(2)}(\cR) n L_h^2}{\varepsilon}\right)$ to $O\left(\frac{(\nu^{(1)}(\cR) n \sqrt{d}L_q+\lambda L_c)^2}{\varepsilon^2}\right)$ and the Lipschitz constant for $\hat g_{\cR}$ and $g$ from $n^2L_h$ to $n^2\sqrt{d}L_q$.
\end{proof}

\section{Properties of the Original Function $g(\bx)$} \label{app:original}
In this section, we discuss the properties of the original function $g(\bx)$. We will compute the gradient of $g(\bx)$ and show how to compute the stochastic gradient of $g(\bx)$. Then we give upper bounds for the smoothness constant of the function $g(\bx)$ and
	the variance of 
	the stochastic gradient estimator $\widehat{\nabla g}(\bx)$
	in terms of its L2-norm difference with the
	true gradient, defined as $\text{Var} = \E[||\widehat{\nabla g}(\bx)-\nabla g(\bx)||_2^2]$.
These bounds would lead to our settings of the step size and number of iterations for the
	stochastic gradient method on the original objective function $g(\bx)$.


The following lemma provides the exact gradient formula for $g(\bx)$.
\begin{lemma} \label{lem:gradientOrg}
The gradient of function $g(\bx)$ can be written as:
\begin{align*}
\nabla g(\bx) =& \sum_{u'\in V}\left[\sum_{S:u'\notin S}\left(\sigma(S\cup\{u'\})-\sigma(S)\right)\left(\prod_{u\in S}h_u(\bx)\right)\left(\prod_{v\notin S \cup \{u'\}}(1-h_v(\bx))\right)\cdot\nabla h_{u'}(\bx)\right]. \\
\end{align*}
\end{lemma}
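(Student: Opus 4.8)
The plan is to differentiate the defining sum for $g$ term by term and then regroup by collecting the coefficient of each $\nabla h_{u'}(\bx)$. Since each $\sigma(S)$ is a constant independent of $\bx$, and $g(\bx) = \sum_{S\subseteq V}\sigma(S)\prod_{u\in S}h_u(\bx)\prod_{v\notin S}(1-h_v(\bx))$ is a finite sum, I may freely exchange $\nabla$ with the sum over $S$ and apply the product rule to each inner product. For a fixed $S$, differentiating picks up one $\nabla h_w(\bx)$ term for each node $w$: a $+\nabla h_w$ contribution when $w\in S$ (from the factor $h_w$) and a $-\nabla h_w$ contribution when $w\notin S$ (from the factor $1-h_w$). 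This is exactly the computation already carried out in the proof of Lemma~\ref{lem-lipschitz-smoothness-general}, which yields $\nabla g(\bx) = \sum_{u'\in V}f_{u'}(\bx)\,\nabla h_{u'}(\bx)$ with
\[
f_{u'}(\bx) = \sum_{S:u'\in S}\sigma(S)\Big(\prod_{u\in S,u\neq u'}h_u(\bx)\Big)\Big(\prod_{v\notin S}(1-h_v(\bx))\Big) - \sum_{T:u'\notin T}\sigma(T)\Big(\prod_{u\in T}h_u(\bx)\Big)\Big(\prod_{v\notin T,v\neq u'}(1-h_v(\bx))\Big).
\]
Thus the statement reduces to showing that $f_{u'}(\bx)$ equals the claimed marginal-gain coefficient.

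The key step is a reindexing that pairs each subset containing $u'$ with the subset obtained by deleting $u'$. In the first sum I substitute $S = S'\cup\{u'\}$ with $u'\notin S'$; this is a bijection between $\{S : u'\in S\}$ and $\{S' : u'\notin S'\}$, and under it $\prod_{u\in S,u\neq u'}h_u = \prod_{u\in S'}h_u$ while $\prod_{v\notin S}(1-h_v) = \prod_{v\notin S'\cup\{u'\}}(1-h_v)$. In the second sum I simply rename $T$ as $S'$ (so $u'\notin S'$), and note $\prod_{v\notin T,v\neq u'}(1-h_v) = \prod_{v\notin S'\cup\{u'\}}(1-h_v)$. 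Both sums are now indexed by the same collection $\{S' : u'\notin S'\}$ and share the identical product factor $\big(\prod_{u\in S'}h_u\big)\big(\prod_{v\notin S'\cup\{u'\}}(1-h_v)\big)$, so I may combine them termwise; the coefficients $\sigma(S'\cup\{u'\})$ and $-\sigma(S')$ add to the marginal gain $\sigma(S'\cup\{u'\}) - \sigma(S')$. Renaming $S'$ back to $S$ gives precisely
\[
f_{u'}(\bx) = \sum_{S:u'\notin S}\big(\sigma(S\cup\{u'\})-\sigma(S)\big)\Big(\prod_{u\in S}h_u(\bx)\Big)\Big(\prod_{v\notin S\cup\{u'\}}(1-h_v(\bx))\Big),
\]
and substituting into $\nabla g(\bx) = \sum_{u'\in V}f_{u'}(\bx)\nabla h_{u'}(\bx)$ yields the claim.

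Nothing here is analytically deep; the only points requiring care are the bookkeeping ones, which I regard as the main obstacle. I would verify that the substitution $S\mapsto S\cup\{u'\}$ really is a bijection onto the $u'$-free subsets (it is, with inverse $S'\mapsto S'\setminus\{u'\}$), and that the two product factors genuinely coincide after the substitution so that the two sums can be merged under one summation. The sign tracking in the product rule — positive from nodes inside $S$, negative from nodes outside — is the one place an error could slip in, so I would double-check it on the smallest nontrivial case. The interchange of gradient and summation is immediate because $V$ is finite.
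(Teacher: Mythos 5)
Your proposal is correct and follows essentially the same route as the paper's proof: differentiate the finite sum defining $g$ via the product rule, collect the coefficient of each $\nabla h_{u'}(\bx)$ into $f_{u'}(\bx)$, and reindex the subsets containing $u'$ by deleting $u'$ so the two sums merge into the marginal-gain form. The paper states this last reindexing step without spelling out the bijection, which you do explicitly; there is no substantive difference.
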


\begin{proof}
We first recall the expression of the function $g(\bx)$,

\[g(\bx) = \E_S[\sigma(S)] = \sum_{S\subseteq V}\left(\sigma(S)\left(\prod_{u\in S}h_u(\bx)\right)\left(\prod_{v\notin S}(1-h_v(\bx))\right)\right).\]

The gradient of $g(\bx)$ is given as follow
\begin{align*}
    \nabla g(\bx) =& \nabla_{\bx}\left(\sum_{S\subseteq V}\left(\sigma(S)\left(\prod_{u\in S}h_u(\bx)\right)\left(\prod_{v\notin S}(1-h_v(\bx))\right)\right)\right) \\
    =& \sum_{S\subseteq V}\sigma(S)\nabla_{\bx}\left(\left(\prod_{u\in S}h_u(\bx)\right)\left(\prod_{v\notin S}(1-h_v(\bx))\right)\right) \\
    =& \sum_{S\subseteq V}\sigma(S)\left(\sum_{u'\in S}\nabla_{\bx} h_{u'}(\bx)\left(\prod_{u\in S,u\neq u'}h_u(\bx)\right)\left(\prod_{v\notin S}(1-h_v(\bx))\right)\right. \\
    &\quad\quad\quad\quad - \left.\sum_{v'\notin S}\nabla_{\bx} h_{v'}(\bx)\left(\prod_{u\in S}h_u(\bx)\right)\left(\prod_{v\notin S,v\neq v'}(1-h_v(\bx))\right) \right).
\end{align*}

We then rearrange the gradient term, we have
\begin{align*}
    \nabla g(\bx)
    =& \sum_{u'\in V}\left[\sum_{S:u'\in S}\sigma(S)\left(\prod_{u\in S,u\neq u'}h_u(\bx)\right)\left(\prod_{v\notin S}(1-h_v(\bx))\right)\cdot\nabla h_{u'}(\bx)\right. \\
    &\quad\quad\quad\quad - \left.\sum_{T:u'\notin T}\sigma(T)\left(\prod_{u\in T}h_u(\bx)\right)\left(\prod_{v\notin T,v\neq u'}(1-h_v(\bx))\right)\cdot\nabla h_{u'}(\bx)\right] \\
    =& \sum_{u'\in V}f_{u'}(\bx) \nabla h_{u'}(\bx),
\end{align*}
where $f_{u'}(\bx)$ is defined as
\begin{align}
\label{eq:fuprime}
    f_{u'}(\bx) :=
    \sum_{S:u'\notin S}\left(\sigma(S\cup\{u'\})-\sigma(S)\right)\left(\prod_{u\in S}h_u(\bx)\right)\left(\prod_{v\notin S \cup \{u'\}}(1-h_v(\bx))\right).
\end{align}
\end{proof}

 Lemma~\ref{lem:gradientOrg} provides the exact gradient formula for $g(\bx)$, but
 	it involves an exponentially large number of summation terms and cannot be efficiently
 	computed.
Instead, we use Lemma~\ref{lem:gradientOrg} to define a simple stochastic gradient estimator 
	$\widehat{\nabla g}(\bx)$ as the unbiased estimator of $\nabla g(\bx)$.
\begin{definition}[Stochastic Gradient Estimator]
\label{def:stochastic_estimator}
For any vector $x \in \cD$,  we construct a stochastic gradient estimator 
	$\widehat{\nabla g}(\bx)$ as follows:
First, we same a node $u'\in V$ uniformly at random from $V$.
Then, we sample a subset $S \subseteq V \setminus \{u'\}$ 
	according to $h_u(\bx)$ for $u\in V\setminus \{u'\}$, more specifically, 
	for each $u\in V\setminus \{u'\}$, we include $u$ in $S$ with probability $h_u(\bx)$
	and exclude $u$ from $S$ with probability $1-h_u(\bx)$, and different $u$'s are sampled
	independently.
Next we sample a live-edge graph $L$ based on the triggering model, and compute the
	marginal gain of $u'$ on $S$ in graph $L$, denoted $\sigma_L(u'|S)$, which is
	the number of nodes in $L$ that can be reached from $u'$ but not from $S$.
Finally, we set $\widehat{\nabla g}(\bx) = n \cdot \sigma_L(u'|S) \nabla h_{u'}(\bx)$.
\end{definition}

It is straightforward to see that  $\widehat{\nabla g}(\bx)$ is an unbiased estimator
	of $\nabla g(\bx)$, i.e. $\E[\widehat{\nabla g}(\bx)] = \nabla g(\bx)$.

The next lemma provides the bounds on the smoothness of $g(\bx)$ and the variance of
	its stochastic gradient estimator defined above.
\begin{lemma} \label{lem:boundOriginal}
Assuming that the function $h_v(\bx)$ is $L_h$-Lipschitz and $\beta_h$-smooth, then we have the following bound for the smoothness constant $\beta_g$ of $g(\bx)$ and the variance of stochastic gradient estimator
	$\text{Var} = \E[||\widehat{\nabla g}(\bx)-\nabla g(\bx)||_2^2]$.
\begin{align}
    \beta_g \le& \beta_h n^2+2L_h^2 n^3,\\
    \text{Var} \le& 4L_h^2n^4.
\end{align}
\end{lemma}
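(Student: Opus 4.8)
The plan is to prove the two inequalities separately, in both cases working from the exact gradient formula $\nabla g(\bx) = \sum_{u'\in V} f_{u'}(\bx)\,\nabla h_{u'}(\bx)$ of Lemma~\ref{lem:gradientOrg}, with $f_{u'}$ as in~\eqref{eq:fuprime}. For the smoothness bound I would estimate $\|\nabla g(\bx)-\nabla g(\by)\|_2$ term-by-term over $u'$, splitting each summand as $f_{u'}(\bx)\nabla h_{u'}(\bx) - f_{u'}(\by)\nabla h_{u'}(\by) = f_{u'}(\bx)(\nabla h_{u'}(\bx)-\nabla h_{u'}(\by)) + (f_{u'}(\bx)-f_{u'}(\by))\nabla h_{u'}(\by)$, exactly the two-term decomposition already used in the proof of Lemma~\ref{lem-lipschitz-smoothness-general}. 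The first term is immediately controlled by $\beta_h$-smoothness once I observe that $|f_{u'}(\bx)|\le n$: the coefficients $\prod_{u\in S}h_u(\bx)\prod_{v\notin S\cup\{u'\}}(1-h_v(\bx))$ form a probability distribution over subsets $S\subseteq V\setminus\{u'\}$, and each marginal gain $\sigma(S\cup\{u'\})-\sigma(S)$ lies in $[0,n]$. Summing $|f_{u'}(\bx)|\cdot\beta_h\|\bx-\by\|_2$ over the $n$ nodes $u'$ yields the $\beta_h n^2$ contribution.

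The main work is the second term, namely bounding $|f_{u'}(\bx)-f_{u'}(\by)|$. Here I would interpret $f_{u'}(\bx)=\E_{S\sim P_{\bx}}[m(S)]$, where $m(S)=\sigma(S\cup\{u'\})-\sigma(S)\in[0,n]$ and $P_{\bx}$ is the product Bernoulli measure on $V\setminus\{u'\}$ whose marginals are the $h_u(\bx)$. Then $|f_{u'}(\bx)-f_{u'}(\by)|\le n\sum_{S}|P_{\bx}(S)-P_{\by}(S)|$, and the total $L_1$ distance between the two product measures is bounded by the sum of the per-coordinate $L_1$ distances, $\sum_{u\neq u'}2|h_u(\bx)-h_u(\by)|$, through a coordinate-by-coordinate hybrid argument identical in spirit to the telescoping estimate already carried out for $\prod(1-h_v)$ in Lemma~\ref{lem-lipschitz-smoothness-general}. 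Invoking $L_h$-Lipschitzness of each $h_u$ gives $|f_{u'}(\bx)-f_{u'}(\by)|\le 2n^2 L_h\|\bx-\by\|_2$; combining with $\|\nabla h_{u'}(\by)\|_2\le L_h$ and summing over the $n$ choices of $u'$ produces the $2L_h^2 n^3$ contribution, so that $\beta_g\le \beta_h n^2 + 2L_h^2 n^3$.

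For the variance bound I would avoid touching the exponential sum in $\nabla g$ directly and instead use the triangle inequality $\|\widehat{\nabla g}(\bx)-\nabla g(\bx)\|_2 \le \|\widehat{\nabla g}(\bx)\|_2 + \|\nabla g(\bx)\|_2$, which holds for every realization of the sampling. Each term is at most $n^2 L_h$: for the estimator, $\|\widehat{\nabla g}(\bx)\|_2 = n\,\sigma_L(u'|S)\,\|\nabla h_{u'}(\bx)\|_2 \le n\cdot n\cdot L_h$, since the marginal gain $\sigma_L(u'|S)$ cannot exceed $n$ and $h_{u'}$ is $L_h$-Lipschitz; for the true gradient, $\|\nabla g(\bx)\|_2\le n^2 L_h$ is exactly the Lipschitz bound of Lemma~\ref{lem:generalGradient}. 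Squaring the pointwise bound $\|\widehat{\nabla g}(\bx)-\nabla g(\bx)\|_2\le 2n^2 L_h$ and taking expectation gives $\text{Var}=\E\|\widehat{\nabla g}(\bx)-\nabla g(\bx)\|_2^2\le (2n^2 L_h)^2 = 4L_h^2 n^4$.

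I expect the $f_{u'}$-difference estimate to be the only delicate step: one must verify cleanly that the coefficient weights constitute genuine product Bernoulli measures and that the marginal-gain factors stay within $[0,n]$, so that the hybrid (total-variation) comparison of the two product measures applies and contributes the extra factor $n$ responsible for the $n^3$ term. Everything else reduces to routine bookkeeping with the Lipschitz and smoothness constants of $h_v$ and the already-established gradient-norm bound for $g$.
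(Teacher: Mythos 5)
Your proposal is correct and follows essentially the same route as the paper's proof: the same splitting of $f_{u'}(\bx)\nabla h_{u'}(\bx)-f_{u'}(\by)\nabla h_{u'}(\by)$ into a $\beta_h$-smoothness term (using $|f_{u'}|\le n$) and an $|f_{u'}(\bx)-f_{u'}(\by)|$ term, the same coordinate-by-coordinate telescoping with per-coordinate constant $2n$, and the identical pointwise bound $\|\widehat{\nabla g}(\bx)\|_2,\|\nabla g(\bx)\|_2\le n^2L_h$ for the variance. Your total-variation phrasing of the product-measure comparison is just a cleaner repackaging of the paper's explicit manipulation of $r_{u'}$, and yields the same constants.
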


\begin{proof}
In this proof, we use the same $f_{u'}(\bx)$ definition as given in
	Eq.~\eqref{eq:fuprime}.
we have
\begin{align*}
    ||\nabla g(\bx) - \nabla g(\by)||_2 =& ||\sum_{u'\in V}f_{u'}(\bx) \nabla h_{u'}(\bx) - \sum_{u'\in V}f_{u'}(\by) \nabla h_{u'}(\by)||_2 \\
    =& ||\sum_{u'\in V}f_{u'}(\bx) \nabla h_{u'}(\bx) - \sum_{u'\in V}f_{u'}(\bx) \nabla h_{u'}(\by)\\
    &\quad + \sum_{u'\in V}f_{u'}(\bx) \nabla h_{u'}(\by) - \sum_{u'\in V}f_{u'}(\by) \nabla h_{u'}(\by)||_2 \\
    \le& ||\sum_{u'\in V}f_{u'}(\bx) \nabla h_{u'}(\bx) - \sum_{u'\in V}f_{u'}(\bx) \nabla h_{u'}(\by)||_2\\
    &\quad + ||\sum_{u'\in V}f_{u'}(\bx) \nabla h_{u'}(\by) - \sum_{u'\in V}f_{u'}(\by) \nabla h_{u'}(\by)||_2 \\
    \le& \sum_{u'\in V}|f_{u'}(\bx)|\cdot ||\nabla h_{u'}(\bx) - \nabla h_{u'}(\by)||_2\\
    &\quad + \sum_{u'\in V}|f_{u'}(\bx) - f_{u'}(\by)|\cdot ||\nabla h_{u'}(\by)||_2 \\
    \le & \sum_{u'\in V}|f_{u'}(\bx)|\cdot \beta_h ||\bx-\by||_2 + \sum_{u'\in V}|f_{u'}(\bx) - f_{u'}(\by)|\cdot L_h,
\end{align*}
where the last inequality comes from the assumptions we made before.

Then, it is easy to see that
\begin{align*}
    f_{u'}(\bx) \le 
\sum_{S:u'\notin S} n \cdot \left(\prod_{u\in S}h_u(\bx)\right)\left(\prod_{v\notin S \cup \{u'\}}(1-h_v(\bx))\right) = n.
\end{align*}
Then we bound $|f_{u'}(\bx) - f_{u'}(\by)|$. First, we define
\[
   r_{u'}(\bx) = 
\sum_{S:u'\notin S}\left(\sigma(S\cup\{u'\})-\sigma(S)\right)\left(\prod_{u\in S}x_u\right)\left(\prod_{v\notin S \cup \{u'\}}(1-x_v)\right).
\]
%
where the input for $r_{u'}(\cdot)$ is defined as $\bx = (x_v)_{v\in V} \in \mathbb R^n$. Then we show that
\[|r_{u'}(\bx) - r_{u'}(\by)| \le 4n\sum_{v\in V}|x_v - y_v|.\]
We first show that 
\[|r_{u'}(\bx_{-w},x_w) - r_{u'}(\bx_{-w},x'_w)| \le 4n|x_w - x'_w|.\]
First, if $w = u'$, then it is obvious that $r_{u'}(x_{-w},x_w) - r_{u'}(x_{-w},x'_w) = 0$, since the variable corresponding to $w$ does not appear in the formula. Then assume that $u' \neq w$, we can get the following equation by rearranging the terms.

\begin{align*}
    &r_{u'}(\bx_{-w},x_w)\\
    =& x_w\sum_{S: u'\notin S, w\in S} \left(\sigma\left(S\cup \{u'\}\right)-\sigma(S)\right)\prod_{u\in S, u\neq w}x_u\prod_{v\notin S, v\neq u'}(1-x_v)\\
    &\quad +(1-x_w)\sum_{S:u',w\notin S}\left(\sigma(S\cup \{u'\})-\sigma(S)\right)\prod_{u\in S}x_u\prod_{v\notin S, v\neq u',w}(1-x_v)\\
\end{align*}
Then we have the following inequalities
\begin{align*}
   &|r_u'(\bx_{-w},x_w)-r_u'(\bx_{-w},x'_w)|\\
   =&\Bigg|(x_w-x'_w)\sum_{S: u'\notin S, w\in S} \left(\sigma\left(S\cup \{u'\}\right)-\sigma(S)\right)\prod_{u\in S, u\neq w}x_u\prod_{v\notin S, v\neq u'}(1-x_v)\\
   &\quad + (x'_w-x_w)\sum_{S:u',w\notin S}\left(\sigma(S\cup \{u'\})-\sigma(S)\right)\prod_{u\in S}x_u\prod_{v\notin S, v\neq u',w}(1-x_v)\Bigg|\\
   \le& 2n |x_w-x'_w|\\
\end{align*}
The last inequality is due to the two sum terms are the expectations of marginal influence. The marginal influence is smaller than $n$ due to submodularity.
 
Then given two vectors $\bx,\by\in \mathbb R^n$, we define $\bz_0 = x$, and $\bz_i = (y_1,\dots, y_i,x_{i+1},\dots,x_n)$ for $i=1,2,\dots,n-1$ and $\bz_n = \by$. Then we have

\begin{align*}
    |r_{u'}(\bx) - r_{u'}(\by)| =& |\sum_{i=0}^{n-1}(r_{u'}(\bz_i) - r_{u'}(\bz_{i+1})| \\
    \le& \sum_{i=0}^{n-1}|(r_{u'}(\bz_i) - r_{u'}(\bz_{i+1})| \\
    \le& \sum_{i=0}^{n-1}2n|x_{i+1} - y_{i+1}| \\
    =& 2n\sum_{v\in V}|x_v - y_v|.
\end{align*}

Then, let $h(\bx) := (h_u(\bx))_{u\in V}$ to be the vector for the activation probabilities, then we have $f_{u'}(\bx) = r_{u'}(h(\bx))$, and we have
\begin{align*}
    |f_{u'}(\bx) - f_{u'}(\by)| =& |r_{u'}(h(\bx)) - r_{u'}(h(\by))| \\
    \le& 2n\sum_{v\in V}|h_v(\bx) - h_v(\by)| \\
    \le& 2n\sum_{v\in V}L_h||\bx-\by||_2 \\
    =& 2L_hn^2\cdot||\bx-\by||_2,
\end{align*}
where the last inequality comes from the assumption that $h_v(\bx)$ is $L_h$-Lipschitz. Plug in all the terms, we have shown that

\begin{align*}
    ||\nabla g(\bx) - \nabla g(\by)||_2 \le & \sum_{u'\in V}|f_{u'}(\bx)|\cdot \beta_h ||\bx-\by||_2 + \sum_{u'\in V}|f_{u'}(\bx) - f_{u'}(\by)|\cdot L_h \\
    \le& \sum_{u'\in V}n\cdot \beta_h ||\bx-\by||_2 + \sum_{u'\in V} L_h\cdot wL_hn^2\cdot||\bx-\by||_2 \\
    =& \left(\beta_h n^2 + 2n^3L_h^2\right)||\bx-\by||_2.
\end{align*}

As shown above, the original function $g$ is $(\beta_h n^2+2n^3L_h^2)$-smooth. 

Next we bound $\E[||\widehat{\nabla g}(\bx)-\nabla g(\bx)||_2^2]$.
First, it is easy to show that $|f_{u'}(\bx)|$ is bounded by $n$, so we have
\[||\widehat{\nabla g}(\bx)||_2 \le \sum_{u'}|f_{u'}(\bx)|\cdot ||\nabla h_{u'}(\bx)||_2 \le n^2 L_h.\]
Then similar to the argument above, we also have
\[||\nabla g(\bx)||_2 \le  n^2 L_h.\]
Then
\[\E[||\widehat{\nabla g}(\bx)-\nabla g(\bx)||_2^2] \le \E[(2 n^2 L_h)^2] = 4L_h^2n^4 = O(n^4).\]
\end{proof}

The following corollary summarizes our stochastic gradient algorithm for the original
	objective function, which will be used in the experiment section.
It is a direct consequence of Theorem~\ref{thm-proximal-grad} and Lemma~\ref{lem:boundOriginal},
	and the proof is omitted.
\begin{corollary}
Assuming that the function $h_v(\bx)$ is $L_h$-Lipschitz and $\beta_h$-smooth. By choosing the unbiased stochastic gradient estimator shown in Definition~\ref{def:stochastic_estimator}, we run stochastic gradient algorithm which comes from \cite{HassaniSK17}. The algorithm runs $T$ rounds with gradient step size 
$\mu_t=\frac{1}{\beta_h n^2+2L_h^2 n^3+\frac{2\sqrt{2}L_h n^2}{\Delta}\sqrt{t}}$, 
where $\Delta = sup_{\bx, \by\in \cP }||\bx-\by||_2$. 
Then
\[\E\left[\max_{t=0,1,2,\dots,T} g(\bx_{t}) \right]\ge \frac{\text{OPT}}{2}-\left(\frac{\Delta^2(\beta_h n^2+2L_h^2 n^3)}{4T}+\frac{\sqrt{2} \Delta L_h n^2}{\sqrt{T}}\right).\]
\end{corollary}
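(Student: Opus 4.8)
The plan is to recognize this corollary as the specialization of Theorem~\ref{thm-proximal-grad} to the pair $f_1 = g$ and $f_2 \equiv 0$. With $f_2 \equiv 0$ the proximal operator collapses to the Euclidean projection, $\text{prox}_{-\eta\cdot 0}(\bx) = \argmin_{\by\in\cP}\tfrac{1}{2}\|\bx-\by\|_2^2 = \Pi_{\cP}(\bx)$, so the proximal-gradient recursion of Eq.~\eqref{equ-proximal-gradient} becomes $\bx^{(t+1)} = \Pi_{\cP}(\bx^{(t)} + \eta_t\bv^{(t)})$, i.e., projected stochastic gradient ascent, which is exactly the scheme of \cite{HassaniSK17}. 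Thus no new convergence analysis is needed; it only remains to check the hypotheses of Theorem~\ref{thm-proximal-grad} and to substitute the constants supplied by Lemma~\ref{lem:boundOriginal}.

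First I would verify the preconditions. The constant function $f_2 \equiv 0$ is trivially non-negative and concave. For $f_1 = g$: monotonicity and DR-submodularity hold under the standing assumption on the $h_v$'s (Lemma~\ref{lem-drsubmodular}), non-negativity is immediate since $g$ is an expected count of activated nodes, and Lemma~\ref{lem:boundOriginal} gives $\beta$-smoothness with $\beta = \beta_g \le \beta_h n^2 + 2L_h^2 n^3$. The estimator $\widehat{\nabla g}(\bx)$ of Definition~\ref{def:stochastic_estimator} is unbiased, and Lemma~\ref{lem:boundOriginal} bounds its variance by $\text{Var} = \E\|\widehat{\nabla g}(\bx)-\nabla g(\bx)\|_2^2 \le 4L_h^2 n^4$; hence we may take $\sigma^2 = 4L_h^2 n^4$, i.e., $\sigma = 2L_h n^2$, in Theorem~\ref{thm-proximal-grad}.

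Next I would plug these constants in. Setting $\beta = \beta_h n^2 + 2L_h^2 n^3$ and $\sigma = 2L_h n^2$, the prescribed step size $\eta = 1/(\beta + \tfrac{\sigma}{\Delta}\sqrt{2T})$ becomes $\eta = 1/(\beta_h n^2 + 2L_h^2 n^3 + \tfrac{2\sqrt{2}L_h n^2}{\Delta}\sqrt{T})$, which is exactly $\mu_t$ in the statement (the subscript $t$ should read $T$, since this step size is constant across iterations). Because $f_2 \equiv 0$ we have $(f_1+f_2)(\bx^*) = g(\bx^*) = \text{OPT}$, and the two error terms of Theorem~\ref{thm-proximal-grad} evaluate to $\tfrac{\beta\Delta^2}{4T} = \tfrac{(\beta_h n^2 + 2L_h^2 n^3)\Delta^2}{4T}$ and $\tfrac{\sigma\Delta}{\sqrt{2T}} = \tfrac{2L_h n^2\Delta}{\sqrt{2T}} = \tfrac{\sqrt{2}L_h n^2\Delta}{\sqrt{T}}$. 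Collecting these reproduces the claimed inequality verbatim.

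The only points requiring care — and the closest thing to an obstacle here — are the bookkeeping of constants (in particular tracking the $\sqrt{2}$ factors so that $\tfrac{\sigma}{\Delta}\sqrt{2T}$ matches the $\tfrac{2\sqrt{2}L_h n^2}{\Delta}\sqrt{T}$ in the step size and $\tfrac{\sigma\Delta}{\sqrt{2T}}$ matches the second error term) and the observation that dropping $f_2$ turns the proximal step into a plain projection, so that Theorem~\ref{thm-proximal-grad} applies directly to the projected-gradient method of \cite{HassaniSK17}. There is no genuine analytic difficulty; as the surrounding text asserts, the result is a direct corollary of Theorem~\ref{thm-proximal-grad} and Lemma~\ref{lem:boundOriginal}.
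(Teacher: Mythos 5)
Your proposal is correct and follows exactly the route the paper intends: the paper states the corollary is a direct consequence of Theorem~\ref{thm-proximal-grad} and Lemma~\ref{lem:boundOriginal} and omits the proof, and you supply precisely that instantiation ($f_1=g$, $f_2\equiv 0$, $\beta=\beta_h n^2+2L_h^2n^3$, $\sigma=2L_hn^2$), with the constants checking out. Your observation that the $\sqrt{t}$ in the stated step size should be $\sqrt{T}$ (the theorem uses a constant step size) is a fair catch of a typo rather than a gap.
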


The above theorem shows that with the number of iterations 
$T=O\left(\frac{d^2(\beta_h n^2+L_h^2 n^3)}{\varepsilon}+\frac{dL_h n^2}{\varepsilon^2}\right)$, 
we obtain a solution whose objective value is at least $\left(\frac{\text{OPT}}{2}-\varepsilon\right)$.

}

\end{document}